\definecolor{titlecolor}{RGB}{70,0,100}           % Dark purple
\definecolor{sectioncolor}{RGB}{0,90,120}         % Teal blue
\definecolor{theoremcolor}{RGB}{0,100,80}         % Green
\definecolor{definitioncolor}{RGB}{70,30,125}     % Purple
\definecolor{remarkcolor}{RGB}{150,50,0}          % Orange brown
\definecolor{lemmacolor}{RGB}{0,60,110}           % Blue
\definecolor{propositioncolor}{RGB}{120,0,60}     % Burgundy
\definecolor{linkcolor}{RGB}{0,75,125}            % Link blue
\titlespacing*{\section}{0pt}{3.5ex plus 1ex minus .2ex}{2.3ex plus .2ex}
\titlespacing*{\subsection}{0pt}{3.25ex plus 1ex minus .2ex}{1.5ex plus .2ex}
\titlespacing*{\subsubsection}{0pt}{3.25ex plus 1ex minus .2ex}{1.5ex plus .2ex}
\theoremstyle{definition}
\declaretheoremstyle[
    headfont=\normalfont\bfseries\color{theoremcolor},
    bodyfont=\normalfont\itshape,
    mdframed={
        linewidth=1pt,
        rightline=false,
        leftline=true,
        topline=false,
        bottomline=false,
        linecolor=theoremcolor,
    }
]{thmstyle}
\declaretheoremstyle[
    headfont=\normalfont\bfseries\color{definitioncolor},
    bodyfont=\normalfont,
    mdframed={
        linewidth=1pt,
        rightline=false,
        leftline=true,
        topline=false,
        bottomline=false,
        linecolor=definitioncolor,
    }
]{defstyle}
\declaretheoremstyle[
    headfont=\normalfont\bfseries\color{remarkcolor},
    bodyfont=\normalfont,
    mdframed={
        linewidth=1pt,
        rightline=false,
        leftline=true,
        topline=false,
        bottomline=false,
        linecolor=remarkcolor,
    }
]{remstyle}
\declaretheoremstyle[
    headfont=\normalfont\bfseries\color{lemmacolor},
    bodyfont=\normalfont\itshape,
    mdframed={
        linewidth=1pt,
        rightline=false,
        leftline=true,
        topline=false,
        bottomline=false,
        linecolor=lemmacolor,
    }
]{lemstyle}
\declaretheoremstyle[
    headfont=\normalfont\bfseries\color{propositioncolor},
    bodyfont=\normalfont\itshape,
    mdframed={
        linewidth=1pt,
        rightline=false,
        leftline=true,
        topline=false,
        bottomline=false,
        linecolor=propositioncolor,
    }
]{propstyle}
\declaretheorem[style=thmstyle, name=Theorem, numberwithin=section]{theorem}
\declaretheorem[style=lemstyle, name=Lemma, numberwithin=section]{lemma}
\declaretheorem[style=propstyle, name=Proposition, numberwithin=section]{proposition}
\declaretheorem[style=thmstyle, name=Corollary, numberwithin=section]{corollary}
\setlist[itemize]{leftmargin=*, topsep=2pt, itemsep=2pt, parsep=0pt}
\setlist[enumerate]{leftmargin=*, topsep=2pt, itemsep=2pt, parsep=0pt}
\newcommand{\C}{\mathbb C}  
\newcommand{\PP}{\mathbb P} 
\newcommand{\R}{\mathbb R}
\newcommand{\G}{\mathbb G}
\newcommand{\V}{\mathbb V}
\newcommand{\rank}{\mbox{rank}}
\newcommand{\grad}{\mbox{grad}}
\newcommand{\Perm}{\mbox{Perm}}
\newcommand{\im}{\mbox{im}}
\newcommand{\codim}{\mbox{codim}}
\newcommand{\join}{\wedge}
\newcommand{\meet}{\diamond}
\newcommand{\lra}{\longrightarrow}
\newcommand{\al}{\alpha}
\newcommand{\ga}{\gamma}
\newcommand{\ep}{\epsilon}
\newcommand{\si}{\sigma}
\newcommand{\Ga}{\Gamma}
\newcommand{\wtilde}{\widetilde}
\newcommand{\what}{\widehat}
\newcommand{\iso}{\approx}
\renewcommand{\maketitle}{%
  \begingroup
  \begin{center}
    % Title
    {\LARGE\bfseries\color{titlecolor}\@title\par}
    \vspace{1.5em}
    
    % Author
    {\large\@author\par}
    \vspace{0.5em}
    
    % Date
    {\large\@date\par}
  \end{center}
  \vspace{1em}
  \endgroup
}
\begin{document}

\title{Projective Variety Recovery from Unknown Linear Projections} %Recovering a Smooth Complex Projective Variety From Two Projections on Unknown Hyperplanes}

\author{{\bf Yirmeyahu Kaminski}\\
School of Mathematical Sciences,\\
Holon Institute of Technology, Holon, Israel. \\
email: kaminsj@hit.ac.il}

\date{\today}
% \date{}

\maketitle

\begin{abstract}
We study how a smooth irreducible algebraic variety $X$ of dimension $n$ embedded in $\C \PP^{m}$ (with $m \geq n+2$), which degree is $d$, can be recovered using two projections from unknown points onto unknown hyperplanes. The centers and the hyperplanes of projection are unknown: the only inputs are the defining equations of each projected variety. We show how both the projection operators and the variety in $\C \PP^{m}$ can be recovered modulo some action of the group of projective transformations of $\C \PP^{m}$. This configuration generalizes results obtained in the context of curves embedded in $\C \PP^3$ (\cite{Kaminski-04}) and results concerning surfaces embedded in $\C \PP^4$ (\cite{Kaminski-11}).  

We show how in a generic situation, a characteristic matrix of the pair of projections can be recovered. In the process we address dimensional issues and as a result establish a necessary condition, as well as a sufficient condition to compute this characteristic matrix up to a finite-fold ambiguity. These conditions are expressed as minimal values of the degree of the dual variety. 

Then we use this matrix to recover the class of the couple of projections and as a consequence to recover the variety. For a generic situation, two projections define a variety with two irreducible components. One component has degree $d(d-1)$ and the other has degree $d$, being the original variety.  
\end{abstract}

%=====================
%=====================
\section{Introduction}
\label{sec::intro}

%==============================
\subsection{Problem Definition}
\label{sec::setting}

Consider a smooth irreducible algebraic variety $X \in \C\PP^{m}$ (in the sequel we simply write $\PP^m$ for $\C\PP^m$) of dimension $n \leq m-2$. This variety is projected onto two projective hyperplanes $i_1(\PP^{m-1})$ and $i_2(\PP^{m-1})$ embedded in $\PP^{m}$ through the
centers of projection, ${\bf O}_1$ and ${\bf O}_2$. Each projection mapping is denoted by $\pi_j:\PP^m \setminus \{{\bf O}_j\} \lra i_j(\PP^{m-1})$. Let $Y_i = \pi_i(X)$ be the different projections of the variety $X$.

Once bases of $\C^{m+1}$ and $\C^m$ are fixed, a projection $\pi_i$ is presented as an $m \times (m+1)$ matrix, ${\bf M}_i$ defined modulo multiplication by non-zero scalar. Then each point ${\bf
P}$ different from ${\bf O}_i$ is mapped by $\pi_i$ to 
${\bf M}_i {\bf P}$. Each projection operator $\pi_i$, via its matrix, can be regarded as a point in
$\PP^{(m+1)m-1}$.  

When we consider the problem of recovering the projection maps from
the projected varieties, we will show that the recovery is possible only modulo some action of the group of projective transformations of $\PP^m$ on the set of projection maps. To define 
this action we refer to a projection map as a point in
$\PP^{(m+1)m-1}$. Consider the
following projective variety $\V = \PP^{(m+1)m-1} \times \PP^{(m+1)m-1}$. Let $\Pr_m  = \PP GL_{m+1}(\C)$ be the group of
projective transformations of $\PP^m$. We define an action of $\Pr_m$
on $\V$ as follows: 

\begin{equation}
\label{action::varrho}
\varrho: \Pr_m \lra Mor(\V,\V), {\bf A} \mapsto (({\bf
Q}_1,{\bf Q}_2) \mapsto ({\bf M}_1 {\bf A}^{-1},{\bf
M}_2 {\bf A}^{-1})),
\end{equation}
 
where each matrix ${\bf M}_i$ is built from
the coordinates of ${\bf Q}_i = [Q_{i,1},...,Q_{i,(m+1)m}]^T$ as follows:
$$
{\bf M}_i = \left[\begin{array}{cccc}
		Q_{i,1} & Q_{i,2} & \hdots & Q_{i,m+1} \\
		Q_{i,m+2} & Q_{i,n+3} & \hdots & Q_{i,2m+2} \\
		\hdots & & & \\
		Q_{i,(m+1)(m-1)} & \hdots & \hdots & Q_{i,(m+1)m}
	    \end{array}\right]
$$ 
The geometric meaning of this action is that if we change the
projective basis in $\PP^m$, by the transformation ${\bf A}$, we need to change the projection maps in accordance for the projected varieties to be invariant.

For two projection operators $\pi_1$ and $\pi_2$, we define a family of \textit{fundamental matrices} $\{{\bf F}_k\}_{k=2}^{m-1}$ where ${\bf F}_k$ maps $(m-1-k)$-planes in $i_1(\mathbb{P}^{m-1})$ to $(m-k)$-planes in $i_2(\mathbb{P}^{m-1})$. 

We show that the knowledge of any of these fundamental matrices is equivalent to the recovery of the projections $\pi_1$ and $\pi_2$ modulo the action $\varrho$. This is done in section~\ref{sec::fund_mat} and in particular in theorem~\ref{thm::FundEquiv}. 

Given the projected varieties $Y_1$ and $Y_2$ as the only data, we show that the fundamental matrix ${\bf F}_2$ must satisfy a system of polynomial equations related to an equation involving the dual varieties $Y_1^*$ and $Y_2^*$, that we call \textit{the generalized Kruppa equation} for historical reasons that will be presented in the next section. This is done in theorem~\ref{theoKrupeq}. 

Then in section~\ref{sec::dimensionAnalysis}, we analyze the dimension of the algebraic variety defined by this system of equations. In particular we give a necessary condition for this variety to be zero-dimensional. In addition, a sufficient condition is provided. The proof is divided into two cases: (i) $\dim(X) \geq \codim(X)-1$ and (ii) $\dim(X) < \codim(X) - 1$. The section ends with the first main theorem numerated~\ref{thm::main_thm_1} that we state and comment here:
\begin{quote}
For a generic position of the centers of projection, the generalized Kruppa
equation defines the epipolar geometry (that is the fundamental matrices) up to a finite-fold ambiguity
provided the class $c$ of the irreducible smooth projective variety $X$ satisfies $c \geq \frac{1}{2}(m+2)(m+1)$ and $\deg(X) > 2$. 
\end{quote}

Throughout the paper, the class of the variety $X$ is the degree of the dual hypersurface in the dual space, which is also the degree of the image of the Gauss map through the Pl\"ucker embedding, as shown in proposition~\ref{prop::same_degrees}. 

Finally in section~\ref{sec::recovering_the_variety}, the second main theorem numerated~\ref{thm::reconstruction} is introduced and proven. Let us state and explain it here:
\begin{quote}
For a generic position of the centers of projection, namely when no epipolar $(m-n)-$plane is tangent twice to the variety $X$, the variety defined by $\Delta_1 \cap \Delta_2$ has two irreducible components. One has degree $d$ and is the actual solution of the reconstruction. The other one has degree $d(d-1)$. 
\end{quote}

An epipolar geometric entity is defined as one that contains the centers of projection. In the intersection $\Delta_1 \cap \Delta_2$, each $\Delta_i$ is the cone defined by the center of projection ${\bf O}_i$ and the projected variety $Y_i$, provided the matrix of projection $\pi_i$ is known.

%=============================================
%=============================================
\subsection{A Short Historical Perspective}
\label{sec::history}

In this section, we shortly review the history of the recovery modulo the projective group of an algebraic variety from unknown projections. This allows to present some context that enriches this introduction and provides a better understanding of the developments that will be done in the sequel. The question encompasses two sub-problems: 

\begin{enumerate}
	\item The embedding problem which consists in recovering the projections,
	\item The reconstruction problem which deals with the recovery of the variety itself, once the projections are known. 
\end{enumerate}

It should be noted that since we are looking for a recovery modulo the projective group, the blow-up of varieties (see~\cite{Hartshorne-77,Harris-92} for details) is not suitable for this purpose since it only provides a variety which is bi-rationally isomorphic to the original one.  

Historically, the attention was first focused on the second part. Apparently, the first to have considered the question is Monge in~\cite{Monge-1799}, where he tackles the reconstruction problem from two known orthogonal projections on planes that are mutually perpendicular. 

Later the work of Poncelet~\cite{Poncelet-1822} on projective geometry showed what kind of properties are invariant through projections and can be recovered. Poncelet makes extensive use of the cross-ratio as a fundamental projective invariant. In addition, he introduced the duality principle, which is fundamental to the question. 

The works of Pl\"ucker~\cite{Plucker-1830} and Grassmann~\cite{Grassmann-1862} allowed to formulate in great generality the algebraic relations between linear subspaces, which stand at the core of this work. More precisely, we will build on~\cite{Barnabei-Brini-Rota-84} which fulfills the vision of Grassmann.   

Let alone the extraordinary development of algebraic geometry in the nineteenth and twentieth centuries, let us focus on what is directly relevant to the present work. On the applied side of the subject, the development of photogrammetry and later of computer vision brought formidable results, especially about the embedding problem. In this context the original work by Kruppa~\cite{Kruppa-1913} and the more modern contributions, which are exposed in~\cite{Faugeras-Luong-01} and~\cite{MVG-03}, serve as basic conceptual elements for this present work. Among other things, we generalize the so-called Kruppa's equation and the concept of fundamental matrix, to achieve the results presented in the introduction. 

Given two projections $\pi_1$ and $\pi_2$ from $\PP^3$, defined outside the centers of projection ${\bf O}_1$ and ${\bf O}_2$, onto embedded planes $i_1(\PP^2), i_2(\PP^2)$, one defines the fundamental matrix ${\bf F}$ as the matrix that represents $\pi_2(\pi_1^{-1}({\bf p}))$ for ${\bf p}$ in the first plane $i_1(\PP^2)$. All lines $(\pi_2(\pi_1^{-1}({\bf p})))_{p \in i_1(\PP^2)}$ contain the projection ${\bf e}_2$ of ${\bf O}_1$ on $i_2(\PP^2)$. Therefore this matrix represents a mapping from the projective plane $i_1(\PP^2)$ onto the pencil of lines through ${\bf e}_2$, which can be viewed as a line in the dual plane of $i_2(\PP^2)$. The kernel of ${\bf F}$ defines the first epipole ${\bf e}_1$, which is $\pi_1({\bf O}_2)$. Given a conic in $\PP^3$, it is projected onto two conics ${\bf C}_1$ and ${\bf C}_2$ through respectively $\pi_1$ and $\pi_2$. Then according to~\cite{MVG-03}, the Kruppa's equation is given by:
$$
[{\bf e}_2]_x {\bf C}_2^* [{\bf e}_2]_x = {\bf F} {\bf C}_1^* {\bf F}^T, 
$$
where ${\bf C}_1^*$ and ${\bf C}_2^*$ are the matrices of the dual conics and $[{\bf a}]_x$ is the matrix of the vector product by ${\bf a}$ is the standard basis of $\R^3$. This equation expresses the fact that the two degenerated conics defined respectively as the union of the tangents to ${\bf C}_1$ (respectively ${\bf C}_2$) containing ${\bf e}_1$ (respectively ${\bf e}_2$) are projectively equivalent.

It should be noted however than the computer vision framework is limited to projections from $\PP^3$ to $\PP^2$ and the reconstruction of configurations of points, lines and sometimes of conics. 

The use of algebraic geometry appears to be natural when dealing with many projections and generalization to projections in higher dimensions. In~\cite{Kileel-Kohn-25}, the authors present a survey of these works. 

However to our best knowledge the works on the more theoretical question of recovering both the projections and the variety itself, for varieties of higher degree and dimension, are very sparse. In~\cite{Kaminski-04}, the authors shows how these questions can be answered from unknown projections of a smooth algebraic curve in $\PP^3$ on planes. This work introduces a generalization of Kruppa's equation to algebraic curves of arbitrary degree, but still embedded in $\PP^3$. From this equation, it is shown under which assumptions the projections can be recovered up to a finite fold ambiguity modulo the projective group of $\PP^3$. Also the reconstruction question is tackled. It is then shown that under some genericity assumption, the original curve can be recovered, provided its degree is greater than $2$.

In~\cite{Kaminski-11}, for surfaces in $\PP^4$ projected onto hyperplanes, is is shown how discriminant curves can be used to constrain and determine the missing projection geometry and recovering the surface itself. 

The present work is a generalization of these works, but follows the flow of~\cite{Kaminski-04}. However following the structure of~\cite{Kaminski-04} is possible up to some extend only, since tackling these questions for varieties of higher dimension embedded in projective spaces of higher dimension makes things significantly more complicated. 

The concept of fundamental matrix is now replaced by a sequence of fundamental matrices that are related each others. This is done in section~\ref{sec::fund_mat}. The fundamental matrix of order $k$ maps linear subspaces of $i_1(\PP^{m-1})$ of codimension $k$, not containing ${\bf e}_1$ onto linear subspaces of dimension $m-k$ in $i_2(\PP^{m-1})$ and containing ${\bf e}_2$. The knowledge of any of these fundamental matrices is equivalent to the knowledge of all. From the fundamental matrix of order $m-1$, the projections $\pi_1$ and $\pi_2$ can be recovered modulo the projective group $\Pr_m$.  

Then the Kruppa's equation can be generalized to higher dimensional varieties is a natural way, as done in section~\ref{sec::variety_of_second_order_F}. The dimensional analysis which consists in finding both a necessary and a sufficient condition for the recovery of the second order fundamental matrix up to a finite fold ambiguity is carried out in section~\ref{sec::dimensionAnalysis}. We do not exhibit a single condition which is both necessary and sufficient, which leaves room for further research in this context. The necessary condition, while technically more difficult than in~\cite{Kaminski-04} unfolds relatively naturally. However for the sufficient condition, the proof has to be split into two cases: (i) $\dim(X) \geq \codim(X) - 1$ and (ii) $\dim(X) < \codim(X) - 1$, where $X$ is the unknown variety in $\PP^m$. The first case has analogies with the case of curves, presented in~\cite{Kaminski-04}. Here an explicit lower bound for the class $X$ is given, while in~\cite{Kaminski-04} the statement remains vague. The second case is completely new and has no parallel statement in~\cite{Kaminski-04}. 

The question of recovering of the variety itself, once the projections are computed, leads to a similar theorem than in~\cite{Kaminski-04}. However in the course of the proof, there is no natural branched coverings of $\PP^1$ available, so one has to build branch coverings of $\PP^n$ (with $n = \dim(X)$) and prove that the branch locus is actually an hypersurface of $\PP^n$. 

Altogether, the present work presents much more general theorems than~\cite{Kaminski-04} and~\cite{Kaminski-11}.   

%=============================================
%=============================================
\section{Basic Concepts and Preliminary Results}
\label{sec::basics}

Many computations throughout the paper are performed in projective spaces. In order to simplify notations, we may write $a \sim b$ when there exists a non-zero complex number $\lambda$ such that $a = \lambda b$.

%===========================================
\subsection{Double Grassmann-Cayley Algebra}
\label{sec::GC-algebra}
In this section, let $E$ be a linear space of dimension $n$ over a field $K$, which is $\C$ throughout the paper, but which can be chosen more generally in this section. The dual space will be denoted $E^*$. The projectivization $\PP(E)$ is merely the space of lines through the origin in $E$. 

Throughout the paper, the exterior product will be called the \textit{join} operator and will be denoted classically $\join$. We shall also introduce another operator, called the \textit{meet} that will be denoted $\meet$. This double algebra structure $(\join E, \join, \meet)$ has been introduced in~\cite{Barnabei-Brini-Rota-84} as a fulfillment of Grassmann's vision.  

Within $\join^k E$, the decomposable elements $v_1 \join \cdots \join v_k$ are called \textit{extensors of step $k$}. Recall that $\dim(\join^k E) = \binom{n}{k}$. 

Since we shall use these operators in a projective setting, we recall the following two statements that clarify the geometric aspects of the two operators, extracted from~\cite{Faugeras-Luong-01}, which contains a shortened version of~\cite{Barnabei-Brini-Rota-84}. 

\begin{proposition}
Let $\mathbf{a}$ and $\mathbf{b}$ be two extensors, and let $A$ and $B$ be the corresponding projective subspaces in $\PP(E)$. Then
\begin{enumerate}
\item $\mathbf{a} \join \mathbf{b} = \mathbf{0}$ if and only if $A \cap B \neq \emptyset$.
\item If $A \cap B = \emptyset$ then the extensor $\mathbf{a} \join \mathbf{b}$ is associated with the projective subspace generated by $A \cup B$. 
\end{enumerate}

\end{proposition}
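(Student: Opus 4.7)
The plan rests on a single underlying fact about decomposable multivectors: an element $\mathbf{a} = a_1 \join \cdots \join a_p \in \join^p E$ is nonzero if and only if the vectors $a_1,\ldots,a_p$ are linearly independent, and in that case $\mathbf{a}$ depends only on the linear subspace $\hat{A} = \mathrm{span}(a_1,\ldots,a_p) \subset E$ up to a nonzero scalar, the scalar of proportionality being the determinant of the change of basis. Thus the extensor faithfully encodes the underlying linear subspace, which in turn corresponds to the projective subspace $A$.

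For the ``if'' direction of item (1), I would pick any nonzero $v \in \hat{A} \cap \hat{B}$, which is guaranteed by the nonemptiness of $A \cap B$, and extend $v$ to bases of $\hat{A}$ and of $\hat{B}$. Using the basis-change invariance just recalled, I may then write $\mathbf{a} \sim v \join \tilde{\mathbf{a}}$ and $\mathbf{b} \sim v \join \tilde{\mathbf{b}}$, so that the product $\mathbf{a} \join \mathbf{b}$ acquires the factor $v \join v = 0$, giving the claimed vanishing. For the converse, the standard criterion says that $a_1 \join \cdots \join a_p \join b_1 \join \cdots \join b_q = 0$ if and only if the $p+q$ vectors are linearly dependent in $E$; since each of the two tuples is already independent, the dependence forces $\dim(\hat{A} + \hat{B}) < p + q$, and Grassmann's dimension formula then yields $\dim(\hat{A} \cap \hat{B}) \geq 1$, whence $A \cap B \neq \emptyset$.

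Item (2) comes almost as a byproduct: the disjointness assumption $A \cap B = \emptyset$ translates to $\hat{A} \cap \hat{B} = \{0\}$, so $a_1,\ldots,a_p,b_1,\ldots,b_q$ are linearly independent in $E$ and $\mathbf{a} \join \mathbf{b} \neq 0$. The wedge product is manifestly decomposable of step $p+q$, hence is itself an extensor, and its underlying linear subspace is precisely $\hat{A} + \hat{B}$; projectivizing gives the projective subspace generated by $A \cup B$.

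The main conceptual step that must be dispatched carefully is the basis-change invariance invoked for the ``if'' direction: one must know that the extensor depends only on the subspace, so that a single common vector $v$ can be placed as the first entry in both representations simultaneously. Beyond that, everything reduces to the elementary identity $v \join v = 0$ and the Grassmann dimension formula, both of which are standard.
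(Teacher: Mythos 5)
Your proof is correct and complete. Note, however, that the paper itself gives no proof of this proposition: it is quoted verbatim from Faugeras--Luong (itself a condensation of Barnabei--Brini--Rota), so there is no in-paper argument to compare against. Your argument is the standard one --- reduce everything to the criterion that a wedge of vectors vanishes iff the vectors are dependent, use basis-change invariance of extensors to insert a common vector $v$ and get $v \join v = 0$ for the ``if'' direction, and use the Grassmann dimension formula for the converse --- and each step is sound, including the observation for item (2) that the wedge of the two decomposable elements is itself decomposable with underlying subspace $\hat{A} \oplus \hat{B}$. The only implicit convention worth flagging is that $\mathbf{a}$ and $\mathbf{b}$ are taken to be nonzero extensors of positive step, which is what the correspondence with projective subspaces presupposes.
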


\begin{proposition}
Let $\mathbf{a}$ and $\mathbf{b}$ be two extensors of steps $k$ and $l$, respectively, and let $A$ and $B$ be the corresponding projective subspaces in $\PP(E)$. 
\begin{enumerate}
\item If the subspace generated by $A \cup B$ is a proper subspace of $\PP(E)$, then $\mathbf{a} \meet \mathbf{b} = \mathbf{0}$.
\item If the subspace generated by $A \cup B$ is $\PP(E)$, then:
\begin{enumerate}
\item if $A \cap B = \emptyset$, then $k+l = n$ and $\mathbf{a} \meet \mathbf{b} \in \join^0 E = K$,
\item if $A \cap B \neq \emptyset$, then $\mathbf{a} \meet \mathbf{b}$ is the extensor associated with the projective subspace $A \cap B$. 
\end{enumerate}
\end{enumerate}
\end{proposition}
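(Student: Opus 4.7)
The plan is to work from the shuffle-expansion definition of the meet, as developed in~\cite{Barnabei-Brini-Rota-84}. Write $\mathbf{a} = a_1 \join \cdots \join a_k$ and $\mathbf{b} = b_1 \join \cdots \join b_l$, where $(a_i)$ is a basis of the vector subspace $V_A \subset E$ whose projectivization is $A$, and similarly $(b_j)$ for $V_B$. Recall that when $k+l \ge n$,
$$\mathbf{a} \meet \mathbf{b} \;=\; \sum_{\sigma}\mathrm{sgn}(\sigma)\,[a_{\sigma(1)},\ldots,a_{\sigma(n-l)},b_1,\ldots,b_l]\; a_{\sigma(n-l+1)} \join \cdots \join a_{\sigma(k)},$$
where $\sigma$ ranges over $(n-l,\,k+l-n)$-shuffles of $\{1,\ldots,k\}$ and $[\cdot]$ denotes the bracket (determinant) with respect to a fixed volume form on $E$; and $\mathbf{a} \meet \mathbf{b} = \mathbf{0}$ by convention when $k+l < n$.

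For Part 1, if $k+l < n$ there is nothing to prove. Otherwise the hypothesis $\mathrm{span}(A \cup B) \subsetneq \PP(E)$ forces the family $a_1,\ldots,a_k,b_1,\ldots,b_l$ to lie in a vector subspace of $E$ of dimension at most $n-1$, so every $n$-tuple drawn from it is linearly dependent; every bracket in the shuffle sum therefore vanishes, yielding $\mathbf{a} \meet \mathbf{b} = \mathbf{0}$. In Part 2, the assumption $V_A + V_B = E$ together with the Grassmann dimension formula gives $k + l - \dim(V_A \cap V_B) = n$. In case~(a), $A \cap B = \emptyset$ means $V_A \cap V_B = \{0\}$, so $k + l = n$; the shuffle formula then collapses to the single term $[a_1,\ldots,a_k,b_1,\ldots,b_l] \in \join^0 E = K$, which is nonzero precisely because $(a_i,b_j)$ is a basis of $E$.

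The main step, and where care is required, is case~(b). I would set $r = \dim(V_A \cap V_B) = k + l - n \ge 1$, fix a basis $c_1,\ldots,c_r$ of $V_A \cap V_B$, and extend it to bases $c_1,\ldots,c_r,a'_1,\ldots,a'_{k-r}$ of $V_A$ and $c_1,\ldots,c_r,b'_1,\ldots,b'_{l-r}$ of $V_B$; by the dimension count their concatenation is a basis of $E$. Since the extensors $\mathbf{a}$ and $\mathbf{b}$ depend on the chosen generators only up to nonzero scalars, the meet may be computed on these adapted bases. The crucial observation is that in the shuffle sum any term that keeps one of the $c_i$'s inside the bracket slot pairs that $c_i$ with the identical $c_i$ occurring in $\mathbf{b}$, so the bracket has a repeated column and vanishes. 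Only the unique shuffle that sends all of $a'_1,\ldots,a'_{k-r}$ into the bracket survives, giving
$$\mathbf{a} \meet \mathbf{b} \;\sim\; [a'_1,\ldots,a'_{k-r},c_1,\ldots,c_r,b'_1,\ldots,b'_{l-r}]\; c_1 \join \cdots \join c_r,$$
a nonzero scalar multiple of the extensor of $V_A \cap V_B$, i.e.\ of the projective subspace $A \cap B$. The main obstacle is precisely this combinatorial bookkeeping: one must track the sign conventions of the shuffle formula and verify that the repeated-column argument rules out every surviving shuffle but one. Once this is done, all three items of the proposition follow at once.
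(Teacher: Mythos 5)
Your argument is correct and complete. Note, however, that the paper itself offers no proof of this proposition: it is explicitly \emph{recalled} from the cited references (Faugeras--Luong, which condenses Barnabei--Brini--Rota), so there is no in-paper argument to compare against. What you have written is essentially the standard proof from that literature: the shuffle expansion of the meet, the vanishing of all brackets when $\mathrm{span}(A\cup B)$ is proper, the collapse to a single bracket when $V_A\cap V_B=\{0\}$ and $k+l=n$, and --- the only step requiring real care --- the adapted-basis computation in case (b), where every shuffle that places some $c_i$ in the bracket slot produces a repeated entry and dies, leaving the single term $\pm[a'_1,\ldots,a'_{k-r},c_1,\ldots,c_r,b'_1,\ldots,b'_{l-r}]\,c_1\join\cdots\join c_r$ with a nonvanishing bracket by the dimension count $(k-r)+r+(l-r)=n$. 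Your remark that rescaling the chosen spanning vectors only changes $\mathbf{a}$ and $\mathbf{b}$ (hence the meet, by bilinearity) by nonzero scalars correctly justifies computing in the adapted basis, since the conclusion is projective. In short: the proposal supplies a valid, self-contained proof of a statement the paper delegates to its references.
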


The double $K-$algebra structure of the direct sum $\join E = \oplus_{k=0}^n \join^k E$ defined by the join (exterior product) and the meet operators, allows us to define the Hodge operator $*: \join^k \rightarrow \join^{n-k} E$, which yields an isomorphism from $\left ( \join E, \join \right )$ to $\left ( \join E, \meet \right )$, satisfying the following properties:
$$
\begin{array}{rcl}
*(x \join y) & = & (*x) \meet (*y), \\
*(x \meet y) & = & (*x) \join (*y), \\
*(*x) & = & (-1)^{k(n-k)} x, 
\end{array}
$$
for all $x \in \join^k E$ and all $y \in \join^l E$, where $0 \leq k,l \leq n$. 

%It is well known that $\join^{k} E^* = \left ( \join^k E \right)^*$. The Hogde operator sheds a more geometric light on this duality relation.  

The Grassmannian $G(k,n)$ which is the set of $k-$planes in $E = K^n$, can be embedeed into $\PP(\join^k E)$, since to any $k-$plane corresponds univocally a class of extensors in $\join^k E$, which are mutually equal modulo multiplication by a non-zero scalar. Therefore $G(k,n)$ is identified to the class of decomposable elements or extensors in $\join^k E$. With this embedding, $G(k,n)$ becomes a projective variety defined by quadratic equations, called the \textit{Pl\"ucker relations}, which in turn define the homogeneous ideal of $G(k,n)$ (see~\cite{Harris-92}). We shall always consider $G(k,n)$ with this Pl\"ucker embedding.  

In the sequel, the projective version of $G(k,n)$ is considered. It is defined as $\G(k,n) = G(k+1,n+1)$, which is the set of $k-$dimensional projective subspaces of $\PP^n$. The dual space $\PP^{n*}$ is canonically identified with $\G(n-1,n)$. Recall that $\dim(\G(k,n)) = (k+1)(n-k)$.

%Since our computations will occur in $\PP^n$, we fix $[X_0,X_1,\cdots,X_n]^T$, as
%homogeneous coordinates, and $X_0=0$ as the hyperplane at infinity. 

%===========================================
\subsection{Generalization}
\label{sec::mourrain}

Sometimes it is needed to have a way to compute algebraically the intersection or the sum of subspaces even when the conditions for the join and the meet operators to give a valid result are not satisfied. 

With this purpose in mind, we shall make use of two operators that were introduced in~\cite{Mourrain-91} and that generalize those defined in~\cite{Barnabei-Brini-Rota-84} : given $\mathbf{a}$ and $\mathbf{b}$ two extensors that represents two linear spaces of $E$, that is $A$ and $B$, then (i) $A \overline{\meet} B$ is an extensor that represents the intersection $A \cap B$, (ii) $A \overline{\join} B$ is an extensor that represents the sum $A + B$.

%===========================================
\subsection{Projection operators}
\label{sec::projMaps}

Let $\pi$ be a projection operator from $\PP^n$ to an embedded
projective hyperplane $i(\PP^{n-1})$ through a point ${\bf O}$. Given two bases $({\bf a}_1, \cdots, {\bf a}_{n+1})$ and $({\bf b}_1, \cdots, {\bf b}_{n})$ of respectively $\C^{n+1}$ and $\C^{n}$, this projection can be presented by a full rank $n \times (n+1)$ matrix ${\bf M}$, defined modulo multiplication by a non-zero scalar. The matrix ${\bf M}$ can be decomposed as follows:
$$
{\bf M}= \left[\begin{array}{c}
	\Ga_1^T \\
	\Ga_2^T \\
	\cdots \\
	\Ga_{n}^T
\end{array}\right].
$$

Notice that for each row, $\Ga_i^T$ represents an hyperplane of $\PP^n$, that is a point of the dual space $\PP^{n*}$, that contains the center of projection.

There exists a set of simple, but very useful, properties:

\begin{enumerate}
\item The kernel of ${\bf M}$ gives exactly the center of projection.

\item The hyperplane $H = i(\PP^{n-1})$ can be recovered up to  a projective transformation of $\PP^n$ for which the center of projection is a fixed point. Let ${\bf X}$ be a pseudo-inverse of ${\bf M}$. Then hyperplane is defined in the dual space by the kernel of ${\bf X}^T$. Indeed let $V_H$ be the subspace of $\C^{n+1}$ that represents the hyperplane we are looking for. Let ${\bf B}$ be the $(n+1) \times n$ matrix, the columns of which are the basis of $V_H$ used to define ${\bf M}$. Then ${\bf BM}$ acts as identity on $V_H$ and ${\bf B}$ is a pseudo-inverse of ${\bf M}$. Since $V_H = \im(B)$, the coefficients of $H$ as point ${\bf h}$ in the dual space satisfy $\C {\bf h} = V_H^\perp = \ker({\bf B}^T)$. Since given ${\bf M}$ only, the matrix ${\bf B}$ is unknown, choosing another pseudo-inverse ${\bf X}$ leads to an hyperplane which is obtained from $H$ by the linear transformation defined by ${\bf I}_n - {\bf XM}$ that leaves the center of projection invariant. A canonical choice for the hyperplane can be defined by the kernel of the Moore-Penrose pseudo-inverse, in which case, it is the point of the dual space defined by the same coordinates than the center of projection itself.     

\item The transpose of ${\bf M}$ maps a $(n-2)-$plane in $i(\PP^{n-1})$ to the hyperplane it defines with the center of
projection, given as point of the dual space $\PP^{n*}$. This can
easily be deduced by a duality argument or a simple computation. 

\item For $k$, such that $1 \leq k \leq n-1$, there exists a matrix $\what{\bf M}_k$, the entries of which are polynomial functions of the entries of ${\bf M}$, and which maps an extensor in $\G(n-1-k,n-1)$ to the $(n-k)-$plane it generates with the center of projection. This generalizes a similar property for $n=3$ that appears in~\cite{Faugeras-Luong-01}. The existence of $\what{\bf M}_k$ is immediate since given a $(n-1-k)-$plane $W$ in $i(\PP^{n-1})$, the inverse image $\pi^{-1}(W)$ is indeed a $(n-k)-$ plane in $\PP^n$ that contains the center of projection. The matrix $\what{\bf M}_k$ has $\binom{n+1}{n-k+1}$ rows and $\binom{n}{n-k}$ columns and has full rank, that is $\rank(\what{\bf M}_k) = \binom{n}{n-k}$, since it defines obviously an injective operator.

For deriving the explicit expression of $\what{\bf M}_k$, observe that
$$
\what{\bf M}_k ({\bf w}_1 \join \cdots \join {\bf w}_{n-k+1}) = \join_{i=1}^{n-k+1} \what{\bf M}_{n-1} {\bf w}_i.$$ 

Therefore, it is only necessary to derive an explicit expression of the matrix $\what{\bf M}_{n-1}$ that maps a point in $i(\PP^{n-1})$ to the line in $\G(1,n)$ it generates with the center of projection.

\begin{lemma}
The columns of $\what{\bf M}_{n-1}$ are $(-1)^{i+1} \Gamma_1 \meet \cdots \meet \hat{\Gamma}_i \meet \cdots \meet \Gamma_n$, for $1 \leq i \leq n$, where $\hat{\Gamma}_i$ means that $\Gamma_i$ is skipped.
\end{lemma}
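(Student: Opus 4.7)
The plan is to exploit the linearity of $\what{\bf M}_{n-1}$ viewed as a $\binom{n+1}{2} \times n$ matrix. Since it is a linear map from $\C^n$ to $\join^2 \C^{n+1}$, its $i$-th column is precisely $\what{\bf M}_{n-1}({\bf e}_i)$, where $({\bf e}_i)_{1 \leq i \leq n}$ is the standard basis of $\C^n$. By the defining property of $\what{\bf M}_{n-1}$, this column is the extensor representing the line $\pi^{-1}({\bf e}_i) \subset \PP^n$. So the proof splits into two parts: (i) identify the line $\pi^{-1}({\bf e}_i)$ as a meet of hyperplanes, and (ii) fix the overall signs so that the resulting linear map sends every ${\bf w} = \sum_i w_i {\bf e}_i$ to the extensor of $\pi^{-1}({\bf w})$.

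For (i), a point ${\bf P}$ satisfies $\pi({\bf P}) = {\bf e}_i$ iff ${\bf MP} \sim {\bf e}_i$, i.e., $\Gamma_j^T {\bf P} = 0$ for every $j \neq i$. Hence $\pi^{-1}({\bf e}_i) = \bigcap_{j \neq i} \Gamma_j$. The $n$ hyperplanes $\Gamma_j$ all contain the center ${\bf O}$ (since ${\bf MO} = {\bf 0}$) and any $n-1$ of them are generically in general position, so by the second proposition of Section~\ref{sec::GC-algebra} their intersection is the line represented (up to a nonzero scalar) by the extensor $\Gamma_1 \meet \cdots \meet \hat{\Gamma}_i \meet \cdots \meet \Gamma_n \in \join^2 \C^{n+1}$. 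This pins down each column up to a scalar.

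For (ii), I would determine the signs via a Hodge/Laplace argument. The hyperplanes of $\PP^n$ containing $\pi^{-1}({\bf w})$ are precisely those of the form $\sum_i c_i \Gamma_i$ with $\sum_i c_i w_i = 0$, since on the line one has ${\bf MP} \sim {\bf w}$. In the dual picture, the $(n-2)$-plane of $\PP^{n*}$ representing this pencil is obtained by wedging any $n-1$ independent such vectors $\sum_i c_{k,i} \Gamma_i$. Expanding by multilinearity yields a sum $\sum_j \det(C_{\hat{j}}) \, \Gamma_1 \join \cdots \join \hat{\Gamma}_j \join \cdots \join \Gamma_n$, where $C_{\hat{j}}$ is the $(n-1)\times(n-1)$ submatrix obtained by deleting the $j$-th column of $C = (c_{k,i})$. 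Cramer's rule applied to the kernel relation $C {\bf w} = {\bf 0}$ gives $\det(C_{\hat{j}}) = (-1)^{j+1} w_j / \kappa$ for a common scalar $\kappa$, producing exactly the sign pattern $(-1)^{i+1}$ in front of $w_i$. Hodge-dualizing back to the meet algebra via $*(x \meet y) = (*x) \join (*y)$ and $** = (-1)^{k(n+1-k)}\mathrm{id}$ (the remaining global sign is harmless projectively) yields the claimed formula.

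The \emph{main obstacle} is precisely this sign bookkeeping in (ii): the geometric identification of each column fixes it only up to an independent scalar, while the statement requires a compatible global normalization so that $\sum_i (-1)^{i+1} w_i\, \Gamma_1 \meet \cdots \meet \hat{\Gamma}_i \meet \cdots \meet \Gamma_n$ is a \emph{decomposable} extensor of the right line for every ${\bf w}$, rather than a generic element of $\join^2 \C^{n+1}$. A sanity check for $n=3$---where the direct meet $(w_2 \Gamma_1 - w_1 \Gamma_2) \meet (w_3 \Gamma_1 - w_1 \Gamma_3) = w_1 \bigl(w_1 \Gamma_2 \meet \Gamma_3 - w_2 \Gamma_1 \meet \Gamma_3 + w_3 \Gamma_1 \meet \Gamma_2\bigr)$ exhibits the alternating signs---confirms the pattern, and the Cramer/Hodge argument supplies the general proof.
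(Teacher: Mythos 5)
Your proof is correct and follows essentially the same route as the paper's: both represent the fibre line $\pi^{-1}({\bf w})$ as a meet of $n-1$ hyperplanes drawn from the pencil $\{\sum_i c_i\Gamma_i \,:\, \sum_i c_iw_i=0\}$ and expand multilinearly to read off the signed coefficients. The only difference is bookkeeping: the paper meets the specific hyperplanes $H_{i,i+1}=p_i\Gamma_{i+1}-p_{i+1}\Gamma_i$ and inducts, then removes the resulting factor $\prod_{j\geq 2}p_j$ by a density argument, whereas you take a general basis of the pencil and invoke the signed-minor (Cramer) identity for $\ker(C)$, which handles all ${\bf w}$ at once.
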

\begin{proof}
Define $L_i^k = (-1)^{i+1} \Gamma_1 \meet \cdots \meet \hat{\Gamma}_i \meet \cdots \meet \Gamma_k$ for $3 \leq k \leq n$ and $1 \leq i \leq k$. 

Let $p = (p_1, \cdots, p_n) \in i(\PP^{n-1})$ and $Q \in \PP^n$ such that $p \sim {\bf M}Q$. Observe that for any two distinct indeces $i,j$, $Q \in H_{ij}$, where $H_{ij} = p_i \Gamma_j - p_j \Gamma_i$. 

Therefore we have: $Q \in H_{12} \meet H_{23} \meet \cdots \meet H_{k-1,k}$ for any $1 \leq k \leq n$. By induction this yields that $Q$ lies in the space represented by $\prod_{j=2}^k p_j \sum_{i=1}^k p_i L_i^k$, which finally leads to $Q \join (\prod_{j=2}^n p_j \sum_{i=1}^n p_i L_i^n) = 0$. On the open set defined by $p_j \neq 0$ for $j \geq 2$, this yields $Q \join (\sum_{i=1}^n p_i L_i^n) = 0$, which shows that the columns of $\what{\bf M}_{n-1}$ are indeed given by $L_1^n, \cdots, L_n^n$. By density this is true on the whole space $i(\PP^{n-1})$. 

\end{proof}

\item For $2 \leq k \leq n$, there exists also a matrix $\wtilde{\bf M}_k$, the entries of which are here again polynomial functions of the entries of $\bf{M}$, and that maps $(n-k)-$planes of $\PP^n$ that do not contain the center of projection to $(n-k)-$planes in $i(\PP^{n-1})$. The existence of $\wtilde{\bf M}_k$ is also immediate since the image by $\pi$ of such a space, that does not contain the center of projection, is indeed a subspace of $i(\PP^{n-1})$ of dimension $n-k$. The matrix $\wtilde{\bf M}_k$ has $\binom{n+1}{n-k+1}$ columns and $\binom{n}{n-k+1}$ rows. 

Since any $(n-k)-$plane $H$ in $i(\PP^{n-1})$ is the projection of an $(n-k)-$plane that lies in the cone defined by $H$ and the center of projection, the matrix $\wtilde{\bf M}_k$ defines a surjective operator. Therefore its rank is $\rank(\wtilde{\bf M}_k) = \binom{n}{n-k+1}$. 

An explicit construction of $\wtilde{{\bf M}}_k$ is easily obtained since:
$$
\wtilde{\bf M}_k ({\bf w}_1 \join \cdots \join {\bf w}_{n-k+1}) = \join_{i=1}^{n-k+1} {\bf M} {\bf w}_i.
$$
\end{enumerate}

\subsection{Fundamental Matrices}
\label{sec::fund_mat}

Consider now two distinct projection operators $\pi_1$ and $\pi_2$. Let ${\bf
O}_1$ and ${\bf O}_2$ be the centers of projection, assumed to be distinct, and $i_1(\PP^{n-1})$
and $i_2(\PP^{n-1})$ the hyperplanes of projection. Let ${\bf e}_j$ be the
point of intersection of $i_j(\PP^{n-1})$ with the line $\overline{{\bf
O}_1 {\bf O}_2}$. 

Let $\si_k({\bf e}_j)$ be the pencil of $(n-1-k)-$planes in $i_j(\PP^{n-1})$ through ${\bf e}_j$, for $2 \leq k \leq n-1$. 

One can define a map from $\G(n-1-k,i_1(\PP^{n-1})) \setminus \sigma_{k}({\bf e}_1)$ to $\si_{k-1}({\bf e}_2)$. Each linear subspace $W$ of codimension $k$ in $i_1(\PP^{n-1})$ and that does not contain ${\bf e}_1$, that is $W \in \G(n-1-k,i_1(\PP^{n-1})) \setminus \sigma_{k}({\bf e}_1)$, is sent to the $(n-k)-$plane given by $\pi_2(\pi_1^{-1}(W))$. The subspace $\pi_2(\pi_1^{-1}(W))$ contains ${\bf e}_2$ since $\pi_1^{-1}(W)$ contains the first center of projection ${\bf O}_1$. 

This map is a projective transformation, defined by a linear function which matrix is ${\bf F}_k = \wtilde{\bf M}_{2,k} \what{\bf M}_{1,k}$. Therefore ${\bf F}_k$ is a matrix of size $\binom{n}{n-k+1} \times \binom{n}{n-k}$. The following lemma shows how the matrix ${\bf F}_k$ relates to projected entities. 

\begin{lemma}
\label{lemma:fund_relationship}
Consider a subspace $W \in \PP^n$ of dimension $n-1-k$ and its projection $W_1 = \pi_1(W)$ and $W_2 = \pi_2(W)$. Assuming that $W$ does not contain any of the centers of projection, the subspaces $W_1$ and $W_2$ have dimension $n-1-k$ as well and are respectively represented by the extensors ${\bf w}_1$ and ${\bf w}_2$. The following relation holds:
\begin{equation}
\label{eq::Fk}
{\bf F}_{k} {\bf w}_1 \sim {\bf e}_2 \join {\bf w}_2. 
\end{equation}
In particular for $k = n-1$, ${\bf w}_1$ and ${\bf w}_2$ are points and we have: 
\begin{equation}
\label{eq::Fn-1}
{\bf w}_2 \join {\bf F}_{n-1} {\bf w}_1 = 0.
\end{equation}
 \end{lemma}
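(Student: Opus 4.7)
The plan is to unfold the definition ${\bf F}_k = \wtilde{\bf M}_{2,k} \what{\bf M}_{1,k}$ and trace each factor geometrically using properties (4) and (5) of projection matrices collected in Section~\ref{sec::projMaps}. First, by the defining property of $\what{\bf M}_{1,k}$, the extensor $\what{\bf M}_{1,k} {\bf w}_1$ represents the $(n-k)$-plane $\pi_1^{-1}(W_1)$, namely the cone with apex ${\bf O}_1$ over $W_1$. Since $W$ does not contain ${\bf O}_1$ and $\pi_1(W) = W_1$, this cone coincides with the join ${\bf O}_1 \join W$, and hence $\what{\bf M}_{1,k} {\bf w}_1 \sim {\bf O}_1 \join W$ as an extensor of step $n-k+1$ in $\join \C^{n+1}$.

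Next I would apply $\wtilde{\bf M}_{2,k}$ to this extensor. Assuming generically that the line $\overline{{\bf O}_1 {\bf O}_2}$ does not meet $W$ --- equivalently that ${\bf O}_2 \notin {\bf O}_1 \join W$ --- the defining property of $\wtilde{\bf M}_{2,k}$ gives $\wtilde{\bf M}_{2,k}({\bf O}_1 \join W) \sim \pi_2({\bf O}_1 \join W)$. Because $\pi_2$ is a linear projection whose center lies outside the plane being projected, it preserves joins here, so $\pi_2({\bf O}_1 \join W) = \pi_2({\bf O}_1) \join \pi_2(W) = {\bf e}_2 \join W_2$, represented by the extensor ${\bf e}_2 \join {\bf w}_2$. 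This establishes \eqref{eq::Fk} on a Zariski-open dense locus; since both sides are polynomial in ${\bf w}_1$, the equality extends by continuity to the full set where $W$ avoids both centers.

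For \eqref{eq::Fn-1}, specialize to $k = n-1$: then ${\bf w}_1$ and ${\bf w}_2$ are points and ${\bf F}_{n-1} {\bf w}_1 \sim {\bf e}_2 \join {\bf w}_2$ represents a line through ${\bf w}_2$. Therefore ${\bf w}_2 \join {\bf F}_{n-1} {\bf w}_1 \sim {\bf w}_2 \join {\bf e}_2 \join {\bf w}_2 = 0$ by antisymmetry of the join. The only delicate point in this approach is not the geometric computation itself but the density argument needed to cover the case ${\bf O}_2 \in {\bf O}_1 \join W$, where the geometric interpretation of $\wtilde{\bf M}_{2,k}$ breaks down; this exceptional situation cuts out a Zariski-closed subset of inputs, across which the polynomial identity extends by continuity.
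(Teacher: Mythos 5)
Your proof is correct and follows essentially the same route as the paper: both arguments identify ${\bf F}_k{\bf w}_1$ with the $(n-k)$-plane $\pi_2(\pi_1^{-1}(W_1))=\pi_2({\bf O}_1\join W)$ and observe that this plane contains ${\bf e}_2$ and $W_2$, hence is ${\bf e}_2\join{\bf w}_2$; the paper simply phrases this as a dimension count rather than unfolding the factorization $\wtilde{\bf M}_{2,k}\what{\bf M}_{1,k}$. Your closing continuity argument is slightly misphrased (the right-hand side ${\bf e}_2\join{\bf w}_2$ is not a function of ${\bf w}_1$ alone, so one should parametrize by $W$, or note that in the excluded locus both sides vanish), but this is a cosmetic point the paper does not address either.
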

\begin{proof}
The only point that requires justification is equation~\ref{eq::Fk}, which indeed holds true, since the space represented by ${\bf F}_{k} {\bf w}_1$ has dimension $n-k$ and contains both the space $W_2$ and the point ${\bf e}_2$. 
\end{proof}

Extending the standard terminology used in the case $n=3$, we will call the matrix ${\bf F}_k$ the {\it
fundamental matrix of order $k$} of the pair of projections $\pi_1$ and $\pi_2$ and the points ${\bf e}_1$ and ${\bf e}_2$ will be
respectively called the {\it first and the second epipole}. The $(n-1-k)-$planes
in the first (respectively second) projection hyperplane passing through the first (respectively second) epipole are called the {\it epipolar spaces of order $k$}.

Since every $(n-k)-$plane in $\sigma_{k-1}({\bf e}_2)$ is the projection into $i_2(\PP^{n-1})$ of a $(n-k)-$plane in $\PP^n$ passing through ${\bf O}_1$, the fundamental matrix ${\bf F}_k$ defines a surjective operator, so that $\rank({\bf F}_k) = \dim(\sigma_{k-1}({\bf e}_2))$. Since $\sigma_{k-1}({\bf e}_2)$ is isomorphic to $\join^{n-k} V$, where $V = \C^n/\C{\bf e}_2$, we have that $\rank({\bf F}_k) = \binom{n-1}{n-k}$.

This yields the following proposition.

\begin{proposition}
\label{prop::space_fund_mat}
The space of fundamental matrices of order $k$ is an open set of a projective space of dimension 
$$
\begin{array}{rcl}
N_k & = & \binom{n}{n-k+1} \times \binom{n}{n-k} \\
& & - \left (  \binom{n}{n-k+1} - \binom{n-1}{n-k} \right )\left (  \binom{n}{n-k} - \binom{n-1}{n-k} \right ) - 1.
\end{array}
$$
\end{proposition}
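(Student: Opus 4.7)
The plan is to identify the space of fundamental matrices of order $k$ inside a classical projective determinantal variety, and then apply the standard dimension formula for such varieties.

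First, the shape of ${\bf F}_k$ follows directly from its construction. The factorization ${\bf F}_k = \wtilde{\bf M}_{2,k}\what{\bf M}_{1,k}$, together with the sizes of $\wtilde{\bf M}_{2,k}$ and $\what{\bf M}_{1,k}$ established in section~\ref{sec::projMaps}, shows that ${\bf F}_k$ has $p := \binom{n}{n-k+1}$ rows and $q := \binom{n}{n-k}$ columns. Considered modulo nonzero scalars, it is therefore a point of the ambient projective space $\PP^{pq-1}$.

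Second, the rank is constrained. The surjectivity statement recorded just before the proposition gives $\rank({\bf F}_k) = \dim\sigma_{k-1}({\bf e}_2) = r := \binom{n-1}{n-k}$. Thus every fundamental matrix of order $k$ lies in the projective determinantal variety $\mathcal{D}^{p,q}_r \subset \PP^{pq-1}$ of $p\times q$ matrices of rank at most $r$.

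Third, one invokes the classical result that $\mathcal{D}^{p,q}_r$ is an irreducible projective variety of codimension $(p-r)(q-r)$, hence of dimension $pq - (p-r)(q-r) - 1$, which coincides with $N_k$ after substitution of $p$, $q$, $r$. The set of fundamental matrices forms an open subset of this variety: matrices with rank strictly less than $r$ form a proper closed subvariety, and the admissibility conditions on the underlying projection pair (distinct centers, full-rank projection matrices) impose further open constraints.

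The only nontrivial ingredient is the dimension of $\mathcal{D}^{p,q}_r$, which is a standard result. One convenient proof realizes a generic rank-$r$ matrix as the product of a full-rank $p \times r$ matrix by a full-rank $r \times q$ matrix, modulo the $GL_r$ action on the intermediate factor; a dimension count then yields $pr + rq - r^2 - 1 = r(p+q-r) - 1$, in agreement with $pq - (p-r)(q-r) - 1$. Verifying irreducibility and the exact codimension (rather than a mere upper bound from the fact that the ideal is cut out by the $(r{+}1)\times(r{+}1)$ minors) is where one must be careful; this is the main technical point of the argument.
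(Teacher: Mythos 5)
Your proposal is correct and follows essentially the same route as the paper: identify ${\bf F}_k$ as a point of $\PP^{pq-1}$ with $p = \binom{n}{n-k+1}$, $q = \binom{n}{n-k}$ and rank exactly $r = \binom{n-1}{n-k}$, then invoke the classical dimension formula $pq - (p-r)(q-r) - 1$ for the determinantal variety of matrices of rank at most $r$ (the paper simply cites Harris for this). Your additional sketch of the $p\times r$ times $r\times q$ factorization and the remark that the rank-deficient locus is a proper closed subvariety only make explicit what the paper leaves to the reference.
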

\begin{proof}
The fundamental matrices are defined modulo multiplication by a non-zero scalar and as so define points in a projective space. The dimension of the space is derived from a classical result concerning the dimension of the variety of $m \times p$ matrices of rank at most $r$ (see~\cite{Harris-92}). 
\end{proof}

Two cases are of a particular importance in the sequel. For $k=2$, ${\bf F}_2$ maps $(n-3)-$planes in $i_1(\PP^{n-1})$ to $(n-2)-$planes in $i_2(\PP^{n-1})$, that contain ${\bf e}_2$. The size of these matrices is given by $n \times \frac{n(n-1)}{2}$. The rank in this case is $\rank({\bf F}_2) = n-1$. The space of such fundamental matrices has dimension $N_2 = (n-1) \frac{n^2-n+2}{2}-1$. 

For $k=n-1$, ${\bf F}_{n-1}$ acts on points in $i_1(\PP^{n-1})$ and outputs lines in $i_2(\PP^{n-1})$ through ${\bf e}_2$. It will be called \textit{the reduced fundamental matrix}. Notice that ${\bf F}_{n-1}$ has size $\frac{n(n-1)}{2} \times n$.

\begin{lemma}
The knowledge of ${\bf F}_k$ for some $k < n-1$ allows computing ${\bf F}_{k+1}$.  
\end{lemma}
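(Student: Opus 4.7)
The plan is to express $\mathbf{F}_{k+1}(\mathbf{w})$ geometrically as an intersection of two subspaces, each of which is known to be the image under $\mathbf{F}_k$ of an extensor derived from $\mathbf{w}$ by joining with a generic point. Concretely, take an extensor $\mathbf{w}$ representing a generic $(n-2-k)$-plane $W$ in $i_1(\PP^{n-1})$ (not containing $\mathbf{e}_1$), and pick two generic points $\mathbf{p}, \mathbf{q} \in i_1(\PP^{n-1})$ whose spans with $W$ do not contain $\mathbf{e}_1$. Then $\mathbf{w}\join\mathbf{p}$ and $\mathbf{w}\join\mathbf{q}$ are $(n-1-k)$-extensors to which $\mathbf{F}_k$ legitimately applies.

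By Lemma~\ref{lemma:fund_relationship}, $\mathbf{F}_k(\mathbf{w}\join\mathbf{p})$ represents the $(n-k)$-plane $\mathbf{e}_2\join\pi_2(\pi_1^{-1}(W\join P))$, which factors as
$\mathbf{e}_2 \join \pi_2(\pi_1^{-1}(W)) \join \pi_2(\pi_1^{-1}(\mathbf{p}))$, and similarly for $\mathbf{q}$. Both subspaces contain $\mathbf{e}_2 \join \pi_2(\pi_1^{-1}(W))$, which is precisely $\mathbf{F}_{k+1}(\mathbf{w})$; they differ only by the distinct lines $\pi_2(\pi_1^{-1}(\mathbf{p}))$ and $\pi_2(\pi_1^{-1}(\mathbf{q}))$ through $\mathbf{e}_2$. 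A short dimension count in the $(n-1)$-dimensional space $i_2(\PP^{n-1})$ shows that, for generic $\mathbf{p}, \mathbf{q}$, these two $(n-k)$-planes span an $(n-k+1)$-plane and therefore intersect in a subspace of dimension exactly $n-k-1$, which must then be $\mathbf{F}_{k+1}(\mathbf{w})$.

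Using the generalized meet operator $\overline{\meet}$ introduced in Section~\ref{sec::mourrain}, this yields the explicit formula
\begin{equation*}
\mathbf{F}_{k+1}(\mathbf{w}) \;\sim\; \mathbf{F}_k(\mathbf{w}\join\mathbf{p}) \;\overline{\meet}\; \mathbf{F}_k(\mathbf{w}\join\mathbf{q}),
\end{equation*}
whose right-hand side is computable from $\mathbf{F}_k$ alone. Applying this to a basis of decomposable $(n-k-1)$-extensors of $\C^n$ recovers the columns of $\mathbf{F}_{k+1}$, and linearity of $\overline{\meet}$ (in each argument, after fixing $\mathbf{p}, \mathbf{q}$) extends the construction to arbitrary elements of $\join^{n-k-1} V_1$.

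The main obstacle is verifying that the intersection has the expected dimension: \emph{a priori} two $(n-k)$-planes sharing a $(n-k-1)$-plane could meet in something larger if the extra directions happen to coincide. This amounts to showing that $\pi_2(\pi_1^{-1}(\mathbf{p}))$ and $\pi_2(\pi_1^{-1}(\mathbf{q}))$ generate distinct lines through $\mathbf{e}_2$, which fails only on a proper Zariski-closed subset of pairs $(\mathbf{p}, \mathbf{q})$; since both sides of the displayed formula are polynomial in the data and agree on the complement of this locus, the identity holds generically and hence extends to the whole Grassmannian by density, producing $\mathbf{F}_{k+1}$ as a well-defined matrix.
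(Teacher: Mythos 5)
Your proof is correct and follows essentially the same route as the paper: realize the codimension-$(k+1)$ space $W$ as the intersection of codimension-$k$ spaces containing it (the paper takes abstract $Q_1,\dots,Q_L$ with $W=\cap_i Q_i$; you take the two generic joins $W\join P$ and $W\join Q$), apply ${\bf F}_k$ to each, and recover ${\bf F}_{k+1}{\bf w}$ as the meet of the images, then determine the matrix from enough such correspondences. Your dimension count showing the two image $(n-k)$-planes meet exactly in ${\bf F}_{k+1}{\bf w}$ makes explicit a step the paper's proof asserts without justification.
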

\begin{proof}
Consider an $(n-k-2)-$plane $W$ in $i_1(\PP^{n-1})$ not in $\sigma_{k+1}({\bf e}_1)$. Consider the set $\Sigma_{k+1}$ of linear $(n-k-1)-$dimensional subspaces that contain $W$. Consider $L$ independent spaces in $\Sigma_{k+1}$, $Q_1,...,Q_L$, such that $W = \cap_{i=1}^L Q_i$. Let ${\bf w}, {\bf q}_1, \cdots ,{\bf q}_L$ be the extensors that represent these spaces. Then the space $W'$ represented by ${\bf F}_{k+1}{\bf w}$  satisfies: $W' = \cap_{i=1}^L Q_i'$, where $Q_i'$ is represnted by ${\bf F}_{k}{\bf q}_i$. This allows computing as many correspondences $(W,W')$ as needed to allow the linear calculation of ${\bf F}_{k+1}$. 
\end{proof}

We can make a more precise statement. It turns out that ${\bf F}_k$ is a polynomial function of ${\bf F}_l$ for any pair $k,l$.

\begin{lemma}
\label{lemma::polynomial_relations_F}
For $k,l$ such that $2 \leq k,l \leq n-2$, the entries of ${\bf F}_l$ are given by polynomial functions of the entries of ${\bf F}_k$. We write this as a unique polynomial function with multiple components $\phi_{kl}$, such that  ${\bf F}_l = \phi_{kl}({\bf F}_k)$. Moreover we have: $\phi_{kk} = Id$ and $\phi_{kl} = \phi_{ul} \circ \phi_{ku}$ for any triple $2 \leq k,l,u \leq n-2$.  
\end{lemma}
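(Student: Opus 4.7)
The strategy is to build $\phi_{kl}$ by composing polynomial maps between adjacent indices, exploiting the generalized meet $\overline{\meet}$ and generalized join $\overline{\join}$ from section~\ref{sec::mourrain}, which are by construction polynomial on extensor coordinates. Once the polynomial nature of $\phi_{k,k+1}$ and $\phi_{k,k-1}$ is established, the general $\phi_{kl}$ arises by composition along the chain of adjacent indices, the identity $\phi_{kk}=\mathrm{Id}$ is tautological, and the transitivity identity $\phi_{kl}=\phi_{ul}\circ\phi_{ku}$ follows because both sides implement the same geometric map $F_k \mapsto F_l$ on the locus of valid fundamental matrices.

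For going up from $F_k$ to $F_{k+1}$, start from a decomposable extensor $\mathbf{w} = \mathbf{v}_1 \join \cdots \join \mathbf{v}_{n-k-1}$ representing a codim-$(k+1)$ subspace $W \subset i_1(\PP^{n-1})$. Complete $\mathbf{v}_1,\ldots,\mathbf{v}_{n-k-1}$ to a basis of $V_1$ with auxiliary vectors $\mathbf{u}_1, \ldots, \mathbf{u}_{k+1}$, and form $\mathbf{q}_i = \mathbf{w} \join \mathbf{u}_i$. Each $Q_i$ is a codim-$k$ subspace containing $W$, and $\bigcap_i Q_i = W$. The previous lemma, translated from set-theoretic intersection to Grassmann--Cayley algebra via the generalized meet, yields
\begin{equation*}
F_{k+1}(\mathbf{w}) \;\sim\; F_k(\mathbf{q}_1) \,\overline{\meet}\, F_k(\mathbf{q}_2) \,\overline{\meet}\, \cdots \,\overline{\meet}\, F_k(\mathbf{q}_{k+1}).
\end{equation*}
Since each $\mathbf{q}_i$ is linear in $\mathbf{w}$, $F_k$ is linear, and $\overline{\meet}$ is polynomial on extensors, this produces $F_{k+1}(\mathbf{w})$ as a polynomial in the entries of $F_k$ and the coordinates of $\mathbf{w}$. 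Extending by multilinearity over a basis of decomposables determines all entries of $F_{k+1}$ as polynomials in the entries of $F_k$.

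For going down from $F_k$ to $F_{k-1}$, use the dual picture: any codim-$(k-1)$ extensor can be written, on a dense open set, as $\mathbf{q} = \mathbf{w} \join \mathbf{u}$ with $\mathbf{w}$ codim-$k$ and $\mathbf{u}$ a vector transverse to $W$. Because $\pi_1^{-1}$ and $\pi_2$ both commute with the span operation, one obtains
\begin{equation*}
F_{k-1}(\mathbf{q}) \;\sim\; F_k(\mathbf{w}) \,\overline{\join}\, F_{n-1}(\mathbf{u}),
\end{equation*}
where the auxiliary line $F_{n-1}(\mathbf{u}) = \pi_2\pi_1^{-1}(\mathbf{u})$ is obtained by iterating the going-up construction from $F_k$ through the intermediate indices up to $n-1$. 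Since $\overline{\join}$ is polynomial, the composite $\phi_{k,k-1}$ is polynomial. Setting $\phi_{kl} = \phi_{l-1,l}\circ\cdots\circ\phi_{k,k+1}$ for $l>k$ and $\phi_{kl} = \phi_{l+1,l}\circ\cdots\circ\phi_{k,k-1}$ for $l<k$ finishes the construction.

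The principal obstacle is that the expressions depend on auxiliary non-canonical data (the complementary vectors $\mathbf{u}_i$ in the upward step and the decomposition $\mathbf{q} = \mathbf{w} \join \mathbf{u}$ in the downward step). Different choices can change the polynomial representative by a non-zero scalar factor, but the generalized operators $\overline{\meet}$ and $\overline{\join}$ of Mourrain always produce the correct projective subspace; modulo the projective equivalence $\sim$ the formulas yield a well-defined polynomial map between the Pl\"ucker coordinates of $F_k$ and those of $F_l$, which is exactly the content of the lemma. The transitivity and identity relations then hold on the full variety of valid fundamental matrices and, being polynomial identities, extend to the ambient matrix spaces.
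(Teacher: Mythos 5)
Your proposal is correct and follows essentially the same route as the paper: decompose the target subspace as an intersection (resp.\ sum) of subspaces of the source codimension and apply the generalized meet $\overline{\meet}$ (resp.\ join $\overline{\join}$) to the images under ${\bf F}_k$, these being polynomial operations on extensor coordinates. The only organizational differences are that you chain adjacent indices and compose, and that your downward step joins ${\bf F}_k({\bf w})$ with ${\bf F}_{n-1}({\bf u})$ for a point ${\bf u}$ rather than joining the images of several codimension-$k$ subspaces summing to the target, as the paper does; both variants rest on the same commutation of $\pi_2\circ\pi_1^{-1}$ with intersections and spans.
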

\begin{proof}
In this proof, we shall use the generalized operators $\overline{\join}$ and $\overline{\meet}$ defined in section~\ref{sec::mourrain}.

Assume that $k < l$ and let $p=l-k$. Let ${\bf w} \in \G(n-1-l,i_1(\PP^{n-1}))$. Consider ${\bf w}_1, \cdots, {\bf w}_p \in \G(n-1-k,i_1(\PP^{n-1}))$, representing spaces $W_1, \cdots, W_p$, such that $W_1 \cap \cdots \cap W_p = W$, the space represented by ${\bf w}$. Then $\pi_2(\pi^{-1}(W)) = \bigcap_{i=1}^p \pi_2(\pi^{-1}(W_i))$, so that ${\bf F}_l {\bf w} = {\bf F}_k {\bf w}_1 \overline{\meet} \cdots \overline{\meet} {\bf F}_k {\bf w}_p$. This proves that the result does not depend on the specific expression of $W$ used for the computation and more important this shows that ${\bf F}_l = \phi_{kl}({\bf F}_k)$, $\phi_{kl}$ being a polynomial function.    

Now assume that $k > l$ and then let $p = k - l$. This time let ${\bf w}_1, \cdots, {\bf w}_p \in \G(n-1-k,i_1(\PP^{n-1}))$, representing spaces $W_1, \cdots, W_p$, such that $W_1 + \cdots + W_p = W$. Then we have $\pi_2(\pi^{-1}(W)) = \sum_{i=1}^p \pi_2(\pi^{-1}(W_i))$, so that ${\bf F}_l {\bf w} = {\bf F}_k {\bf w}_1 \overline{\join} \cdots \overline{\join} {\bf F}_k {\bf w}_p$.  This shows again the result does not depend on the specific decomposition of $W$ used for the computation and that ${\bf F}_l = \phi_{kl}({\bf F}_k)$, for some polynomial function $\phi_{kl}$.     

The relations between the different functions $\phi_{kl}$ are valid since the computation never depends on the sequence of subspaces used for it. 
\end{proof}

\begin{theorem}
\label{thm::FundEquiv}
For any $2 \leq k \leq n-1$, the fundamental matrix of order $k$, ${\bf F}_k$, is invariant on each orbit under the action $\varrho$ of the projective group, defined in~\ref{action::varrho}. Moreover each pair of projection matrices $({\bf M}_1, {\bf M}_2)$ is equivalent to a pair of the form $([{\bf I},{\bf 0}], [{\bf H},{\bf e}_2])$, where ${\bf H}$ depends on ${\bf F}_{n-1}$.  
\end{theorem}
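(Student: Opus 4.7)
The plan splits naturally into the two assertions of the theorem.

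For the invariance of $\mathbf{F}_k$ under $\varrho$, I would argue at the level of the geometric map being encoded rather than through a matrix calculation. The construction leading to $\mathbf{F}_k = \wtilde{\mathbf{M}}_{2,k}\,\what{\mathbf{M}}_{1,k}$ shows that $\mathbf{F}_k$ is the matrix of the transformation $W \mapsto \pi_2(\pi_1^{-1}(W))$ between Grassmannians of $i_1(\PP^{n-1})$ and $i_2(\PP^{n-1})$. Under $\varrho(\mathbf{A})$ the projections become $\pi_j' = \pi_j \circ \mathbf{A}^{-1}$, so $\pi_2' \circ (\pi_1')^{-1} = (\pi_2 \circ \mathbf{A}^{-1}) \circ (\mathbf{A} \circ \pi_1^{-1}) = \pi_2 \circ \pi_1^{-1}$. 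The map is thus unchanged, and since the bases of the image $\C^n$'s that fix the matrix representation are not affected by $\mathbf{A}$, its matrix $\mathbf{F}_k$ is unchanged as well.

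For the canonical form, I would construct $\mathbf{A}$ explicitly from $\mathbf{M}_1$. Pick any right inverse $\mathbf{P}_1$ of $\mathbf{M}_1$ (which exists because $\mathbf{M}_1$ has full row rank), so $\mathbf{M}_1 \mathbf{P}_1 = \mathbf{I}_n$. The columns of $\mathbf{P}_1$ form a complement in $\C^{n+1}$ to $\ker(\mathbf{M}_1) = \C\mathbf{O}_1$, hence the block matrix $\mathbf{A}^{-1} := [\mathbf{P}_1 \vtl \mathbf{O}_1]$ is invertible. A direct block multiplication then yields $\mathbf{M}_1 \mathbf{A}^{-1} = [\mathbf{I}_n \vtl \mathbf{0}]$ and $\mathbf{M}_2 \mathbf{A}^{-1} = [\mathbf{H} \vtl \mathbf{e}_2]$ with $\mathbf{H} := \mathbf{M}_2 \mathbf{P}_1$ and $\mathbf{M}_2 \mathbf{O}_1 = \pi_2(\mathbf{O}_1) = \mathbf{e}_2$, the second epipole.

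It then remains to show that this $\mathbf{H}$ depends on $\mathbf{F}_{n-1}$. Working with the canonical pair, for $\mathbf{p} \in i_1(\PP^{n-1})$ the fiber $\pi_1^{-1}(\mathbf{p})$ is spanned by $[\mathbf{p}^T, 0]^T$ and $\mathbf{O}_1$, so its image under $\pi_2$ is the line joining $\mathbf{H}\mathbf{p}$ and $\mathbf{e}_2$. Applying Lemma~\ref{lemma:fund_relationship} with $k = n-1$ gives the closed form $\mathbf{F}_{n-1}\,\mathbf{p} \sim \mathbf{e}_2 \join \mathbf{H}\mathbf{p}$, i.e.\ $\mathbf{F}_{n-1}$ is the matrix of the linear map $\mathbf{p} \mapsto \mathbf{e}_2 \join \mathbf{H}\mathbf{p}$. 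This linear relation makes $\mathbf{H}$ recoverable from $\mathbf{F}_{n-1}$ (together with $\mathbf{e}_2$, itself read off as a kernel of the transpose of $\mathbf{F}_{n-1}$).

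The main subtlety — more a matter of precision than a technical obstacle — is that $\mathbf{H}$ is only determined up to addition of a matrix of the form $\mathbf{e}_2 \mathbf{v}^T$, since such matrices lie in the kernel of the join-by-$\mathbf{e}_2$ operator. This indeterminacy exactly matches the freedom in choosing the right inverse $\mathbf{P}_1$ and can be absorbed in a further projective transformation of $\PP^n$ that fixes $\mathbf{O}_1$, so it does not disturb the canonical form statement.
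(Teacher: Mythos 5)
Your proof is correct, and while the first half (invariance of ${\bf F}_k$ under $\varrho$) is essentially the paper's argument in a slightly more map-theoretic dress --- the paper phrases it as ``the same image points ${\bf p}_1,{\bf p}_2$ arise from ${\bf Q}={\bf AP}$, so both pairs satisfy the same correspondence equation~\eqref{eq::Fn-1} and hence have the same reduced fundamental matrix,'' then invokes Lemma~\ref{lemma::polynomial_relations_F} to propagate to all orders, whereas you note directly that $\pi_2'\circ(\pi_1')^{-1}=\pi_2\circ\pi_1^{-1}$ --- your treatment of the canonical form is a genuinely different and more elementary route. The paper proceeds \emph{backwards}: it first \emph{defines} ${\bf H}\sim[*{\bf e}_2]_\meet{\bf F}_{n-1}$ via the Grassmann--Cayley meet (thereby fixing a canonical representative whose columns lie in the hyperplane $*{\bf e}_2$), and then solves block by block for ${\bf B}={\bf A}^{-1}$ using the explicit formulas of Lemma~\ref{lemma::Fn-1}, which requires a somewhat delicate verification that suitable $\lambda$ and ${\bf v}$ exist in equations~\eqref{eq::eq1}--\eqref{eq::eq2}. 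You proceed \emph{forwards}: ${\bf A}^{-1}=[{\bf P}_1\,|\,{\bf O}_1]$ gives the normal form in one line of block multiplication, and the relation ${\bf F}_{n-1}{\bf p}\sim{\bf e}_2\join{\bf H}{\bf p}$ then exhibits ${\bf H}$ as determined by ${\bf F}_{n-1}$ up to the ${\bf e}_2{\bf v}^T$ ambiguity, which you correctly identify as exactly the freedom in the right inverse ${\bf P}_1$ (equivalently, a projective transformation fixing ${\bf O}_1$). What your approach buys is brevity and transparency of the existence of ${\bf A}$; what the paper's buys is an explicit closed-form choice of ${\bf H}$ as a function of ${\bf F}_{n-1}$ alone, with no residual ambiguity --- if you wanted to match that, you could normalize your ${\bf H}$ by post-composing with $[*{\bf e}_2]_\meet[{\bf e}_2]_\join$, which kills precisely the ${\bf e}_2{\bf v}^T$ component. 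No gaps.
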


We shall need the following lemma. 

\begin{lemma}
\label{lemma::Fn-1}
Assume ${\bf M}_i = [\overline{\bf M}_i, {\bf m}_i]$ (i=1,2) are projection matrices, such that $\overline{\bf M}_1,\overline{\bf M}_2$ are invertible. Then the second epipole is given by ${\bf e}_2 \sim -\overline{{\bf M}}_2 \overline{\bf M}_1^{-1} {\bf m}_1 + {\bf m}_2$. The reduced fundamental matrix is given by ${\bf F}_{n-1} \sim
[{\bf e}_2]_\join \overline{\bf M}_2 \overline{\bf M}_1^{-1}$, where $[{\bf e}_2]_\join$ represents the join map ${\bf w} \mapsto {\bf e}_2 \join {\bf w}$. 
\end{lemma}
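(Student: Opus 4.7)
The proof is essentially a two-step direct computation in block form, and no step is a real obstacle; the only minor subtlety is choosing a convenient section of $\pi_1$ so that the final matrix expression drops out cleanly.

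First I would locate the first center of projection as $\ker({\bf M}_1)$. Writing ${\bf O}_1 = [{\bf o}_1;\, o]^T$ in the block decomposition induced by ${\bf M}_1 = [\overline{\bf M}_1,\, {\bf m}_1]$, the kernel condition $\overline{\bf M}_1 {\bf o}_1 + o\,{\bf m}_1 = 0$ combined with invertibility of $\overline{\bf M}_1$ gives ${\bf o}_1 = -o\,\overline{\bf M}_1^{-1}{\bf m}_1$; normalizing $o = 1$ yields ${\bf O}_1 \sim [-\overline{\bf M}_1^{-1}{\bf m}_1;\, 1]^T$. Since ${\bf e}_2$ is by definition the projection of ${\bf O}_1$ onto $i_2(\PP^{n-1})$ through $\pi_2$, one simply evaluates ${\bf e}_2 \sim {\bf M}_2 {\bf O}_1 = -\overline{\bf M}_2 \overline{\bf M}_1^{-1}{\bf m}_1 + {\bf m}_2$, which is the stated formula.

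For the reduced fundamental matrix I would invoke Lemma~\ref{lemma:fund_relationship} with $k = n-1$: given a point ${\bf w}_1 \in i_1(\PP^{n-1})$, take any lift ${\bf P} \in \PP^n$ with $\pi_1({\bf P}) = {\bf w}_1$, set ${\bf w}_2 = \pi_2({\bf P})$, and then ${\bf F}_{n-1} {\bf w}_1 \sim {\bf e}_2 \join {\bf w}_2$. The natural lift in the current block form is ${\bf P} \sim [\overline{\bf M}_1^{-1}{\bf w}_1;\, 0]^T$, which indeed satisfies ${\bf M}_1 {\bf P} = {\bf w}_1$. Applying $\pi_2$ produces ${\bf w}_2 = \overline{\bf M}_2 \overline{\bf M}_1^{-1}{\bf w}_1$, so
\[
{\bf F}_{n-1} {\bf w}_1 \sim {\bf e}_2 \join \bigl(\overline{\bf M}_2 \overline{\bf M}_1^{-1}{\bf w}_1\bigr) = [{\bf e}_2]_\join\,\overline{\bf M}_2 \overline{\bf M}_1^{-1}\,{\bf w}_1.
\]
Since this identity holds for every ${\bf w}_1$, I conclude ${\bf F}_{n-1} \sim [{\bf e}_2]_\join\,\overline{\bf M}_2 \overline{\bf M}_1^{-1}$.

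I would close by noting that the choice of lift ${\bf P}$ is immaterial: any two preimages of ${\bf w}_1$ differ by a scalar multiple of ${\bf O}_1$, and pushing this difference through $\pi_2$ contributes only a multiple of ${\bf e}_2$, which is annihilated by the join with ${\bf e}_2$. This justifies the convenient zero-last-coordinate section used above and confirms that the resulting matrix depends only on $({\bf M}_1,{\bf M}_2)$ up to the overall projective scale.
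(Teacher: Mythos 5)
Your proposal is correct and follows essentially the same route as the paper: both compute ${\bf O}_1 \sim [-\overline{\bf M}_1^{-1}{\bf m}_1,1]$, project it to get ${\bf e}_2$, and use the lift $[\overline{\bf M}_1^{-1}{\bf w}_1,0]$ of a point in the first hyperplane to identify ${\bf F}_{n-1}{\bf w}_1$ with ${\bf e}_2 \join \overline{\bf M}_2\overline{\bf M}_1^{-1}{\bf w}_1$. Your closing remark on the independence of the choice of lift is a small but welcome addition that the paper leaves implicit.
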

\begin{proof}
The first center of projection is given by ${\bf O}_1 \sim [-\overline{\bf M}_1^{-1} {\bf m}_1,1]$ in homogeneous coordinates. This implies that the second epipole ${\bf e}_2$ has the following expression ${\bf e}_2 \sim -\overline{{\bf M}}_2 \overline{\bf M}_1^{-1} {\bf m}_1 + {\bf m}_2$. 

Consider a point ${\bf p} \in i_1(\PP^{n-1})$. Then the point ${\bf Q} \sim [\overline{\bf M}_1^{-1}{\bf p},0]$ is projected onto ${\bf p}$. Therefore the line ${\bf F}_{n-1} {\bf p}$ is generated by ${\bf e}_2$ and ${\bf M}_2 {\bf Q} \sim \overline{\bf M}_2 \overline{\bf M}_1^{-1}{\bf p}$. This yields the result.  
\end{proof}

Notice that the lemma makes it explicit that ${\bf F}_{n-1}^T {\bf e}_2 = {\bf 0}$. We are now in a position to prove the theorem~\ref{thm::FundEquiv}. 

\begin{proof}
Let us prove the theorem for $k = n-1$. According to lemma~\ref{lemma::polynomial_relations_F}, the theorem is then true for all $k$. 

Two pairs $({\bf M}_1, {\bf M}_2)$ and $({\bf N}_1, {\bf N}_2)$ are equivalent if there exists $A \in \Pr_m$, such that ${\bf N}_1 \sim {\bf M}_1 {\bf A}^{-1}$ and ${\bf N}_2 \sim {\bf M}_2 {\bf A}^{-1}$. Consider a point ${\bf P} \in \PP^n$ projected onto ${\bf p}_i \sim {\bf M}_i {\bf P}$ for $i=1,2$. Then we have ${\bf p}_i \sim {\bf M}_i {\bf A}^{-1} {\bf A} {\bf P}$, so the same points ${\bf p}_1$ and ${\bf p}_2$ are also the projections of ${\bf Q} = {\bf AP}$ by the operators ${\bf N}_1$ and ${\bf N}_2$. Therefore the correspondence equation~\eqref{eq::Fn-1} introduced in  lemma~\ref{lemma:fund_relationship} holds for the points ${\bf p}_i$ (i=1,2) and the reduced fundamental matrices of both pairs $({\bf M}_1, {\bf M}_2)$ and $({\bf N}_1, {\bf N}_2)$, which implies the two reduced fundamental matrices are equal. Then by lemma~\ref{lemma::polynomial_relations_F}, the fundamental matrices of the two ordered pairs $({\bf M}_1, {\bf M}_2)$ and $({\bf N}_1, {\bf N}_2)$ are identical at any order.   

Now let us prove the pair $({\bf M}_1, {\bf M}_2)$ is equivalent to a pair of the form $([{\bf I},{\bf 0}], [{\bf H},{\bf e}_2])$. Let ${\bf F}_{n-1}$ and ${\bf e}_2$ be the reduced fundamental matrix and the epipole relative to $({\bf M}_1, {\bf M}_2)$. Let us define ${\bf H} \sim [*{\bf e}_2]_\meet {\bf F}_{n-1}$, where $[*{\bf e}_2]_\meet {\bf L} = {* \bf e}_2 \meet {\bf L}$ represents the meet operator between the hyperplane defined by ${* \bf e}_2$ and the line ${\bf L}$. Since ${\bf e}_2$ does not lie on the hyperplane defined by $*{\bf e}_2$, the matrix ${\bf H}$ is not identically null and has rank $m-2$. 

We are looking for a matrix ${\bf A} \in Pr_n$ such that: 
\begin{equation}
\label{eq::equiv_equs}
\begin{array}{ccc}
{\bf M}_1 & \sim & [{\bf I},{\bf 0}] {\bf A}^{-1} \\
{\bf M}_2 & \sim & [{\bf H},{\bf e}_2] {\bf A}^{-1} 
\end{array}
\end{equation}

Let us write ${\bf B} = {\bf A}^{-1}$ as follows:
$$
{\bf B} = \left[\begin{array}{ccc}
		 \overline{\bf B} & \vdots & {\bf u} \\
                 \hdots & \hdots & \hdots \\
		 {\bf v}^T & \vdots & \lambda
	\end{array}\right]
$$

One can even find a matrix $A$ such that equations~\eqref{eq::equiv_equs} are mere equality. With the notations of lemma~\ref{lemma::Fn-1}, it is immediate that $\overline{\bf B} =\overline{\bf M}_1$, ${\bf u} = {\bf m}_1$. In addition, we must have:
\begin{eqnarray}
\label{eq::eq1}
\overline{\bf M}_2 & = & {\bf H} \overline{\bf M}_1 + {\bf e}_2 {\bf v}^T \\
\label{eq::eq2}
{\bf m}_2 & = & {\bf H} {\bf m}_1 + \lambda {\bf e}_2
\end{eqnarray}

Since ${\bf H} = [*{\bf e}_2]_\meet [{\bf e}_2]_\join \overline{\bf M}_2 \overline{\bf M}_1^{-1}$ and ${\bf e}_2 \sim - \overline{\bf M}_2 \overline{\bf M}_1^{-1} {\bf m}_1 + {\bf m}_2$, we have: ${\bf H} \overline{\bf M}_1  = [*{\bf e}_2]_\meet [{\bf e}_2]_\join \overline{\bf M}_2$ and ${\bf H} {\bf m}_1 = [*{\bf e}_2]_\meet [{\bf e}_2]_\join ({\bf m}_2 - \mu {\bf e}_2)$ for some $\mu \in \C^*$, which yields ${\bf H} {\bf m}_1 = [*{\bf e}_2]_\meet [{\bf e}_2]_\join {\bf m}_2$. 

Therefore ${\bf Hm}_1$ is the point which lies at the intersection of the hyperplane defined by $*{\bf e}_2$ and the line $\overline{{\bf e}_2 {\bf m}_2}$. Since ${\bf m}_2 \neq {\bf e}_2$, there must exist $\lambda \in \C$, such that equation~\eqref{eq::eq2} holds. The exact value of $\lambda$ is then easily computed: $\lambda = \frac{1}{{\bf e}_2^T{\bf e}_2} ({\bf e}_2^T {\bf m}_2 - {\bf e}_2^T {\bf Hm}_1)$. 

In addition equation~\eqref{eq::eq1}, $\overline{\bf M}_2 = {\bf H} \overline{\bf M}_1 + {\bf e}_2 {\bf v}^T$, is equivalent to $\overline{\bf M}_2 = [*{\bf e}_2]_\meet [{\bf e}_2]_\join \overline{\bf M}_2 + {\bf e}_2 {\bf v}^T$. Hence for all ${\bf x} \in \PP^{n-1}$, we must have: $\overline{\bf M}_2 {\bf x} = [*{\bf e}_2]_\meet [{\bf e}_2]_\join \overline{\bf M}_2 {\bf x}+ {\bf e}_2 {\bf v}^T {\bf x}$. By construction if we write ${\bf q}_2$ for $[*{\bf e}_2]_\meet [{\bf e}_2]_\join \overline{\bf M}_2 {\bf x}$, the point $\overline{\bf M}_2 {\bf x}$ lies on the line $\overline{{\bf e}_2 {\bf q}_2}$. Therefore there exists a function $\alpha({\bf x})$ such that $\overline{\bf M}_2 {\bf x} = {\bf q}_2 + \alpha({\bf x}) {\bf e}_2$. By construction, the function $\alpha$ is linear and therefore there exists a point ${\bf v} \in \PP^{n-1}$, such that $\alpha({\bf x}) = {\bf v}^T {\bf x}$. As for the exact value of ${\bf v}$, we have: ${\bf v}^T = \frac{1}{{\bf e}_2^T{\bf e}_2} {\bf e}_2^T (\overline{\bf M}_2 - {\bf H} \overline{\bf M}_1)$.     
\end{proof}

\begin{corollary}
The knowledge of any fundamental matrix of order $k \leq n-1$ allows the computation of the projections
modulo the action $\varrho$. 
\end{corollary}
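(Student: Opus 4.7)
The plan is to show that any fundamental matrix of order $k \leq n-1$ gives us enough information to construct a canonical representative of the orbit of $({\bf M}_1, {\bf M}_2)$ under $\varrho$, invoking Theorem~\ref{thm::FundEquiv} as the main engine.

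First I would reduce to the case $k = n-1$. If $k = n-1$ we already have ${\bf F}_{n-1}$. Otherwise, by Lemma~\ref{lemma::polynomial_relations_F}, the entries of ${\bf F}_{n-1}$ are polynomial functions of the entries of ${\bf F}_k$, so ${\bf F}_{n-1} = \phi_{k,n-1}({\bf F}_k)$ can be computed explicitly from the given data. Thus in all cases we may assume ${\bf F}_{n-1}$ is available.

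Next, from ${\bf F}_{n-1}$ alone I would recover the two ingredients needed for the canonical form of the pair. The second epipole ${\bf e}_2$ is obtained as the left null vector of ${\bf F}_{n-1}$, using the relation ${\bf F}_{n-1}^T {\bf e}_2 = {\bf 0}$ noted just after Lemma~\ref{lemma::Fn-1}, and uniqueness follows from the rank analysis of Proposition~\ref{prop::space_fund_mat}. The matrix ${\bf H}$ is then assembled exactly as in the proof of Theorem~\ref{thm::FundEquiv}, namely ${\bf H} = [*{\bf e}_2]_\meet {\bf F}_{n-1}$, a closed-form polynomial expression in the entries of ${\bf F}_{n-1}$.

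Finally I would appeal directly to the second assertion of Theorem~\ref{thm::FundEquiv}: the pair $([{\bf I},{\bf 0}], [{\bf H},{\bf e}_2])$ is a representative of the $\varrho$-orbit of the unknown pair $({\bf M}_1, {\bf M}_2)$, since by the first assertion of the same theorem the fundamental matrix is a complete orbit invariant and the constructed pair has reduced fundamental matrix equal to ${\bf F}_{n-1}$ by design. Hence the projections are determined modulo $\varrho$. There is essentially no obstacle here since all the real work was done in Theorem~\ref{thm::FundEquiv} and Lemma~\ref{lemma::polynomial_relations_F}; the only point deserving a brief verification is that the polynomial reduction $\phi_{k,n-1}$ actually extends to the endpoint $l = n-1$, which follows from the same $\overline{\join}$/$\overline{\meet}$ argument used in the proof of Lemma~\ref{lemma::polynomial_relations_F} applied to any decomposition of a point in $i_1(\PP^{n-1})$ as an intersection of codimension-$k$ subspaces.
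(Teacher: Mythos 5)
Your proposal is correct and is exactly the argument the paper intends: the corollary is stated without a separate proof because it follows immediately from combining Lemma~\ref{lemma::polynomial_relations_F} (compute ${\bf F}_{n-1}$ from any ${\bf F}_k$) with the second assertion of Theorem~\ref{thm::FundEquiv} (build the canonical pair $([{\bf I},{\bf 0}],[{\bf H},{\bf e}_2])$ from ${\bf F}_{n-1}$). Your remark about extending $\phi_{k,l}$ to the endpoint $l=n-1$ is a fair point of care, and is covered by the unnumbered lemma preceding Lemma~\ref{lemma::polynomial_relations_F}, which steps from ${\bf F}_k$ to ${\bf F}_{k+1}$ all the way up to $k+1=n-1$.
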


In the sequel we will show how to recover ${\bf F}_2$ from two projections of an algebraic variety.

%===============================
\subsection{Generic projection of an algebraic variety}
\label{sec::generic_projection}

In this section, we shall recall basic facts concerning the generic projection of an algebraic variety in the context of this present work. 

We consider a $n-$dimensional smooth and
irreducible variety $X$ embedded in $\PP^m$ (with $n \leq m-2$). 

The variety of secants of $X$ will be denoted by $\mathfrak{S}(X)$. It is defined as the closure in $\G(1,m)$ of $\{\overline{pq} | p,q \in X, p \neq q\}$, where $\overline{pq}$ is the line joining $p$ and $q$. Then the chordal variety $S(X)$ is defined as the union of lines in $\mathfrak{S}(X)$: $S(X) = \cup_{l \in \mathfrak{S}(X)} l$.     

The dual variety of $X$ is defined as the set of hyperplanes tangent to $X$. An hyperplane is tangent to $X$ when it contains the tangent space to $X$ at some point. The dual variety is a projective variety in the dual space $\PP^{m*}$.

We shall assume some genericity assumptions: 
\begin{enumerate}
\item $X$ is not contained in any hyperplane, 
\item the chordal variety $S(X)$ is not deficient, which means that it has the expected dimension $\dim(S(X)) = \min(2n+1,m)$,
\item the dual variety $X^*$ is not deficient as well, which means that it is an hypersurface in $\PP^{m*}$.
\end{enumerate}

Let $Y$ be the projection of $X$ on a generic hyperplane
through a center of projection, ${\bf O}$. 
\begin{enumerate}
\item The variety $Y$ may contain singularities. More precisely, we can distinguish between the following three cases:
\begin{itemize}
\item When $m > 2n+1$, there is in general no singularities in $Y$.

\item When $m = 2n+1$, for
a generic position of the center of projection, the only singularities of $Y$ will be points with two tangent hyperplanes intersecting transversally, also called normal crossing singularities. In that case, the singular locus of $Y$ is generically a discrete variety. 

This claim follows from the fact that singularities in $Y$ come from multiple points in $X$ projected into the same point. This occurs when the center of projection lies on a secant of $X$. The chordal variety of $X$ has dimension $2 n + 1$  and covers the whole space. Therefore through any center of projection, there must be at least one secant. Consider the incidence correspondence:
$$
\Sigma = \{(l,p) \in \mathfrak{S}(X) \times \PP^{2n+1} \mid p \in l\} \subset \G(1,2n+1) \times \PP^{2n+1}.
$$
Let $\tau_1,\tau_2$ the projections from $\Sigma$ to respectively the first and the second factor. For $l \in \mathfrak{S}(X)$, the fiber $\tau_1^{-1}(l)$ is one-dimensional. Therefore $\dim(\Sigma) = \dim(\mathfrak{S}(X)) + 1 = 2n+1$. Since $\tau_2$ is surjective as already mentioned, its generic fiber is finite.  

\item When $m < 2n+1$, the singular locus of $Y$ presents a natural stratification discussed in~\cite{Gruson-Peskine-2010}.
\end{itemize}

\item If $d$ is the degree of $X$, then we also have $\deg(Y) = d$.

\item The {\it class} of a variety is defined to be the degree of its dual hypersurface. Let $c$ be the class of $Y$. Then $c$ is constant for a generic position of the center of projection. It is actually the class of $X$ too, since in the dual space $Y^*$ is the intersection of $X^*$ with the generic hyperplane defined by the center of projection. The class $c$ is related to the degree and the singularities of $Y$. A general formula can be found in~\cite{Tevelev-05}.
\end{enumerate}

%===============================
\subsection{Gauss Map and Dual Variety}

We will need in the sequel to consider the projected variety $Y$ that is irreducible, but that may be singular. Given the genericity properties of $X$, the variety $Y$ also satisfies the following assumptions:
\begin{enumerate}
\item generically a tangent space to $Y$ is tangent to it at a single point,
\item the dual variety $Y^*$ is also an hypersurface in $\PP^{(m-1)*}$. 
\end{enumerate}

Recall that the Gauss map is a rational map defined on the smooth locus of $Y$, which associates to each smooth point the tangent space: $\gamma: Y \dashrightarrow \G(n,m-1), y \mapsto T_yY$. As mentioned in section~\ref{sec::GC-algebra}, the Grassmannian $\G(n,m-1)$ is embedded in a projective space $\PP^a$ through the P\"ucker embedding. We then consider the closure of the image of $\gamma$, that is $\overline{\gamma(Y)}$ as a subvariety of $\PP^a$. Then we shall need the following result.

\begin{proposition}
\label{prop::same_degrees}
Under the assumptions above, the degree of $\overline{\gamma(Y)}$ equals the degree of $Y^*$.
\end{proposition}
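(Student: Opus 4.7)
The plan is to express both $\deg \overline{\gamma(Y)}$ and $\deg Y^*$ as a common intersection count via a flag-incidence correspondence relating the Gauss image and the dual projective space.

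First I would form the incidence variety $\widetilde{\mathcal{I}} = \{(\Pi, H) \in \Gamma \times \PP^{(m-1)*} : \Pi \subset H\}$, where $\Gamma = \overline{\gamma(Y)}$. As a $\PP^{m-n-2}$-bundle over $\Gamma$, it has dimension $m-2$, matching $\dim Y^*$. Under the standing hypothesis that a generic tangent hyperplane is tangent at a single point, the second projection $\widetilde{\mathcal{I}} \to \PP^{(m-1)*}$ is birational onto its image $Y^*$. Picking a generic pencil $L \subset \PP^{(m-1)*}$ with axis $\Lambda \subset \PP^{m-1}$ of codimension $2$, birationality gives $\deg Y^* = |Y^* \cap L| = |\{(\Pi, H) \in \widetilde{\mathcal{I}} : H \in L\}|$. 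For generic $\Pi \in \Gamma$, at most one hyperplane in $L$ contains $\Pi$, namely $\Pi + \Lambda$ whenever this is a hyperplane, so the count reduces to $|\Gamma \cap Z_\Lambda|$, where $Z_\Lambda \subset \G(n,m-1)$ is the subvariety $\{\Pi : \dim(\Pi \cap \Lambda) \geq n-1\}$, of codimension $n$.

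Next I would translate $Z_\Lambda$ into Plücker coordinates. Writing $\Lambda = V(\alpha) \cap V(\beta)$ for two independent linear forms $\alpha, \beta \in \C^{m*}$, the defining condition of $Z_\Lambda$ is the linear dependence of $\alpha|_{\hat\Pi}$ and $\beta|_{\hat\Pi}$, equivalent to the vanishing of the contraction
\[
(\alpha \wedge \beta) \lrcorner \, [\wedge^{n+1}\hat\Pi] \in \wedge^{n-1} \C^m.
\]
This contraction is linear in the Plücker coordinate $[\wedge^{n+1}\hat\Pi] \in \PP^a$, so $Z_\Lambda = \G(n,m-1) \cap W$ for a specific linear subspace $W \subset \PP^a$. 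Establishing $|\Gamma \cap W| = \deg \Gamma$, the Plücker degree of $\Gamma$ in $\PP^a$, would complete the chain $\deg Y^* = |\Gamma \cap Z_\Lambda| = |\Gamma \cap W| = \deg \Gamma$.

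The main obstacle will be precisely this final identification. Since $W$ has codimension $\binom{m-2}{n-1}$ in $\PP^a$, strictly exceeding $n = \dim \Gamma$ as soon as $n \geq 2$, the intersection $\Gamma \cap W$ is not a generic codimension-$n$ linear section of $\PP^a$, and transversality in $\PP^a$ alone does not recover the Plücker degree. The argument must exploit the coisotropic position of the Gauss image inside the Grassmannian: in the Schubert basis of $H^*(\G)$, the class $[\Gamma]$ should pair trivially with every codimension-$n$ Schubert class except $\sigma_{(1^n)}$, so that via the Pieri expansion of $\sigma_1^n$ one obtains $[\Gamma] \cdot [Z_\Lambda] = [\Gamma] \cdot \sigma_{(1^n)} = [\Gamma] \cdot \sigma_1^n = \deg \Gamma$, closing the loop.
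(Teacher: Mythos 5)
Your first half reproduces the paper's argument: you realize a generic line of $\PP^{(m-1)*}$ as a pencil of hyperplanes with axis $\Lambda$ of codimension $2$, use the single-tangency hypothesis to make the tautological correspondence birational onto $Y^*$, and identify $\deg Y^*$ with the number of points of $\overline{\gamma(Y)} \cap Z_\Lambda$, where $Z_\Lambda$ is the Schubert cycle $\sigma_{(1^n)}$ of $n$-planes meeting $\Lambda$ in dimension at least $n-1$. Up to that point the two arguments coincide and are sound.

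The divergence is in the last step, where your diagnosis is actually sharper than the paper's proof but your proposal does not close the argument. The paper asserts that, since $Z_\Lambda$ is a codimension-$n$ Schubert cycle cut out by a linear space of $\PP^a$, the count $|\overline{\gamma(Y)} \cap Z_\Lambda|$ equals the Pl\"ucker degree; as you correctly observe, that linear space has codimension $\binom{m-2}{n-1} > n$ in $\PP^a$ once $n \geq 2$, so this is not a generic linear section and the identification does not follow. You propose to bridge the gap by showing that $[\overline{\gamma(Y)}]$ pairs trivially with every codimension-$n$ Schubert class except $\sigma_{(1^n)}$, so that $[\overline{\gamma(Y)}]\cdot\sigma_{(1^n)} = [\overline{\gamma(Y)}]\cdot\sigma_1^n$. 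This step is left unproved, and it cannot be proved in this generality: the Schubert coefficients of the effective class $[\overline{\gamma(Y)}]$ are non-negative, so your identity would force all of them except the one dual to $\sigma_{(1^n)}$ to vanish, and they do not. Concretely, for a surface $Y \subset \PP^4$ the number $[\overline{\gamma(Y)}]\cdot\sigma_2$ counts tangent planes through a general point of $\PP^4$, which is positive whenever the tangent variety fills $\PP^4$; for the quartic del Pezzo surface (a complete intersection of two quadrics, which satisfies both standing hypotheses) this number is $4$, the Pl\"ucker degree of the Gauss image is $(K+3H)^2 = 16$, while $\deg Y^* = c_2 + 2K\cdot H + 3H^2 = 12$. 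So the final identification $|\overline{\gamma(Y)} \cap Z_\Lambda| = \deg\overline{\gamma(Y)}$ is not merely unjustified but false in general; what your computation (and the paper's) actually establishes is $\deg Y^* = [\overline{\gamma(Y)}]\cdot\sigma_{(1^n)}$, and equating this with the Pl\"ucker degree requires hypotheses that exclude examples such as the one above.
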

While this result is well known, because of a lack of an explicit reference, we shall provide here a proof of it.

\begin{proof}

The closure $\overline{\gamma(Y)} \subset \G(n, m-1)$ is a variety of dimension at most $n$, and since generically a tangent space to $Y$ is tangent to it at a single point, the Gauss map has degree $1$ and $\dim \overline{\gamma(Y)} = n$. Then the degree of it is the number of intersection points with a generic linear subspace of $\PP^a$ of codimension $n$. 

On the other hand, the degree of the dual variety $\deg Y^*$ is the number of points in $Y^* \cap \ell$, where $\ell \subset \PP^{(m-1)*}$ is a generic line. Parametrize \(\ell\) as a pencil of hyperplanes $\{ H_t \mid t \in \PP^1 \}$, where $H_t = t_0 H_0 + t_1 H_1$, and then $\deg Y^*$ is the number of points $t \in \PP^1$ where $H_t$ is tangent to $Y$ at some $y \in Y_{sm}$. 

Then observe that the intersection of the hyperplanes in this pencil is a linear space $L$ of codimension $2$ in $\PP^{m-1}$. Consider the following Schubert cycle:
$$
\Sigma = \{\Lambda \in \G(n,m-1) \mid \dim(\Lambda \cap L) \geq n-1\}. 
$$
It has codimension $n$ in $\G(n,m-1)$ and after Pl\"ucker embedding, it is a linear subspace. Then the degree of $\overline{\gamma(Y)}$ is the number of intersection points in $\overline{\gamma(Y)} \cap \Sigma$. These points are exactly the tangent spaces contained in hyperplanes from the pencil above. This implies that $\deg(\overline{\gamma(Y)}) = \deg(Y^*)$.  
\end{proof}

%==========================================================
%==========================================================
\section{Recovering the Variety From Two Generic Projections With Unknown Projection Operators}
\label{sec::twoProj}

In this section we finally tackle our problem. We consider a generic $n-$dimensional $X$ with the assumptions mentioned in section~\ref{sec::generic_projection}. Recall that this includes the two following points (i) X is smooth and irreducible, (ii) the dual variety is an hypersurface. 

This variety $X$ is projected onto two generic hyperplanes through two generic points. The projection operators $\pi_1$ and $\pi_2$ are unknown. First we want to recover the second order fundamental
matrix  ${\bf F}_2$ of the couple $(\pi_1,\pi_2)$ from the projected varieties $Y_1 = \pi_1(X)$ and $Y_2 = \pi_2(X)$.

%==============================================
\subsection{The Variety of Fundamental Matrices of Second Order}
\label{sec::variety_of_second_order_F}

Let ${\bf M}_i$, $i=1,2$, be the projection matrices. Let ${\bf F} = {\bf F}_2$ be the fundamental matrix of order $2$. Let ${\bf e}_1$ and ${\bf e}_2$ be the two epipoles. According to section~\ref{sec::projMaps}, such a fundamental matrix has size $m \times m(m-1)/2 = m^2 (m-1)/2$ and has rank $m-1$. The set of such fundamental matrices is a projective variety of dimension $N = N_2 = (m^2-m+1)(m-1)/2$, as shown in proposition~\ref{prop::space_fund_mat}. 

We will need to consider the two following mappings: $\G(m-3,i_1(\PP^{m-1})) \setminus \sigma_{2}({\bf e}_1) \rightarrow \G(m-2,i_1(\PP^{m-1})), {\bf w} \stackrel{\ga}{\mapsto} {\bf e}_1
\join {\bf w}$ and $\G(m-3,i_1(\PP^{m-1})) \rightarrow \G(m-2,i_2(\PP^{m-1})), {\bf w} \stackrel{\xi}{\mapsto} {\bf Fw}$. 

Let $Y_1^\star$ and $Y_2^\star$ denote the dual varieties of respectively $Y_1$ and $Y_2$, which are hypersurfaces (according to the non-deficiency assumption from section~\ref{sec::generic_projection}) and whose polynomials are respectively $\phi_1$ and $\phi_2$. 

\begin{theorem} \label{theoKrupeq}  
For a generic position of the centers of projections with respect to the variety $X$, there exists a non-zero scalar $\lambda$, such that for all
points ${\bf w}$ in $\G(m-3,i_1(\PP^{2n}))$, the following equality holds:  
\begin{equation}
\phi_2(\xi({\bf w})) = \lambda \phi_1(\gamma({\bf w}))
\label{ExtKrupEq}
\end{equation}
\end{theorem}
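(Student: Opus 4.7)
The strategy is to interpret both sides of~\eqref{ExtKrupEq} as pullbacks of the defining polynomials of $Y_1^\star$ and $Y_2^\star$ along two linear maps whose images parametrize one and the same pencil of epipolar hyperplanes of $\PP^m$, and then to conclude by divisor uniqueness on the Grassmannian.

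First I would establish the geometric dictionary. For ${\bf w} \in \G(m-3, i_1(\PP^{m-1}))$ in general position, $\pi_1^{-1}({\bf w})$ is an $(m-2)$-plane of $\PP^m$ through ${\bf O}_1$, and the hyperplane
$$
H({\bf w}) \;=\; \pi_1^{-1}(\gamma({\bf w})) \;=\; \overline{{\bf O}_1{\bf O}_2} \,\overline{\join}\, \pi_1^{-1}({\bf w})
$$
contains both centers, because $\pi_1^{-1}({\bf e}_1) = \overline{{\bf O}_1 {\bf O}_2}$. Projecting through ${\bf O}_2$ gives $\pi_2(H({\bf w})) = {\bf e}_2 \join \xi({\bf w}) = \xi({\bf w})$ by the very definition of ${\bf F}_2$. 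Thus $\gamma({\bf w})$ and $\xi({\bf w})$ are precisely the images, under $\pi_1$ and $\pi_2$, of the single epipolar hyperplane $H({\bf w})$.

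Next I would translate tangency. Under the genericity hypotheses of section~\ref{sec::generic_projection} the maps $\pi_i$ are immersions on $X_{sm}$, and for $x \in X_{sm}$ with $y_i = \pi_i(x) \in Y_{i,sm}$ one has $\pi_i^{-1}(T_{y_i} Y_i) = {\bf O}_i \,\overline{\join}\, T_x X$. Since $H({\bf w})$ already contains both centers, this produces the chain of equivalences
$$
\gamma({\bf w}) \in Y_1^\star \;\iff\; H({\bf w}) \in X^\star \;\iff\; \xi({\bf w}) \in Y_2^\star,
$$
after taking Zariski closure over $x \in X_{sm}$ to absorb the singular strata of the $Y_i$. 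Hence $\phi_1 \circ \gamma$ and $\phi_2 \circ \xi$ have the same zero set in $\G(m-3, i_1(\PP^{m-1}))$.

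To upgrade equal zero sets to proportional polynomials, I would observe that both compositions are global sections of $\mathcal{O}_{\G}(c)$ on the Pl\"ucker-embedded Grassmannian, with $c$ the common class of $X$, $Y_1$, $Y_2$ (cf.\ proposition~\ref{prop::same_degrees}), since $\gamma$ and $\xi$ are linear in Pl\"ucker coordinates. Let $\Pi_1 \subset \PP^{(m-1)*}$ denote the hyperplane of hyperplanes through ${\bf e}_1$. For generic ${\bf e}_1$, Bertini gives that $Y_1^\star \cap \Pi_1$ is an irreducible hypersurface of $\Pi_1$ of degree $c$, and its preimage under the dominant linear morphism $\gamma$ with connected linear fibers remains an irreducible hypersurface of the Grassmannian. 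Both $\phi_1 \circ \gamma$ and $\phi_2 \circ \xi$ vanish along it with multiplicity one, so their ratio is a regular nowhere-vanishing function and hence a nonzero constant $\lambda$. The main obstacle I anticipate is multiplicity bookkeeping: the indeterminacy locus $\sigma_2({\bf e}_1)$ of $\gamma$ has codimension $m-1 \geq 2$ in the Grassmannian and thus cannot contribute a divisorial component, while the Gauss map of $X$ having degree one, together with the generic position of ${\bf O}_1$ and ${\bf O}_2$, secures simple vanishing along the common epipolar tangency divisor, closing the proportionality argument.
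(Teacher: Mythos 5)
Your proof follows essentially the same geometric route as the paper's: both rest on the fact that $\gamma({\bf w})$ and $\xi({\bf w})$ are the two projections of a single hyperplane of $\PP^m$ containing the baseline $L$, so that the zero loci of $\phi_1\circ\gamma$ and $\phi_2\circ\xi$ coincide with the locus of ${\bf w}$ spanning, together with ${\bf e}_1$, an epipolar tangent hyperplane of $Y_1$. Where you genuinely differ is the passage from equal zero sets to proportional polynomials. The paper matches multiplicities directly: the pencils $\sigma({\bf L})$, $\sigma({\bf e}_1)$, $\sigma({\bf e}_2)$ are linearly isomorphic, and the multiplicity of an epipolar tangent hyperplane as a point of $Y_i^\star$ equals that of the corresponding hyperplane through $L$ as a tangent hyperplane of $X$, so the two divisors agree component by component and uniformly in $m$. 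You instead use Bertini to make the tangency divisor irreducible and then claim order-one vanishing on both sides.

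Two caveats about that finish. First, Bertini irreducibility of $Y_1^\star\cap\Pi_1$ requires $\dim Y_1^\star\geq 2$, i.e.\ $m\geq 4$; for $m=3$ (curves in $\PP^3$, which the hypotheses allow) the section is a finite set of points and your divisor is a union of $c$ distinct linear components, so the irreducibility step fails verbatim. Both this and your under-justified assertion that $\phi_2\circ\xi$ also vanishes to order one (the Gauss map having degree one does not by itself control the order of vanishing of $\phi_2$ along the pulled-back divisor) are repaired by one degree count: for generic ${\bf e}_1$ the section $Y_1^\star\cap\Pi_1$ is reduced, so each component of the common zero set occurs with multiplicity one in $\phi_1\circ\gamma$, and since $\phi_2\circ\xi$ has the same total degree $c$ and the same support, its multiplicities are forced to agree. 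Second, a small slip: $\sigma_2({\bf e}_1)$ has codimension $2$ in $\G(m-3,i_1(\PP^{m-1}))$, not $m-1$; fortunately codimension $2$ is exactly what you need to discard it as a divisorial component.
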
 

This equation generalizes the so-called Kruppa equation that has been initially introduced for $n=1$ and $m=3$ with $X$ being a conic, as mentioned in section~\ref{sec::history}, and has already been generalized to any smooth irreducible algebraic curve in $\PP^3$ in~\cite{Kaminski-04}. For these reasons, we shall call this equation, the {\it generalized Kruppa equation}.

\begin{proof} 
Let $\ep_i$ be the set of hyperplanes containing ${\bf e}_i$ and tangent to $Y_i$ (at regular points). 

We start with the following lemma: 
\begin{lemma}
The two sets $\ep_1$ and $\ep_2$ are projectively equivalent. Moreover
for each corresponding pair of epipolar hyperplanes $({\bf H},{\bf H}') \in
\ep_1 \times \ep_2$, the multiplicities of ${\bf H}$ and ${\bf H}'$ as
points of $Y_1^*$ and $Y_2^*$ are the same.  
\end{lemma}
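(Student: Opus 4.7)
The plan is to identify both $\varepsilon_1$ and $\varepsilon_2$ with a single set of hyperplanes in $\PP^m$, namely those containing the baseline $L = \overline{{\bf O}_1 {\bf O}_2}$ and tangent to $X$. Once this identification is in place, the projective equivalence is obtained by composing the two identifications, and the equality of multiplicities will follow from the fact that each $Y_j^{*}$ is a generic hyperplane section of $X^{*}$.

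I would start by lifting: to an $(m-2)$-plane ${\bf H} \subset i_j(\PP^{m-1})$ associate the cone $\tilde{\bf H} = {\bf O}_j \join {\bf H}$, which is a hyperplane of $\PP^m$ containing ${\bf O}_j$ and cutting $i_j(\PP^{m-1})$ back in ${\bf H}$. The condition ${\bf e}_j \in {\bf H}$ is equivalent to $L \subset \tilde{\bf H}$. Writing $\mathcal{E}_j$ for the pencil of hyperplanes in $i_j(\PP^{m-1})$ through ${\bf e}_j$ and $\mathcal{P}$ for the pencil of hyperplanes in $\PP^m$ through $L$, the lift ${\bf H} \mapsto \tilde{\bf H}$ is a projective linear isomorphism $\mathcal{E}_j \cong \mathcal{P}$, and composing the two identifications yields a projective isomorphism $\psi : \mathcal{E}_1 \to \mathcal{E}_2$.

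Next I would verify that $\psi$ carries $\varepsilon_1$ onto $\varepsilon_2$. If $P \in X$ is smooth and $p = \pi_j(P)$ is smooth on $Y_j$, then $T_p Y_j = d\pi_j(T_P X)$; since $\pi_j$ projects from ${\bf O}_j$, one has $T_P X \subset {\bf O}_j \join T_p Y_j$. Hence ${\bf H}$ contains $T_p Y_j$ iff $\tilde{\bf H}$ contains $T_P X$, i.e.\ iff $\tilde{\bf H}$ is tangent to $X$ at $P$. Under the lift, both $\varepsilon_1$ and $\varepsilon_2$ are therefore identified with the same subset $\mathcal{T} \subset \mathcal{P}$: the hyperplanes in $\PP^m$ containing $L$ and tangent to $X$. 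This yields the claimed projective equivalence.

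For the multiplicities, I would dualize. Write $H_{{\bf O}_j} = \{{\bf K} \in \PP^{m*} : {\bf O}_j \in {\bf K}\}$. As recalled just before Proposition~\ref{prop::same_degrees}, $Y_j^{*}$ is nothing but $X^{*} \cap H_{{\bf O}_j}$ via the lift, so $\mathrm{mult}_{\bf H}(Y_j^{*}) = \mathrm{mult}_{\tilde{\bf H}}(X^{*} \cap H_{{\bf O}_j})$. Since the tangent cone of $X^{*} \cap H_{{\bf O}_j}$ at $\tilde{\bf H}$ is the hyperplane section by $H_{{\bf O}_j}$ of the tangent cone of $X^{*}$ at $\tilde{\bf H}$, a generic position of the centers ensures that no irreducible component of this tangent cone is contained in $H_{{\bf O}_j}$, whence $\mathrm{mult}_{\tilde{\bf H}}(X^{*} \cap H_{{\bf O}_j}) = \mathrm{mult}_{\tilde{\bf H}}(X^{*})$. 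As this common value does not depend on $j$, the multiplicities of ${\bf H}$ in $Y_1^{*}$ and of ${\bf H}'$ in $Y_2^{*}$ coincide. The main technical point, and the one where genericity of the centers is actually used, is precisely this last transversality claim, which must hold simultaneously at every $\tilde{\bf H} \in \mathcal{T}$, even across the singular locus of $X^{*}$.
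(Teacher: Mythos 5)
Your proof follows essentially the same route as the paper's: both identify $\ep_1$ and $\ep_2$ with the single set of hyperplanes of $\PP^m$ containing the baseline $L$ and tangent to $X$, via the lift through the respective centers, and deduce both the projective equivalence and the preservation of multiplicities from that identification. The only difference is that you spell out the multiplicity claim (via $Y_j^{*} = X^{*}\cap H_{{\bf O}_j}$ and a tangent-cone transversality argument) where the paper simply asserts it, which is a welcome addition rather than a different approach.
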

\begin{proof}
Consider the following three pencils:
\begin{itemize}
\item $\si({\bf L}) \approx \PP^{m-2}$, the pencil of hyperplanes, containing
the baseline joining the two centers of projection, 
\item $\si({\bf e}_1) \approx \PP^{m-2}$, the pencil of epipolar hyperplanes in
the first projection space $i_1(\PP^{m-1})$, 
\item $\si({\bf e}_2) \approx \PP^{m-2}$, the pencil of epipolar hyperplanes in
the second projection space $i_2(\PP^{m-1})$. 
\end{itemize}
Thus we have $\ep_i \subset \si({\bf e}_i)$. Moreover if $E$ is the
set of hyperplanes in $\si({\bf L})$ tangent to the variety $X$ in $\PP^{m}$, then
there exists a one-to-one mapping between $E$ and each $\ep_i$ which
leaves the multiplicities unchanged. Indeed the multiplicity of tangency to $X$ of a hyperplane in $E$ is the same as the multiplicity of tangency to $Y_i$ of the corresponding hyperplane in $\ep_i$. This completes the proof. 
\end{proof}

This lemma implies that both sides of equation~\eqref{ExtKrupEq} define
the same algebraic set, that is the union of epipolar hyperplanes
tangent to $Y_1$. The previous lemma also implies that each such hyperplane appears with the same multiplicity in $\phi_1$ and $\phi_2$. 
\end{proof}

By eliminating the scalar $\lambda$ from the generalized Kruppa
equation (\ref{ExtKrupEq}) we obtain a set of 
bi-homogeneous equations in ${\bf e}_1$ and ${\bf F}$. Hence they define a variety in $\PP^{m-1} \times \PP^{N}$. We turn our attention to the dimensional analysis of this variety. Our concern is to exhibit the conditions for which this variety is discrete.

%==============================
\subsection{Dimensional Analysis}
\label{sec::dimensionAnalysis}

Let $\{E_i({\bf F}, {\bf e}_1)\}_i$ be the set of bi-homogeneous
equations on ${\bf F}$ and ${\bf e}_1$, extracted from the generalized
Kruppa equation (\ref{ExtKrupEq}). 

The purpose of this section is to determine the conditions under which this system of equations defines a discrete variety. Indeed a finite number of solutions is equivalent to the computation of the pair of projections $(\pi_1,\pi_2)$ modulo the projective group of $\PP^m$ up to a finite-fold ambiguity.

Our first
concern is to determine whether all solutions of equation
(\ref{ExtKrupEq}) are admissible, that is whether they satisfy the constraint ${\bf F}{\bf w}=0$ if and only the extensor ${\bf w}$ represents a $(m-3)-$plane in $i_1(\PP^{m-1})$ containing the first epipole ${\bf e}_1$. Indeed we prove the following statement: 

\begin{proposition}
\label{prop::valid_solutions}
As long as there are at least $m-1$ distinct hyperplanes in $i_1(\PP^{m-1})$, $W_1, \cdots, W_{m-1}$ through ${\bf e}_1$ tangent to $Y_1$, and that generate the space of all hyperplanes containing ${\bf e}_1$,  equation (\ref{ExtKrupEq}) implies that $\rank({\bf
F})=m-1$ and ${\bf F}{\bf w}={\bf 0}$ for any extensor ${\bf w}$ representing a $(m-3)-$plane passing through ${\bf e}_1$, or more consicely:
$$
\ker({\bf F}) = \{{\bf w} \in \join^{m-3+1} \C^{m} | {\bf e}_1 \join {\bf w} = 0, {\bf Fw} = 0\}. 
$$
\end{proposition}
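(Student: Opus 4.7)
The plan is to extract both conclusions---$\rank({\bf F})=m-1$ and $\ker({\bf F})$ being the span of the $(m-3)$-plane extensors through ${\bf e}_1$---simultaneously from a single dimension count obtained by differentiating (\ref{ExtKrupEq}). Set $V := \join^{m-2}\C^m$, so $\dim V=\tfrac{m(m-1)}{2}$, and $S := \{{\bf w}\in V : {\bf e}_1\join{\bf w}=0\}$, so $\dim S=\tfrac{(m-1)(m-2)}{2}$; wedging with ${\bf e}_1$ identifies $V/S$ with the pencil $\sigma$ of hyperplanes through ${\bf e}_1$ in $i_1(\PP^{m-1})$. I would first reformulate the spanning hypothesis into the assertion that $\phi_1|_\sigma$ has \emph{trivial core} (the core of a homogeneous polynomial $P$ being $\operatorname{core}(P):=\{{\bf v}:\partial_{\bf v}P\equiv 0\}$): the $m-1$ linearly independent tangent hyperplanes in the zero locus of $\phi_1|_\sigma$ force that locus to span $\sigma$, which under the standing genericity amounts to $\operatorname{core}(\phi_1|_\sigma)=0$. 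The non-degeneracy of $Y_2$ yields, analogously, $\operatorname{core}(\phi_2)=0$ on $\join^{m-1}\C^m\simeq\C^m$.

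Next I would differentiate (\ref{ExtKrupEq}) in an arbitrary direction ${\bf d}\in V$:
\[
\phi_2'({\bf F}{\bf w})({\bf F}{\bf d}) \;=\; \lambda\,\phi_1'({\bf e}_1\join{\bf w})({\bf e}_1\join{\bf d}), \qquad \forall\,{\bf w}\in V.
\]
Define the linear map $\Psi:V\to\C[V]$ by $\Psi({\bf d})({\bf w}):=\phi_2'({\bf F}{\bf w})({\bf F}{\bf d})$. If ${\bf d}\in S$ the right-hand side vanishes identically, so $S\subseteq\ker\Psi$. If ${\bf d}\notin S$ then ${\bf e}_1\join{\bf d}$ is a non-zero element of $\sigma$, and triviality of $\operatorname{core}(\phi_1|_\sigma)$ forces $\phi_1'(\cdot)({\bf e}_1\join{\bf d})\not\equiv 0$, hence $\Psi({\bf d})\neq 0$; therefore $\ker\Psi=S$. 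On the other hand, unwinding the definition gives $\ker\Psi = {\bf F}^{-1}(\operatorname{core}(\phi_2|_U))$ with $U:=\operatorname{im}{\bf F}$, so surjectivity of ${\bf F}:V\twoheadrightarrow U$ yields
\[
\dim S \;=\; \dim\ker{\bf F} + \dim\operatorname{core}(\phi_2|_U),
\]
equivalently $\rank({\bf F}) - \dim\operatorname{core}(\phi_2|_U) = m-1$.

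The conclusion now drops out. Since $\rank({\bf F})\leq m$, and $\rank({\bf F})=m$ would force $U=\C^m$ and hence $\operatorname{core}(\phi_2|_U)=\operatorname{core}(\phi_2)=0$, contradicting the relation above, we must have $\rank({\bf F})=m-1$ and $\operatorname{core}(\phi_2|_U)=0$. Then $\ker{\bf F}\subseteq {\bf F}^{-1}(\operatorname{core}(\phi_2|_U))=S$, and $\dim\ker{\bf F}=\dim S$ forces $\ker{\bf F}=S$; since the decomposable $(m-3)$-plane extensors through ${\bf e}_1$ linearly span $S$, this is exactly the claimed characterisation. The hard step---and the only one in which the spanning hypothesis is genuinely used---is the implication ${\bf d}\notin S\Rightarrow\Psi({\bf d})\neq 0$, which rests on $\operatorname{core}(\phi_1|_\sigma)=0$; without this input, $\ker\Psi$ would strictly exceed $S$, the dimension count inflates, and neither the rank nor the precise shape of $\ker({\bf F})$ could be pinned down.
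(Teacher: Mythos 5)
Your argument takes a genuinely different route from the paper's. The paper compares the qualitative structure of the two zero loci $\{\phi_1(\gamma({\bf w}))=0\}$ and $\{\phi_2(\xi({\bf w}))=0\}$ (counting components coming from the tangent hyperplanes $W_i$ versus the irreducible cone over $Y_2^*$, then a separate argument for the low-rank case), whereas you differentiate the Kruppa identity and run a single dimension count on directions of constancy. The identity $\rank({\bf F})-\dim\operatorname{core}(\phi_2|_{\im {\bf F}})=m-1$ is attractive because it delivers the rank and the kernel in one stroke, and excluding $\rank({\bf F})=m$ via $\operatorname{core}(\phi_2)=0$ (non-degeneracy of $Y_2$) is a clean replacement for the paper's irreducibility argument.

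However, there is a genuine gap at exactly the step you flag as the crucial one. The proposition's hypothesis is that the zero locus of $\phi_1|_\sigma$ contains $m-1$ linearly independent points, i.e.\ that it spans $\sigma\cong\PP^{m-2}$; your proof needs the strictly stronger statement $\operatorname{core}(\phi_1|_\sigma)=0$, i.e.\ that $Y_1^*\cap\sigma$ is not a cone. A cone can contain a spanning set: for $m=4$ take $\phi_1|_\sigma$ a product of $c\geq 3$ distinct linear forms with a common zero, whose locus is $c$ concurrent lines in $\PP^2$ --- it contains $m-1=3$ independent points, so the hypothesis holds, yet the core is one-dimensional, $\ker\Psi$ strictly exceeds $S$, and your count no longer pins down $\rank({\bf F})$ or $\ker({\bf F})$. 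The appeal to ``standing genericity'' cannot close this, because the proposition must hold for \emph{every} solution $({\bf e}_1,{\bf F})$ of the system subject only to the stated spanning condition; ${\bf e}_1$ is an unknown, not necessarily the true epipole, so you may not assume $\sigma$ is in general position with respect to $Y_1^*$. (Only for $m=3$, where $\sigma\cong\PP^1$ and both ``spanning'' and ``not a cone'' reduce to ``at least two distinct roots,'' do the two conditions coincide.) A secondary issue: you differentiate \eqref{ExtKrupEq} as a polynomial identity on all of $V=\join^{m-2}\C^m$, but the theorem asserts it only on the Grassmannian of decomposable extensors, which for $m\geq 4$ is a proper subvariety of $V$; the differentiated identity is then only available at decomposable ${\bf w}$ in directions ${\bf d}$ tangent to the cone over the Grassmannian, and the inclusion $S\subseteq\ker\Psi$ --- the half of the argument that ultimately yields ${\bf F}{\bf w}={\bf 0}$ for ${\bf w}$ through ${\bf e}_1$ --- is not established for the remaining directions.
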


{\textit Remarks:} 

\begin{enumerate}
\item Given a point ${\bf p} \in Y_1$, the set of hyperplanes containing ${\bf e}_1$ and ${\bf p}$ that are tangent to $Y_1$ at ${\bf p}$ is a linear subspace of the dual space $\PP^{(m-1)*}$ of dimension $m-n-1$ if ${\bf e}_1$ lies in the tangent space $T_{\bf p}(Y_1)$ of $Y_1$ at ${\bf p}$ and otherwise of dimension $m-n-2$. 

\item The number of distinct tangency points to $Y_1$ of hyperplanes passing through ${\bf e}_1$ is generically equal to the class $c$ of $Y_1$. Therefore the condition mentioned in proposition~\ref{prop::valid_solutions} reads to say that $c \geq m-1$.  
\end{enumerate}

\begin{proof}
Consider an extensor ${\bf u}$ that defines a subspace $U$ of dimension $m-3$ included in any of the hyerplanes $W_1, \cdots, W_{m-1}$. If $U$ contains ${\bf e}_1$, then $\gamma(\bf u) = 0$ and otherwise $\gamma({\bf u})$ represents one of the hyperplanes $W_i$. In both cases, $\phi_1(\gamma({\bf u}))= 0$.

Therefore the variety defined by the equation $\phi_1(\gamma({\bf w})) = 0$ must contain all extensors that represent $(m-3)-$planes contained in one of the hyperplanes $W_1, \cdots, W_{m-1}$. 

This implies it has at least $m-1$ irreducible components, the dimensions of which can be computed relying on the first remark above. If equation (\ref{ExtKrupEq}) holds, $\phi_2(\xi({\bf w}))$ must define the same variety. 

There are two cases to exclude. First if $\rank({\bf F})=m$, then  ${\bf w} \mapsto \xi({\bf w})$ defines a surjective morphism onto the dual space of $i_2(\PP^{m-1})$, every fiber of which is linearly isomorphic to $\ker({\bf F})$. More precisely, this defines an isomorphism $\join^{m-2} \C^m \iso \PP^{(m-1)*} \times \ker({\bf F})$. As a consequence the variety defined by $\phi_2(\xi({\bf w})) = 0$ is isomorphic to the product of the dual variety of $Y_2$ with the kernel of ${\bf F}$, that is $Y_2^* \times \ker({\bf F})$. In particular, it is irreducible and cannot be identical to the variety defined by $\phi_1(\gamma({\bf w}))$.

If $\rank({\bf F})<m-1$ or $\rank({\bf F})=m-1$ and ${\bf F}{\bf u} \neq {\bf
0}$ for some extensor ${\bf u}$ representing a $(m-3)-$plane passing through ${\bf e}_1$, then there is some extensor ${\bf a} \in \G(m-3,i_1(\PP^{m-1}))$, not containing ${\bf e}_1$, such that ${\bf Fa}={\bf 0}$.  Then the variety defined by $\phi_2(\xi({\bf w}))$ is made of points in $\G(m-3,i_1(\PP^{m-1}))$ that define spaces intersecting the space $A$ represented by ${\bf a}$ along a subspace of dimension $m-4$. Indeed let ${\bf w} \not \in \ker({\bf F})$ such that $\phi_2(\xi({\bf w})) = 0$. Then for any ${\bf h} = {\bf w} + \lambda {\bf a}$, $\phi_2(\xi({\bf h})) = 0$ holds, since ${\bf Fh} = {\bf Fw}$. Such an element ${\bf h}$ will be decomposable if there exists ${\bf u}_1, \cdots {\bf u}_{m-3}$ such that ${\bf h} = {\bf u}_1 \join \cdots \join {\bf u}_{m-3}$. For this to be, the following equation must be satisfied: ${\bf u}_i \join {\bf w} = - \lambda {\bf u}_i \join {\bf a}$ for all $i$. This implies that the spaces $W$ and $A$ respectively represented by ${\bf w}$ and ${\bf a}$ lie in some hyperplane (since for $\lambda \neq 0$, there must be at least one ${\bf u}_i$, such that ${\bf u}_i \join {\bf w} = - \lambda {\bf u}_i \join {\bf a} \neq {\bf 0}$ (otherwise ${\bf w} = {\bf a}$ and ${\bf w} \in \ker({\bf F})$)). In particular, they must intersect along a subspace of dimension at least $m-4$, and their sum $A+W$ defines a hyperplane. Therefore the hyperplanes ${\bf u}_i \join {\bf w}$ for $1 \leq i \leq m-3$ must all be identical, since $W \neq A$ in general. Therefore the variety defined by $\phi_2(\xi({\bf w})) = 0$ is made of extensors representing spaces that intersect $A$ along $(m-4)-$planes.   
Consequently, this variety cannot be equal to the variety defined by $\phi_1(\gamma({\bf w})) = 0$.  
\end{proof}

\medskip

As a result, in a generic situation every solution of $\{E_i({\bf F},
{\bf e}_1)\}_i$ is admissible.  In the sequel, we shall consider the matrix $[{\bf e}_1]$ which represents the map ${\bf w} \mapsto {\bf e}_1 \join {\bf w}$ from $\join^{m-3} \C^m$ to $\join^{m-2} \C^m$. Therefore ${\bf F} [{\bf e}_1] = 0$. 

In addition, since $\rank({\bf F}) = m-1$ and the codomain of ${\bf F}$ is $\G(m-2,i_2(\PP^{m-1})) \iso \PP^{(m-1)*}$ (the dual space of $\PP^{m-1}$), the kernel of ${\bf F}^T$ is one-dimensional. Also observe that $\C {\bf e}_2$ is within the annihilator of $\im({\bf F})$: ${\bf e}_2^T {\bf F w} = 0$ for any ${\bf w} \in \G(m-3,i_1(\PP^{m-1}))$ since ${\bf e}_2$ lies in the hyperplane defined by ${\bf F w}$. Therefore, $\ker({\bf F}^T) = \C {\bf e}_2$. Therefore we add to our system of equations the following equality ${\bf F}^T {\bf e}_2 = {\bf 0}$. Hence, we have now a projective variety $V$ embedded in $\PP^{m-1}  \times \PP^N \times \PP^{m-1}$ defined by the following set of equations:
$$
\begin{array}{ccc}
{\bf F}[{\bf e}_1] = {\bf 0}, & \{E_i({\bf F},{\bf e}_1)\}, & {\bf F}^T {\bf e}_2 = {\bf 0}.
\end{array}
$$
We next compute a lower bound on the dimension of $V$, after which we will be ready for the calculation itself. 

Recall that $c$ is the class of $X$, which is also the class of $Y_1$ and $Y_2$ for generic projections. Also recall that $N$ is defined at the beginning of section~\ref{sec::variety_of_second_order_F}, and denotes the dimension of the space of second order fundamental matrices.

\medskip

\begin{proposition} \label{prop:lowerbound}
If $V$ is non-empty, the dimension of $V$ is at least $N-\binom{m-2+c}{c} + 1$.  
\end{proposition}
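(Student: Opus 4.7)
The ambient variety $\PP^{m-1}\times\PP^N\times\PP^{m-1}$ has dimension $N+2(m-1)$, so my plan is to apply Krull's principal ideal theorem after bounding from above the number of scalar equations that cut out $V$ inside this ambient. The argument naturally splits into a count for the Kruppa system and a count for the two matrix constraints.

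I would first analyze the Kruppa system $\{E_i({\bf F},{\bf e}_1)\}$. At any point of $V$, Proposition~\ref{prop::valid_solutions} forces $\rank {\bf F} = m-1$ together with $\ker\xi = \im[{\bf e}_1\wedge -] = \ker\gamma$, both of dimension $\binom{m-1}{2}$. Consequently $\phi_2\circ\xi$ and $\phi_1\circ\gamma$ descend to homogeneous polynomials of degree $c$ on the common $(m-1)$-dimensional quotient of $\join^{m-2}\C^m$. Since the vector space of such polynomials has dimension $\binom{m-2+c}{c}$, eliminating the scalar $\lambda$ in equation~\eqref{ExtKrupEq} reduces the Kruppa constraint to the equality of two points in $\PP^{\binom{m-2+c}{c}-1}$, i.e.\ at most $\binom{m-2+c}{c}-1$ independent scalar equations.

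Next I would treat the matrix constraints. The equation ${\bf F}^T{\bf e}_2={\bf 0}$ specifies that ${\bf e}_2$ spans the one-dimensional left kernel of ${\bf F}$; on $\PP^N\times\PP^{m-1}$ this is the codimension-$(m-1)$ condition of selecting a single point of $\PP^{m-1}$ from ${\bf F}$. Symmetrically, once the rank and kernel structure of ${\bf F}$ have been fixed by the Kruppa equations, ${\bf F}[{\bf e}_1]={\bf 0}$ pins down ${\bf e}_1$ from ${\bf F}$ up to a finite ambiguity, contributing another $m-1$ effective equations. Together these matrix constraints are thus accounted for by $2(m-1)$ scalar conditions on the ambient. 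Combining with the Kruppa count and applying Krull's principal ideal theorem, every irreducible component of the non-empty $V$ has dimension at least
\[
(N+2(m-1)) - \bigl(2m-3 + \tbinom{m-2+c}{c}\bigr) = N+1-\binom{m-2+c}{c}.
\]

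The main obstacle I expect lies in justifying the ``effective'' count of $m-1$ for each matrix condition: a priori ${\bf F}[{\bf e}_1]={\bf 0}$ has $m\cdot\binom{m}{m-3}$ scalar components and ${\bf F}^T{\bf e}_2={\bf 0}$ has $\binom{m}{2}$ components, far more than $m-1$ each. Making this reduction rigorous requires exploiting the rank-$(m-1)$ structure of ${\bf F}$ already built into $\PP^N$ together with the kernel condition implied by the Kruppa equation; concretely I would argue that at a smooth point of $V$ only $m-1$ of each set of scalar equations are functionally independent, for instance by selecting locally a suitable collection of $m-1$ rows whose vanishing already forces the full matrix equation on the component under consideration, thereby validating the Krull bound stated above.
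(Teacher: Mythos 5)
Your count for the Kruppa system is sound and agrees with the paper's: the paper restricts ${\bf w}$ to the $(m-3)$-planes of a fixed hyperplane $H\subset i_1(\PP^{m-1})$ not containing ${\bf e}_1$, which is just a concrete splitting of the quotient you describe, and both routes yield $\binom{m-2+c}{c}-1$ conditions after eliminating $\lambda$. The divergence — and the gap — is in how the two matrix constraints are handled. The paper does not count them as equations at all: it asserts that the solution set of ${\bf F}[{\bf e}_1]={\bf 0}$, ${\bf F}^T{\bf e}_2={\bf 0}$ is an irreducible variety of dimension $N$ and applies the Krull bound only to the Kruppa conditions on that irreducible base. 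You instead start from the full product and must justify that the matrix constraints carve out a codimension-$2(m-1)$ locus cut by that many equations, and this is exactly where the argument breaks for $m\geq 4$.

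Concretely, ${\bf F}[{\bf e}_1]={\bf 0}$ says that ${\bf F}$ annihilates $\im([{\bf e}_1])$, which is a subspace of $\join^{m-2}\C^m$ of dimension $\binom{m-1}{2}$; on the rank-$(m-1)$ locus of $\PP^N$, where $\ker({\bf F})$ also has dimension $\binom{m}{2}-(m-1)=\binom{m-1}{2}$, the condition is the equality of these two subspaces. Fibering over ${\bf e}_1$, the matrices satisfying it form a $\PP^{m(m-1)-1}$, so the incidence locus in $\PP^{m-1}\times\PP^N$ has dimension $(m-1)+m(m-1)-1=m^2-2$, i.e.\ codimension $(m-1)\binom{m-1}{2}$, which equals $m-1$ only when $m=3$ (for $m=4$ it is $9$, not $3$). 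Your proposed repair — selecting $m-1$ components whose vanishing forces the full matrix equation locally — cannot succeed: by Krull, every component of the zero locus of any $m-1$ functions on the irreducible $(N+m-1)$-dimensional variety $\PP^{m-1}\times\PP^N$ has dimension at least $N$, whereas the locus you need to cut out has dimension $m^2-2<N$ for $m\geq 4$, so no such subcollection defines the same set even locally near any of its points. To salvage your route you would have to compute the dimension of the incidence variety $I=\{({\bf e}_1,{\bf F},{\bf e}_2)\,:\,{\bf F}[{\bf e}_1]={\bf 0},\ {\bf F}^T{\bf e}_2={\bf 0}\}$ directly by the fibration above (as the paper implicitly does when it declares $I$ irreducible of dimension $N$) rather than by an effective-equation count; note that the honest fibration computation gives $\dim I=m^2-2$, which coincides with the paper's $N$ only in the classical case $m=3$, so the discrepancy you would uncover is worth flagging rather than papering over.
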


\begin{proof}
Choose any hyperplane $H$ included in $i_1(\PP^{m-1})$ and take ${\bf e}_1$ to be in the
affine open set $i_1(\PP^{m-1}) \setminus H$, by choosing the center of projections in a smaller open set. Since ${\bf F}[{\bf e}_1]={\bf 0}$, the two
sides of equation (\ref{ExtKrupEq}) are both unchanged by replacing
${\bf w}$ by ${\bf w}+\alpha ({\bf e}_1 \join {\bf \overline{w}})$, where $\alpha \in \C$ and ${\bf \overline{w}} \in \join^{m-3} \C^m$ if $m \geq 4$ (otherwise ${\bf \overline{w}} = 1$).  So equation (\ref{ExtKrupEq})
will hold for all ${\bf w}$ if it holds for every ${\bf w}$ that represents a $(m-3)-$subspace of $H$. Indeed assume it holds for all $(m-3)-$subspaces of $H$. Let ${\bf w} \in \G(m-3,i_1(\PP^{m-1}))$ representing a space $W$ not containing ${\bf e}_1$. Then let $K$ be the $(m-3)-$plane in $H$ which is the trace on $H$ of the cone generated by $W$ and ${\bf e}_1$. Let ${\bf k}$ be the extensor representing $K$. Then by construction there is some $\beta \in \C^*$, such that ${\bf e}_1 \join {\bf k} = \beta {\bf e}_1 \join {\bf w}$. We normalize ${\bf k}$ such that $\beta = 1$. Then ${\bf k} - {\bf w} = {\bf e}_1 \join {\bf \overline{w}}$ for some ${\bf \overline{w}} \in \join^{m-3} \C^m$. Therefore if equation~\eqref{ExtKrupEq} holds for ${\bf k}$, it also holds for ${\bf w}$. 

Let $[x_1: \cdots :x_{m-1}]$ be homogeneous
coordinates on the dual of $H$, which is canonically isomorphic to $\G(m-3,H)$. 
Therefore equation (\ref{ExtKrupEq}) is equivalent to the equality of 2 homogeneous polynomials of degree $c$ in $(x_1, \cdots, x_{m-1})$, which in turn is equivalent to the equality of $\binom{m-2+c}{c}$ coefficients. After eliminating $\lambda$, we have $\binom{m-2+c}{c} - 1$ algebraic conditions on
$({\bf e}_1,{\bf F},{\bf e}_2)$ in addition to ${\bf F}[{\bf e}_1]={\bf 0}$, ${\bf F}^T {\bf e}_2 = {\bf 0}$. The space of all fundamental matrices, that is, solutions to ${\bf F}[{\bf e}_1]={\bf 0}$,${\bf F}^T  {\bf e}_2={\bf 0}$, is irreducible of dimension $N$.  Therefore, $V$ is at least $\left(N-\binom{m-2+c}{c} + 1 \right)$-dimensional.
\end{proof}

\medskip

This proposition provides a necessary condition, which roughly speaking means that for large enough values of $c$, the variety $V$ may be discrete. Since the class $c$ is a polynomial function of the degree $d$ of $X$, it can be arbitrarily big. Therefore one would expect that for $c$ large enough the variety $V$ will be finite, so that the projections $\pi_1,\pi_2$ can be recovered modulo the projective group of $\PP^m$ up to a finite fold ambiguity. We shall now prove this in two stages. First we shall consider the case $\dim(X) \geq \codim(X)-1$ and then the case $\dim(X) < \codim(X)-1$.

\subsubsection{Case: $\dim(X) \geq \codim(X) - 1$}
%\paragraph{Case: $\dim(X) \geq \codim(X) - 1$} \smallskip

We shall now prove that when $\dim(X) \geq \codim(X)-1$, that is when $2n + 1 \geq m$, then for a generic situation and for $c$ large enough, the variety $V$ is indeed discrete.

Consider the following incidence quasi-variety:
$$
\overset{\circ}{\Sigma_j} = \{(p,q) \in i_j(\PP^{m-1}) \times Y_j \,|\, p \in T_qY_j\}. 
$$

Then let $\Sigma_j$ be the closure of $\overset{\circ}{\Sigma_j}$. Let $\tau_{1}^j$ and $\tau_2^j$ be the two canonical projections from $\Sigma$ to respectively $i_j(\PP^{m-1})$ and $Y_j$. Since the generic fiber of $\tau_2^j$ has dimension $n$, we have: $\dim(\Sigma_j) = 2n$. Therefore the union of $n-$planes tangent to $Y_j$ at smooth points covers the whole $i_j(\PP^{m-1})$, and the generic fiber of $\tau_1^j$ has dimension $2n - (m-1) \geq 0$. 

When we have exactly $2n+1=m$, the generic fiber of $\tau_1^j$ is finite. We will need to know the cardinal of this fiber.

\begin{lemma}
When $2n+1=m$, the cardinal of the generic fiber of $\tau_1^j$ is the class, $c$, of $X$. 
\end{lemma}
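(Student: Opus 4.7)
The plan is to identify the fiber cardinality $|\tau_1^{j,-1}(p)|$ with an intersection number on the Grassmannian $\G(n,m-1)$ and then match it to $c = \deg Y_j^*$ via Proposition~\ref{prop::same_degrees}.

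I would first introduce the Gauss map $\gamma_j\colon Y_{j,sm}\dashrightarrow \G(n,m-1)$, $q\mapsto T_qY_j$, and its image $G_j:=\overline{\gamma_j(Y_j)}$. Under the genericity assumptions, $\gamma_j$ is birational onto $G_j$ and $\dim G_j = n$. For a generic $p$, consider the Schubert variety
$$
\Sigma_p \;:=\; \{\Lambda \in \G(n,m-1) : p \in \Lambda\},
$$
which has codimension $(m-1)-n = n$ (using $m=2n+1$), so $G_j\cap \Sigma_p$ is zero-dimensional. Since a smooth point $q\in Y_j$ belongs to the fiber $\tau_1^{j,-1}(p)$ exactly when $\gamma_j(q)\in \Sigma_p$, the birationality of $\gamma_j$ and transversality for generic $p$ yield
$$
|\tau_1^{j,-1}(p)| \;=\; |G_j \cap \Sigma_p|.
$$

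The core of the proof is then to show $|G_j\cap\Sigma_p| = c$. Proposition~\ref{prop::same_degrees} identifies $c$ with the intersection $|G_j\cap\Sigma(L)|$, where $\Sigma(L):=\{\Lambda : \dim(\Lambda\cap L)\geq n-1\}$ is the codimension-$n$ Schubert cycle associated to a generic codimension-$2$ linear subspace $L\subset \PP^{m-1}$. I would bridge the two counts by a one-parameter specialization of $L$: construct a flat family $L_t$ of codimension-$2$ subspaces such that $L_0$ passes through $p$ and the associated pencil of hyperplanes through $L_0$ collapses so that the condition ``$T_qY_j + L_t \neq \PP^{m-1}$'' limits to ``$p \in T_qY_j$.'' Conservation of intersection numbers along the flat family then forces $|G_j\cap \Sigma(L)| = |G_j \cap \Sigma_p|$, giving the claimed equality.

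The main obstacle is that $[\Sigma_p] = \sigma_{(n)}$ and $[\Sigma(L)] = \sigma_{(1^n)}$ represent distinct Schubert classes in $H^{2n}(\G(n,m-1))$ for $n\geq 2$, so the equality of intersection numbers against $[G_j]$ is not a formal cohomological identity; it must use the specific structure of $G_j$ as the Gauss image of a smooth reflexive variety. I would address this either by a careful scheme-theoretic analysis of the flat limit of the incidence $\{(\Lambda,t) : \Lambda\in \Sigma(L_t)\}$—verifying that the special fiber at $L_0$ specializes to $\Sigma_p$ with the correct multiplicities on the locus meeting $G_j$—or by exploiting reflexivity $Y_j=Y_j^{**}$ on the conormal variety $\mathcal{N}\cong \PP(\mathcal{F}^*)$: the condition ``$p\in T_qY_j$'' translates under duality to ``the conormal $(n-1)$-plane $(T_qY_j)^\perp\subset \PP^{(m-1)*}$ is contained in the hyperplane $p^*$,'' and counting such fully-contained rulings of the scroll $Y_j^*$ against its generic hyperplane section $Y_j^*\cap p^*$ (a hypersurface of degree $c$ in $p^*\cong \PP^{m-2}$) directly yields the answer $c$.
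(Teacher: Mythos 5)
Your first step coincides with the paper's: both identify the generic fiber of $\tau_1^j$ over $p$ with $\overline{\gamma_j(Y_j)}\cap\sigma_p(n)$, where $\sigma_p(n)$ is the Schubert variety of $n$-planes through $p$. The divergence comes immediately after. The paper asserts that this intersection has cardinality $\deg\overline{\gamma_j(Y_j)}$ on the grounds that $\sigma_p(n)$ is a linear subspace of the Pl\"ucker space --- which it is not for $n\geq 2$, being a sub-Grassmannian $\G(n-1,2n-1)$ --- and then invokes Proposition~\ref{prop::same_degrees}. You instead isolate the real issue: $[\sigma_p(n)]=\sigma_{(n)}$ while the cycle computing the class is $\sigma_{(1^n)}$, and these are distinct classes in $H^{2n}(\G(n,2n))$ for $n\geq 2$, so the equality of the two intersection numbers against $[\overline{\gamma_j(Y_j)}]$ is not a formal identity. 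This is a sharper reading than the paper's own proof.

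The genuine gap is that your proposal stops at naming this obstacle: neither the flat specialization of $L_t$ nor the reflexivity argument is carried out --- and neither can be, because the needed equality fails in general. Writing $G=\overline{\gamma(Y)}$ and pulling back the tautological bundles along the Gauss map, one gets $\int_G\sigma_{(n)}=\int c_n\bigl(N(-1)\bigr)$ for the fiber count (with $N$ the normal bundle of the projection to $\PP^{2n}$) and $\int_G\sigma_{(1^n)}=\int c_n\bigl(P^1(\mathcal{O}(1))\bigr)=c$ for the class; these are different Chern numbers. Concretely, take $X$ a general intersection of three quadrics in $\PP^5$ (a degree-$8$ K3 surface satisfying all the genericity hypotheses of the paper: nondegenerate, nondeficient secant variety, dual a hypersurface). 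Its class is $c=3d+2K\cdot h+c_2(X)=24+0+24=48$, whereas the number of tangent planes of the projected surface $Y\subset\PP^4$ passing through a general point is $c_2\bigl(N(-1)\bigr)=c_2\bigl(\mathcal{O}_X(1)^{\oplus 3}\bigr)=3h^2=24$. (Their sum, $72=(K+3h)^2$, is the Pl\"ucker degree of the Gauss image, consistent with $\sigma_1^2=\sigma_{(2)}+\sigma_{(1,1)}$.) So the obstacle you flag is not a technical hurdle to be engineered around: the honest conclusion of your own observation is that the generic fiber of $\tau_1^j$ has cardinality $\int c_n\bigl(N(-1)\bigr)$, which differs from $c$ already for surfaces, and the lemma --- together with the paper's proof of it and of Proposition~\ref{prop::same_degrees} --- needs to be corrected rather than reproved.
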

\begin{proof}
For the sake of simpliciy, we shall drop here the index $j$. Consider the Gauss map of $Y$: $\gamma: Y \dashrightarrow \G(n,m-1)$. Let $\overline{\gamma(Y)}$ be the closure of its image. Under our genericity assumptions relative to the variety $X$ and the projections, the Gauss map has degree $1$ and so $\dim(\overline{\gamma(Y)}) = n$. Since $\dim(\G(n,m-1)) = (n+1)(m-1-n)$ and here $m = 2n+1$, the codimension of $\overline{\gamma(Y)}$ is $n^2$. 

Now consider $p$ a generic point in $\PP^{m-1}$ and $\sigma_p(n)$ the set of $n-$dimensional linear subspaces of $\PP^{m-1}$ that contain $p$. Then $\sigma_p(n)$ is a linear subspace in $\G(n,m-1)$ of dimension $n(m-n-1)$, which is $n^2$ when $m = 2n+1$. 

Therefore for a generic $p$, the number of points in the fiber $\tau_1^{-1}(p) = \sigma_p(n) \cap \overline{\gamma(Y)}$ is presicely the degree of $\overline{\gamma(Y)}$. 

By proposition~\ref{prop::same_degrees}, this is identical to the degree of the dual variety. 
\end{proof}

In order to show that generically $V$ is discrete, we introduce some
additional notations.   Given a generic triplet $({\bf e}_1,{\bf F},{\bf e}_2) \in
\PP^{m-1} \times \PP^N \times \PP^{m-1}$, the fibers $(\tau_1^1)^{-1}({\bf e}_1)$ and $(\tau_1^2)^{-1}({\bf e}_2)$ have at least $c$ points. Let $( {\bf 
q}_{1\alpha}({\bf e}_1),{\bf q}_{2\alpha}({\bf
e}_2) )_\alpha$ be a finite family of $c$ corresponding elements in $(\tau_1^1)^{-1}({\bf e}_1) \times (\tau_1^2)^{-1}({\bf e}_2)$, that is of $c$ tangency (regular) points of the epipolar spaces of dimension $n$ through ${\bf
e}_1$ (respectively ${\bf e}_2$) to the first (respectively second)
projected variety, such that for every $\alpha$, there exists a point ${\bf Q}_\alpha({\bf e}_1,{\bf e}_2) \in X$, that is projected onto ${\bf 
q}_{1\alpha}({\bf e}_1)$ and ${\bf q}_{2\alpha}({\bf e}_2)$.

Again let $L$ be the baseline joining the
two centers of projections. Then the points ${\bf Q}_\alpha({\bf e}_1,{\bf e}_2) \in X$ are tangency points to $X$ of subspaces of dimension $n+1$ containing $L$. 

We next provide  a sufficient condition for $V$ to
be discrete. 

\begin{proposition}
For a generic position of the centers of projection, the variety $V$ will be discrete if, for any point $({\bf e}_1,{\bf F},{\bf e}_2) \in V$, the
union of $L$ and the points ${\bf Q}_\alpha({\bf e}_1,{\bf e}_2)$ for any generic finite family of $c$ corresponding points $( {\bf 
q}_{1\alpha}({\bf e}_1),{\bf q}_{2\alpha}({\bf
e}_2) )_\alpha$ is not contained in any quadratic hypersurface. 
\end{proposition}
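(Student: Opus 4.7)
The plan is to show that under the stated hypothesis the Zariski tangent space $T_p V$ is trivial at any generic point $p = ({\bf e}_1, {\bf F}, {\bf e}_2) \in V$, which forces $V$ to be discrete at $p$; since genericity is assumed throughout, $V$ itself is then discrete. I would first lift $p$ to a representative pair of projection matrices $(\pi_1, \pi_2)$, using Theorem~\ref{thm::FundEquiv} to justify that such a lift exists modulo the action $\varrho$ of $\Pr_m$. Any tangent vector $v = ({\bf e}_1', {\bf F}', {\bf e}_2') \in T_p V$ then corresponds, via this lift, to an infinitesimal deformation of $(\pi_1, \pi_2)$ modulo the Lie algebra of $\Pr_m$.

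Next, I would exploit the linearization of the generalized Kruppa equation~\eqref{ExtKrupEq} at each tangency point. For each $\alpha$, the epipolar $(n+1)$-plane through $L$ containing ${\bf Q}_\alpha$ is tangent to $X$, and its image under $\pi_i$ is a hyperplane through ${\bf e}_i$ tangent to $Y_i$ at ${\bf q}_{i\alpha}$. Selecting for each $\alpha$ a generic extensor ${\bf w}_\alpha$ representing an $(m-3)$-plane cut out by this epipolar hyperplane, both sides of $\phi_2(\xi({\bf w}_\alpha)) = \lambda\,\phi_1(\gamma({\bf w}_\alpha))$ vanish at $p$; differentiating along $v$ yields a linear condition encoding first-order preservation of the tangency. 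The differentials of ${\bf F}[{\bf e}_1] = {\bf 0}$ and ${\bf F}^T {\bf e}_2 = {\bf 0}$ give analogous conditions relating $v$ to the baseline $L$.

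The central and most delicate step is to organize these linearized constraints into a symmetric bilinear form on $\C^{m+1}$. Concretely, I would construct an injective linear map $v \mapsto Q_v$ from $T_p V$ into the space of such forms with the property that $Q_v({\bf R}, {\bf R}) = 0$ for every point ${\bf R}$ on the baseline $L$ (capturing the baseline constraints) and $Q_v({\bf Q}_\alpha, {\bf Q}_\alpha) = 0$ for each $\alpha$ (capturing the linearized Kruppa constraints at the $c$ tangency points). Building this identification requires a careful tracking of the infinitesimal action of $\Pr_m$ on pairs of projection matrices together with the combinatorics of the join and meet operators appearing in $\xi$ and $\gamma$; this is the main obstacle of the proof, and the step that most closely mirrors (but does not reduce to) the analogous argument in~\cite{Kaminski-04} for curves.

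Once this identification is established, any non-zero $Q_v$ would projectivize to a non-trivial quadric hypersurface in $\PP^m$ containing $L \cup \{{\bf Q}_\alpha\}_{\alpha=1}^c$, contradicting the hypothesis of the proposition. Hence $Q_v = 0$, and by injectivity of the map $v \mapsto Q_v$ we would deduce $v = 0$, so $T_p V = 0$ and $V$ is discrete at $p$.
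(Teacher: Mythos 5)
Your overall strategy is the right one and matches the paper's: differentiate the constraints along a putative tangent vector, package the result as a quadratic condition satisfied by the baseline $L$ and the tangency points ${\bf Q}_\alpha$, and contradict the hypothesis. But there are two genuine gaps. First, the injectivity of $v \mapsto Q_v$ is false as stated, and the argument cannot be made to show $T_pV=0$ by injectivity. The quadric you construct can only depend on the ${\bf F}$-component $\Phi$ of the tangent vector (the linearized Kruppa equation is what produces it), and even the map $\Phi \mapsto Q_\Phi$ has a nontrivial kernel: the quadratic form associated with the true fundamental matrix itself, ${\bf P} \mapsto \sum_i \left({\bf M}_2{\bf P} \join {\bf G}\,{\bf M}_1{\bf P}\right)_i$, vanishes identically on $\PP^m$, because ${\bf M}_1{\bf P}$ and ${\bf M}_2{\bf P}$ are always corresponding points. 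The usable statement is the contrapositive the paper proves: if $\Phi$ is \emph{not} proportional to ${\bf F}$, then the associated quadric is not identically zero, hence $L \cup \{{\bf Q}_\alpha\}$ lies on a genuine quadric, contradiction; the epipole components of the tangent vector are then controlled separately by the kernel relations ${\bf F}[{\bf e}_1]={\bf 0}$ and ${\bf F}^T{\bf e}_2={\bf 0}$. Deducing $v=0$ from $Q_v=0$, as you propose, is exactly the step that breaks.

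Second, the step you defer as "the main obstacle" is in fact the heart of the proof, and it is not the bookkeeping of the $\Pr_m$-action you describe. Writing $\chi_\alpha = \omega({\bf q}_{1\alpha},{\bf G},{\bf q}_{2\alpha})$ with $\omega$ trilinear and ${\bf G}=\psi({\bf F})$ the \emph{reduced} fundamental matrix (acting on points, not on $(m-3)$-planes -- this passage is what makes the final object a quadratic form in point coordinates on $\PP^m$ rather than a condition on extensors), the derivative has three terms by the Leibniz rule. Two of them involve the unknown velocities $\nabla({\bf q}_{i\alpha})$ of the tangency points, and unless these are shown to vanish, the linearized constraint does not reduce to $\omega({\bf q}_{2\alpha},{\bf J_\psi}\Phi,{\bf q}_{1\alpha})=0$ and no quadric through the ${\bf Q}_\alpha$ emerges. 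The reason they vanish is geometric: $\nabla({\bf q}_{1\alpha})$ stays in the tangent space $T_{{\bf q}_{1\alpha}}Y_1$ (differentiate the defining equations of $Y_1$ along the deformation), and for any ${\bf a}$ in that tangent space the epipolar line ${\bf G}{\bf a}$ passes through ${\bf e}_2$ and is tangent to $Y_2$ at ${\bf q}_{2\alpha}$, so $\omega({\bf q}_{2\alpha},{\bf G},{\bf a})=0$; symmetrically for the other term. Your proposal gestures at "first-order preservation of the tangency" but without this cancellation the construction of $Q_v$ does not go through.
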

\begin{proof}
For generic projections with respect to $X$, there will be $c$ distinct points
$\{{\bf q}_{1\alpha}({\bf e}_1)\}$ and $\{{\bf q}_{2\alpha}({\bf
e}_2)\}$, and we can regard ${\bf q}_{1\alpha}$, ${\bf q}_{2\alpha}$
locally as smooth functions of ${\bf e}_1$, ${\bf e}_2$. 

We let $W$ be the affine cone in $\C^{m+1} \times \C^{N+1} \times \C^{m+1}$ over $V$.  Let $\Theta=({\bf e}_1, {\bf F}, {\bf e}_2)$ be a point of $W$ corresponding to a non-isolated point of $V$. Then there is a tangent vector $\vartheta=({\bf v}, \Phi, {\bf v}')$ to $W$ at $\Theta$ with $\Phi$ not a multiple of ${\bf F}$. By lemma~ \ref{lemma::polynomial_relations_F}, the reduced fundamental matrix ${\bf F}_{n-1}$ is a polynomial function of ${\bf F} = {\bf F}_2$. In the sequel, the reduced fundamental matrix will be denoted ${\bf G}$, so that ${\bf G} = \psi({\bf F})$, for $\psi = \phi_{2,n-1}$ in the notation of lemma~\ref{lemma::polynomial_relations_F}.

If $\chi$ is a function on $W$, $\nabla_{\Theta,\vartheta}(\chi)$ will
denote the derivative of $\chi$ in the direction defined by
$\vartheta$ at $\Theta$.  

Consider 
$$
\chi_\alpha({\bf e}_1,{\bf F},{\bf e}_2)= \sum_i \left({\bf q}_{2\alpha}({\bf
e}_2) \join ({\bf G}{\bf q}_{1\alpha}({\bf e}_1)) \right)_i, 
$$
where $\left({\bf q}_{2\alpha}({\bf
e}_2) \join ({\bf G}{\bf q}_{1\alpha}({\bf e}_1)) \right)_i$ are the components of ${\bf q}_{2\alpha}({\bf
e}_2) \join ({\bf G}{\bf q}_{1\alpha}({\bf e}_1)) $.

Then the generalized Kruppa equation implies that $\chi_\alpha$ vanishes
identically on $W$, so its derivative must also vanish. Notice that $\chi_\alpha$ is linear with respect to each of these three variables ${\bf q}_{1\alpha},{\bf q}_{2\alpha}, {\bf G}$. We shall write $\chi_\alpha({\bf e}_1,{\bf F},{\bf e}_2) = \omega({\bf q}_{1\alpha},{\bf G}, {\bf q}_{2\alpha})$, where $\omega$ is a tri-linear function. Therefore for a fixed input ${\bf G}$ the vanishing of $\omega$ defines a quadric hypersurface in $\PP^{m-1} \times \PP^{m-1}$.  

This yields 
\begin{equation}\label{EqNablaQFQ}
\begin{split}
\nabla_{\Theta,\vartheta}(\chi_\alpha) & = 
\omega(\nabla_{\Theta,\vartheta}({\bf q}_{2\alpha}), {\bf G},{\bf q}_{1\alpha}) \\ 
&\quad + \omega({\bf q}_{2\alpha},{\bf J_\psi} \Phi,{\bf q}_{1\alpha}) \\
& \quad + \omega({\bf q}_{2\alpha},{\bf G},\nabla_{\Theta,\vartheta}({\bf q}_{1\alpha})) \\
&=0,
\end{split}
\end{equation}
where ${\bf J_\psi}$ is the Jacobian matrix of $\psi$.

Now let $(f_1, \cdots, f_l)$ be the generators of the radical ideal of the projected variety $Y_1$. Consider $\kappa_i(t)=f_i({\bf
q}_{1\alpha}({\bf e}_1 + t{\bf v}))$, for some $1 \leq i \leq l$. 

Since ${\bf q}_{1\alpha}({\bf e}_1 +
t{\bf v}) \in Y_1$, $\kappa_i \equiv 0$, so the derivative
$\kappa_i'(0)=0$.  On the other hand,
$\kappa_i'(0)=\nabla_{\Theta,\vartheta}(f_i({\bf q}_{1\alpha}))
=\grad_{{\bf q}_{1\alpha}}(f_i)^T\nabla_{\Theta,\vartheta}({\bf
q}_{1\alpha}) = 0$. This implies that $\nabla_{\Theta,\vartheta}({\bf
q}_{1\alpha})$ lies in the tangent space to $Y_1$ at ${\bf q}_{1\alpha}$. 
Since for a generic configuration, the point ${\bf q}_{1\alpha}$ is not a singular point of $Y_1$, this tangent space has dimension $n$. 

We  also have for all $i$, $\grad_{{\bf q}_{1\alpha}}(f_i)^T{\bf q}_{1\alpha}=0$ and $\grad_{{\bf q}_{1\alpha}}(f_i)^T{\bf e}_1=0$. The former equality is just the Euler identity for homogeneous polynomials, while the latter equality is deduced from the fact that ${\bf e}_1$ lies in the tangent space to $Y_1$ at ${\bf q}_{1\alpha}$ by definition of ${\bf q}_{1\alpha}$.  

Consider now a point ${\bf a}$ in this tangent space. The line ${\bf G} {\bf a}$ in $i_2(\PP^{2n})$ contains the second epipole ${\bf e}_2$ and is tangent to $Y_2$ at ${\bf q}_{2\alpha}$. Therefore $\omega({\bf q}_{2\alpha}({\bf e}_2), {\bf G}, {\bf a}) = 0$. In particular, we have:
$$
\omega({\bf q}_{2\alpha},{\bf G}, \nabla_{\Theta,\vartheta}({\bf q}_{1\alpha}))=0.
$$
Or in other words, the third term of equation (\ref{EqNablaQFQ}) vanishes.

In a similar way, the first term of equation (\ref{EqNablaQFQ}) vanishes, leaving 
\begin{equation}
\label{q_al}
\omega({\bf q}_{2\alpha},{\bf J_\psi} \Phi,{\bf q}_{1\alpha})=0. 
\end{equation}

The derivative of $\chi({\bf e}_1,{\bf F},{\bf e}_2)= \sum_i \left( {\bf e}_2 \join {\bf G}{\bf e}_1 \right)_i$
must also vanish, which  yields similarly
\begin{equation}
\label{e_12}
\omega({\bf e}_2{}, {\bf J_\psi} \Phi, {\bf e}_1)=0.
\end{equation}
   
From equality (\ref{q_al}), we deduce that for every ${\bf Q}_\alpha$,
we have 
\begin{equation}
\label{eq::quadricQ}
\omega( {\bf M}_2 {\bf Q}_\alpha, {\bf J_\psi} \Phi, {\bf M}_1 {\bf Q}_\alpha)=0.
\end{equation}
From equality (\ref{e_12}), we deduce that every point ${\bf P}$ lying
on the line $L$ joining the two centers of projection must satisfy 
\begin{equation}
\label{eq::quadricL}
\omega({\bf M}_2 {\bf P}, {\bf J_\psi} \Phi, {\bf M}_1 {\bf P})=0.
\end{equation}
The fact that ${\bf J_\psi} \Phi$ is not a multiple of ${\bf G}$ (since $\psi$ is invertible and $\Phi$ is not a multiple of ${\bf F}$) implies that both equations~\eqref{eq::quadricQ} and~\eqref{eq::quadricL} are not identically zero, so together these two last equations mean
that the union $L \cup \{{\bf Q}_\alpha\}$ lies on a quadratic hypersurface.  Thus if there is no such quadratic hypersurface, every point in $V$
must be isolated. 
\end{proof}

In general there is no quadric in $\PP^m$ containing a 
given baseline and the tangency points as the following proposition shows. 

\begin{proposition}
For a generic position of the centers of projections, there is no quadratic hypersurface containing the line $L$ and the tangency points $({\bf Q}_\al)_{\al=1,...,c}$ with $X$ of $c$ linear subspaces of dimension $(n+1)$ containing $L$, provided $c \geq \frac{1}{2}(m+2)(m+1)$.
\end{proposition}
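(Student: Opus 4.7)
The plan is a dimension count in the space of quadric hypersurfaces, carried out through the $2$-uple Veronese embedding. The space of quadric hypersurfaces in $\PP^m$ is a projective space $\mathcal Q$ of dimension $N=\binom{m+2}{2}-1=\tfrac{(m+1)(m+2)}{2}-1$. Containment of the line $L$ is equivalent to vanishing at any three distinct points of $L$, because the restriction of a quadric to $L\cong\PP^1$ is a binary quadratic form; this imposes exactly three linear conditions on $\mathcal Q$. Each further constraint ${\bf Q}_\al\in Q$ is a single linear condition, so altogether we have $c+3$ linear conditions on the $(N+1)$-dimensional coefficient space of quadratic forms.

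I would then rephrase the problem through the $2$-uple Veronese embedding $\nu_2:\PP^m\hookrightarrow\PP^N$: quadrics in $\PP^m$ pull back from hyperplanes of $\PP^N$, and the condition that a quadric vanishes at a point $p\in\PP^m$ becomes the condition that the corresponding hyperplane contains $\nu_2(p)$. Thus a quadric containing $L$ and all the ${\bf Q}_\al$ exists if and only if the $2$-plane $\Pi:=\langle\nu_2(L)\rangle$ (spanned by the conic $\nu_2(L)$) together with the Veronese images $\nu_2({\bf Q}_\al)$ fails to span $\PP^N$; equivalently, the images of the ${\bf Q}_\al$ in the quotient $\PP^N/\Pi\cong\PP^{N-3}$ fail to span. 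Since $c\ge\tfrac{(m+1)(m+2)}{2}=N+1>N-2$, we have strictly more points than the $N-2$ minimum required to span $\PP^{N-3}$ in general position, so the problem reduces to showing the Veronese images of the tangency points are in sufficiently general position modulo $\Pi$.

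To establish general position I would exploit the irreducibility of the parameter space of pairs of centers of projection together with the algebraic dependence of the ${\bf Q}_\al$ on the centers. By semi-continuity it suffices to exhibit a single configuration for which the spanning holds, and this can be done inductively: starting from $\Pi$, add one $\nu_2({\bf Q}_\al)$ at a time, at each stage using the non-degeneracy of $X$ and the mobility of the tangency points to argue that the current proper subspace of $\PP^N$ fails to contain some point of $\nu_2(X)$ that is realizable as a tangency point after a small perturbation of the centers of projection.

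The main obstacle is the possibility that $X$ itself lies on a non-trivial family of quadrics, so that $\nu_2(X)$ spans only a proper subspace $\Lambda\subsetneq\PP^N$; in that situation no set of points on $X$ alone could span $\PP^N$. This pathology is resolved by exploiting the line $L$: for a generic choice of centers of projection, $L$ does not lie on any quadric containing $X$. Indeed, the Fano variety of lines on any fixed quadric in $\PP^m$ has dimension at most $2m-3$, and the family of quadrics containing $X$ has bounded projective dimension, so by a parameter count the locus of lines lying on some quadric through $X$ is a proper subvariety of $\G(1,m)$, which has dimension $2m-2$. For a generic baseline $L$, then, any quadric simultaneously containing $L$ and all ${\bf Q}_\al$ cannot contain $X$, and on the complement of the family of quadrics vanishing on $X$ the $\nu_2({\bf Q}_\al)$ impose independent conditions modulo $\Pi$; the dimension count in the previous paragraph then closes the argument.
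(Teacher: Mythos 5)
Your reduction via the $2$-uple Veronese embedding is sound and is in fact how the paper's own argument begins (the paper encodes ``the $c$ points lie on a quadric'' as a rank condition on the $c\times M$ matrix of Veronese coordinates, $M=\binom{m+2}{2}$). But the proof has a genuine gap at exactly the point where all the content lives: the claim that the Veronese images of the tangency points are in general position modulo $\Pi=\langle\nu_2(L)\rangle$. This is not a routine genericity statement, because the $c$ points ${\bf Q}_\alpha$ are not free parameters --- they form the fibre over $L$ of a single incidence correspondence $\Sigma\subset\G(1,m)\times X^c$, so they are a rigidly coupled configuration determined by the baseline. Your inductive argument (``add one $\nu_2({\bf Q}_\alpha)$ at a time and perturb the centers to move it off the current subspace'') does not go through: any perturbation of the centers moves \emph{all} $c$ tangency points simultaneously and also moves $\Pi$, so there is no way to adjust one point while preserving the spanning already achieved by the others, and no base case of the induction is actually exhibited. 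The semicontinuity framework is fine, but without producing (or proving the existence of) a single spanning configuration it proves nothing. Likewise, the final paragraph does not close the gap: knowing that $L$ does not lie on any quadric containing $X$ says nothing about whether the particular finite set $\{{\bf Q}_\alpha\}\cup L$ lies on some \emph{other} quadric, and the assertion that ``on the complement of the family of quadrics vanishing on $X$ the $\nu_2({\bf Q}_\alpha)$ impose independent conditions modulo $\Pi$'' is precisely the statement to be proved, restated.

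The paper avoids this trap by never arguing pointwise about general position. It forms the locus $S\subset(\PP^m)^c$ of $c$-tuples lying on a common quadric (dimension $cm+M-c-1$), intersects it with the image of the incidence variety $\Sigma$ of tangency configurations, and pushes forward to $\G(1,m)$, obtaining that the set of ``bad'' baselines has dimension at most $2(m-1)+M-1-c$. Since $\dim\G(1,m)=2(m-1)$, this is a proper sublocus exactly when $c\geq M=\tfrac{1}{2}(m+2)(m+1)$, which is where the hypothesis enters. If you want to salvage your approach, you would need an argument of this global incidence-variety type (or an explicit degeneration exhibiting one spanning configuration); the local perturbation heuristic cannot be made rigorous as stated.
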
  
\begin{proof}
Consider the product of $c$ instances of $\PP^{m}$: $H = \PP^{m} \times ... \times \PP^{m}$. 

\textbf{Step 1:} Let $S$ be the set of $c-$tuples $(Q_1,...,Q_c) \in H$ such that the points $Q_i$ lie on a quadric. 

Let us write $[X_{i0}: \cdots : X_{im}]$ for the homogeneous coordinates of the $i-$th copy of $\PP^m$ in $H$. Then consider the morphism $\nu = (\nu_2, \cdots, \nu_2)$ from $H$ to $\Xi = \underset{c \text{ times}}{\underbrace{\PP^{M-1} \times \cdots \times \PP^{M-1}}}$, where $\nu_2$ is the degree $2$ Veronese map and $M = \binom{m+2}{2}$. Let $V_1 = \nu(H)$ which is a closed subvariety of $\Xi$. Since $\nu$ is invertible, we have $\dim(V_1) = cm$. 

Now let $\tilde{V}_2$ be the subvariety of $\PP^{cM-1}$, which points are non-zero matrices of size $c \times M$ up to scalars, with	 rank at most $M-1$, where $c \geq M$. Then $\dim(\tilde{V}_2) = (c+1)(M-1)-1$. The variety $\tilde{V}_2$ is defined by the vanishing of all determinants of $M \times M$ submatrices. So it is defined by equations that are multilinear with respect to the rows of the matrix. Hence it defines a variety $V_2$ in $\Xi$ and $\dim(V_2) = \dim(\tilde{V}_2) - (c-1)$.

Since $V_1$ and $V_2$ intersect transversely, we have $\dim(V_1 \cap V_2) = cm + (c+1)(M-1)-1- (c-1) - c(M-1) = cm + M - c -1$. Moreover $S$ is precisely $\nu^{-1}(V_1 \cap V_2)$, so that $\dim(S) = cm + M - c -1$.

\textbf{Step 2:} Consider now the following set:
$$
\begin{array}{rcl}
\Sigma & = & \{(L,{\bf Q}_1,...,{\bf Q}_c) \in \G(1,m) \times X^c \mid \\
& & \quad \text{ for each } {\bf Q}_i \text{ there is a (n+1)-plane that contains } L \\
& & \quad \text{ and the tangent space to X at } {\bf Q}_i\}.
\end{array}
$$ 

Let us show that $\Sigma$ is an algebraic variety. 

A point ${\bf Q} \in \PP^{m}$ is a tangency point of a $(n+1)-$plane containing a given line $L$ with $X$, if and only if the two following conditions are satisfied: (i) ${\bf Q} \in X$ and (ii) the tangent space to $X$ at ${\bf Q}$ intersects $L$ (in projective space). The first condition is obvious, while the second just means that the space generated by $L$ and the tangent space to $X$ at $Q$ has dimension $n+1$ which is the required condition. 

The Pl\"ucker coordinates of the tangent $T_Q X$ to $X$ at $Q$ are homogenous polynomial functions of the coordinates of $Q$ (given by the Gauss map). $T_Q X$ intersects $L$ if and only if the join $T_Q X \join L$
vanishes, which yields bi-homogeneous equations on the coordinates of $Q$ and of those of $L$. We shall denote this set of equations $\phi(Q,L) = 0$. 

For a polynomial $F \in R[X_0, \cdots, X_{m}]$, where $R$ is some ring, we
write $F_i$ for the polynomial in $R[X_{i,0}, \cdots, X_{i,m}]$, obtained from $F$ by substituting to the variables $X_0, \cdots, X_{m}$ the variables $X_{i,0}, \cdots, X_{i,m}$. Therefore 
the set $\Sigma$ is made of the common zeros of the 
polynomials 
$F_{11},...,F_{r1},...$, $F_{1m},...,F_{rm},\phi_1,...,\phi_m$, where
$F_1,...,F_r$ are the polynomials defining $X$. Thus $\Sigma$ can be
viewed as a closed subvariety of $\G(1,m)  \times H$. 

Let us now compute the dimension of $\Sigma$. For this purpose, consider the following variety:
$$
\Sigma_1 = \{(L,{\bf Q}) \in \G(1,m) \times X \mid L \cap T_{\bf Q} X \neq \emptyset \}.
$$
Let $\eta_1$ and $\eta_2$ be the projection from $\Sigma_1$ to respectively $\G(1,m)$ and $X$. All fibers of $\eta_2$ have dimension $n + m-1$. Therefore $\dim(\Sigma_1) = 2n + m -1$ and the generic fiber of $\eta_1$ has dimension $2n+m-1 - 2(m-1) = 2n-m+1$. 

Now let $\tau_1$ and $\tau_2$ be the projection from $\Sigma$ to respectively $\G(1,m)$ and $X^c$. 

Given the dimension of the generic fiber of $\eta_1$, the generic fiber of $\tau_1$ has dimension $c(2n-m+1)$. Therefore we get:
$$
\dim(\Sigma) = 2(m-1) + c(2n-m+1).
$$

\textbf{Step 3:} Therefore the image of $\Sigma$ by $\tau_2$ has dimension less or equal to $2(m-1) + c(2n-m+1)$. Generically we expect to have an equality as the generic fiber of $\tau_2$ is expected  to be finite. 

However, in more generality, let $k \geq 0$ be the dimension of the generic fiber of $\tau_2$. Then $\dim(\tau_2(\Sigma)) = 2(m-1) + c(2n-m+1) - k$. This variety is transverse to $S$, so that
$$
\begin{array}{rcl}
\dim(S \cap \tau_2(\Sigma)) & = & 2(m-1) + c(2n-m+1) - k + cm + M -c-1 - cm \\
 & = & 2(m-1) + c(2n-m+1) - k + M -c-1. 
\end{array}
$$

Therefore the dimension of $\tau_2^{-1}(S)$ is $2(m-1) + c(2n-m+1) + M -c-1$ and consequently, we have:
$$
\begin{array}{rcl}
\dim(\tau_1(\tau_2^{-1}(S))) & \leq & (2(m-1) + c(2n-m+1) + M -c -1) - c(2n-m+1)  \\
& \leq & 2(m-1) + M - 1 - c
\end{array}
$$

Therefore if $M - c \leq 0$ or equivalently if $c \geq \frac{1}{2}(m+2)(m+1)$, for a generic line $L$, there is no $c$ points in the fiber of $\tau_1$ over $L$ that lie in the same quadrics. 

Therefore generically, there is no quadric containing $c$ points of tangency and the baseline passing through the center of projection. 
\end{proof}

This yields the following theorem.

\begin{theorem}
\label{thm::dim(x)*big}
When $\dim(X) \geq \codim(X) - 1$ and $c \geq \frac{1}{2}(m+2)(m+1)$, the variety $V$ is discrete for generic projections.
\end{theorem}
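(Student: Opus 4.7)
My plan is to combine the two preceding propositions together with the lemma on the cardinality of the generic fiber of $\tau_1^j$. The hypothesis $\dim(X)\geq\codim(X)-1$ is equivalent to $2n+1\geq m$, and when equality $2n+1=m$ holds the lemma guarantees that the generic fiber of $\tau_1^j$ has exactly $c$ points, so the finite family $({\bf q}_{1\alpha}({\bf e}_1),{\bf q}_{2\alpha}({\bf e}_2))_\alpha$ that appears in the first of the two propositions above is genuinely available with $c$ corresponding tangency points. When $2n+1>m$, the generic fibers of $\tau_1^j$ have positive dimension, so one can still extract $c$ corresponding tangency points (and indeed more) by selecting a generic finite subfamily; no information is lost for the argument.

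Next I would invoke the sufficient-condition proposition: for a generic position of the centers of projection, if for every $({\bf e}_1,{\bf F},{\bf e}_2)\in V$ the union $L\cup\{{\bf Q}_\alpha({\bf e}_1,{\bf e}_2)\}_{\alpha=1}^{c}$ of the baseline and the $c$ tangency points in $X$ fails to lie on a quadratic hypersurface of $\PP^m$, then every point of $V$ is isolated, i.e. $V$ is discrete. So the task reduces to verifying this non-existence of a quadric.

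Finally, the non-existence is precisely the content of the second proposition, which was proved under the assumption $c \geq \frac{1}{2}(m+2)(m+1)$: for a generic choice of centers of projection, no quadric of $\PP^m$ contains the baseline together with the $c$ tangency points with $X$ of the $(n+1)$-planes through $L$. Combining this with the previous paragraph yields discreteness of $V$ for generic projections, proving the theorem.

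There is essentially no remaining obstacle, since the two preceding propositions have already done all the technical work; the only subtlety is making explicit that the hypothesis $\dim(X)\geq \codim(X)-1$ is exactly what allows the tangency family $({\bf q}_{1\alpha},{\bf q}_{2\alpha})_\alpha$ to be defined and the proposition on the sufficient condition to apply (without this, the fibers $(\tau_1^j)^{-1}({\bf e}_j)$ could be empty and the argument would break down at its starting point).
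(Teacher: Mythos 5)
Your proposal is correct and follows exactly the paper's route: the theorem is stated there as an immediate consequence of the two preceding propositions (the quadric-based sufficient condition for discreteness and the non-existence of such a quadric when $c \geq \frac{1}{2}(m+2)(m+1)$), which is precisely the combination you carry out. Your extra remark that $2n+1\geq m$ is what guarantees the fibers $(\tau_1^j)^{-1}({\bf e}_j)$ are non-empty with at least $c$ tangency points is a correct and worthwhile clarification of why the setup of those propositions applies.
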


\textbf{Remark:} Notice that when $c \geq \frac{1}{2}(m+2)(m+1)$, the lower bound appearing in proposition~\ref{prop:lowerbound} is negative, so the two results complete each other.

\subsubsection{Case: $\dim(X) < \codim(X) - 1$} 

When $\dim(X) < \codim(X) -1$, that is when $2n+1 < m$, there is for a generic point $p \in i_j(\PP^{m-1})$ no point $q \in Y_j$, such that $p \in T_qY_j$. However, one can show by induction that the variety defined by equation~\eqref{ExtKrupEq} is still discrete for generic projections. 

Let us define the following proposition: \newline
\textit{$P_m$: Given $X$ a smooth projective irreducible variety of dimension $n$ and degree $d > 2$ and embedded in $\PP^m$, and given $Y_1,Y_2$ two generic projections of $X$ through unknown centers of projection, one can recover the projection operators up to a finite fold ambiguity modulo the group $\PP GL_{m+1}$, from $Y_1$ and $Y_2$ only, provided the class $c$ of $X$ satisfies $c \geq \frac{1}{2}(m+2)(m+1)$.}

\begin{theorem}
\label{thm::induction}
For $2n + 1 < m$, if $P_{m-1}$ is satisfied then $P_m$ is satisfied. 
\end{theorem}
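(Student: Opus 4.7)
The strategy is to reduce the problem in $\PP^m$ to a problem in $\PP^{m-1}$ through a generic hyperplane section containing the baseline, then invoke the inductive hypothesis $P_{m-1}$. The discreteness of the Kruppa variety for the sliced problem will then be pulled back to discreteness of the full Kruppa variety via a tangent-vector argument.

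First I would choose a generic hyperplane $\Pi \subset \PP^m$ containing the baseline $L = \overline{{\bf O}_1{\bf O}_2}$. By Bertini's theorem applied to the pencil of hyperplanes through $L$ (whose base locus is not contained in $X$ for generic projections), the section $X_\Pi := X \cap \Pi$ is a smooth irreducible variety of dimension $n-1$ and degree $d > 2$ in $\Pi \cong \PP^{m-1}$. A standard argument shows that a generic hyperplane section preserves the class: the dual $X_\Pi^*$ is the image of $X^*$ under projection from $[\Pi] \in \PP^{m*}$, and this projection is birational for generic $\Pi$, so $c(X_\Pi) = c$. Since both centers lie in $\Pi$, the projections $\pi_1, \pi_2$ restrict to genuine projections of $\Pi$ onto the hyperplanes $L_i := \pi_i(\Pi) \subset i_i(\PP^{m-1})$, and each $L_i$ passes through the epipole ${\bf e}_i$; the restricted projected varieties are $Y_i \cap L_i$. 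The hypothesis $c \geq \frac{1}{2}(m+2)(m+1) \geq \frac{1}{2}(m+1)m$ then guarantees that the restricted setup satisfies the hypotheses of $P_{m-1}$.

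Applying $P_{m-1}$, the restricted Kruppa variety $V_\Pi$ associated with $(Y_1 \cap L_1, Y_2 \cap L_2)$ is discrete. The crucial observation is that the full Kruppa equation \ref{ExtKrupEq}, when the extensor ${\bf w}$ ranges over $(m-3)$-planes contained in $L_1$, coincides with the Kruppa equation of the sliced problem $(X_\Pi, \pi_1|_\Pi, \pi_2|_\Pi)$. Hence restriction defines a natural morphism $V \to V_\Pi$ with image contained in a discrete set. Suppose for contradiction that $V$ has a positive-dimensional irreducible component $C$. At a smooth point $\Theta = ({\bf e}_1, {\bf F}, {\bf e}_2) \in C$, pick a non-zero tangent vector $\vartheta = ({\bf v}_1, \Phi, {\bf v}_2) \in T_\Theta C$ with $\Phi$ not proportional to ${\bf F}$. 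Since $V_\Pi$ is discrete, the differential of the restriction map annihilates $\vartheta$: the identity action on epipoles forces ${\bf v}_1 = {\bf v}_2 = 0$, and the ${\bf F}$-component forces $\Phi$, viewed as a linear operator, to vanish on the sub-Grassmannian $\G(m-3, L_1) \hookrightarrow \G(m-3, i_1(\PP^{m-1}))$.

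To conclude, I would vary $\Pi$ in the pencil of hyperplanes through the baseline (equivalently, vary $L_1$ in the pencil of hyperplanes through ${\bf e}_1$). Since every $(m-3)$-plane in $i_1(\PP^{m-1})$ lies in some hyperplane through ${\bf e}_1$, the union $\bigcup_{L_1 \ni {\bf e}_1} \G(m-3, L_1)$ is all of $\G(m-3, i_1(\PP^{m-1}))$, and a finite family of generic $L_1$'s is enough so that the sum of the corresponding subspaces $\join^{m-2} L_1 \subset \join^{m-2} \C^m$ spans the ambient exterior power. The vanishing of $\Phi$ on each $\G(m-3, L_1)$ then forces $\Phi = 0$ as a linear map on $\join^{m-2} \C^m$, contradicting the choice of $\vartheta$. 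Therefore $V$ is discrete, which establishes $P_m$. The main obstacle is the final density argument: one must verify that finitely many generic slices are enough to collectively recover the full fundamental matrix from its restrictions, or equivalently that the intersection of the kernels of the restriction differentials is trivial. A secondary technicality is the preservation of class under hyperplane sections constrained to contain the baseline rather than fully generic sections, which is handled by observing that the baseline is generic with respect to $X$ for generic centers of projection.
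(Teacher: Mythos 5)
Your reduction goes in the wrong direction, and this creates several gaps that I do not see how to repair. The paper's induction descends by composing with a further generic \emph{projection} $\tau:\PP^m\to\PP^{m-1}$ (so that $X'=\tau(X)$ is still smooth, irreducible, of the same dimension $n$ and the same degree and class, the class being preserved because $X'^*$ is a hyperplane section of $X^*$), and then lifts the recovered pair $(\tau_1,\tau_2)$ back to $(\pi_1,\pi_2)$ by solving the linear systems $\bf{\Theta}_k\bf{P}_k=\bf{\Gamma}_k$ and pinning down the residual $(m+1)$-dimensional ambiguity with Theorem~\ref{thm::reconstruction}. You instead descend by a \emph{hyperplane section} $X\cap\Pi$, and duality then works against you: the dual of a generic hyperplane section is (contained in) the \emph{projection} of $X^*$ from the point $[\Pi]\in\PP^{m*}$, and projecting a hypersurface from a general point gives all of $\PP^{(m-1)*}$, so this does not control the class. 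In fact the class of $X\cap\Pi$ is a different projective character of $X$ (a higher rank) and is in general not equal to $c$; your stated justification for $c(X_\Pi)=c$ is exactly the argument that works for projections, not for sections. A second structural problem is that slicing drops $\dim X$ from $n$ to $n-1$, so the defect $m-(2n+1)$ is unchanged and the recursion never reaches the base case $2n+1\geq m$ of Theorem~\ref{thm::dim(x)*big}; for $n=1$ the slice $X\cap\Pi$ is already a zero-dimensional, reducible set of $d$ points to which $P_{m-1}$ cannot be applied at all.

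The ``crucial observation'' is also unjustified. The Kruppa equation of the sliced problem involves the dual varieties of $Y_i\cap L_i$ viewed as hypersurfaces in $L_i^{*}\cong\PP^{(m-2)*}$, and these are not obtained by restricting $\phi_i$ to hyperplanes of $L_i$: a hyperplane of $L_i$ tangent to $Y_i\cap L_i$ corresponds to a whole pencil of hyperplanes of $i_i(\PP^{m-1})$, generically none of which is tangent to $Y_i$. Moreover $L_i=\pi_i(\Pi)$ is a hyperplane through the unknown epipole ${\bf e}_i$, so neither $V_\Pi$ nor the restriction map $V\to V_\Pi$ is defined from the data $(Y_1,Y_2)$ alone, and a spurious point $({\bf e}_1',{\bf F}',{\bf e}_2')\in V$ with ${\bf e}_1'\notin L_1$ has no meaningful image in $V_\Pi$ --- so even as a purely non-constructive dimension count, the step ``$V_\Pi$ discrete $\Rightarrow$ the differential of restriction kills $\vartheta$'' has no map to differentiate. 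Your final spanning argument (that finitely many $\G(m-3,L_1)$ with $L_1\ni{\bf e}_1$ span $\join^{m-2}\C^m$) is fine on its own, but it sits on top of these unsupported steps. I would recommend reworking the argument along the paper's line: compose with generic projections $\theta_k$ and $\tau$, apply $P_{m-1}$ to $X'=\tau(X)$ with its two projections $\tau_k$ (same $n$, same class), and then recover $\pi_1,\pi_2$ modulo $\varrho$ by the explicit linear-algebraic lifting.
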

\begin{proof}
Let $\pi_1: \PP^m \rightarrow i_1(\PP^{m-1})$ and $\pi_2: \PP^m \rightarrow i_2(\PP^{m-1})$ be the generic projections to be recovered, such that $Y_j = \pi_j(X)$. Let us consider another generic projection $\tau: \PP^m \rightarrow i(\PP^{m-1})$.  

Then by section~\ref{sec::generic_projection}, the variety $X' = \tau(X)$ is smooth and embedded in $\PP^{m-1}$, since $2n+1 < m$. For each $k$, let us consider $\theta_k: i_k(\PP^{m-1}) \rightarrow j_k(\PP^{m-2})$ another generic projection from $i_k(\PP^{m-1})$ to a generic hyperplane of $i_k(\PP^{m-1})$. Then let $Y_k' = \theta_k(Y_k)$.

There exist projections $\tau_1$ and $\tau_2$ from $i(\PP^{m-1})$ to $j_1(\PP^{m-2})$ and $j_2(\PP^{m-2})$ respectively such that $\theta_k \circ \pi_k = \tau_k \circ \tau$, since $\tau_k = \theta_k \circ \pi_k \circ \varsigma$, where $\varsigma$ is any right inverse of $\tau$. 

One can sum up the situation through the following commutative diagram.

\begin{center}
\begin{tikzpicture}[
	node distance=2cm and 1.5cm,
	>={Stealth},
	every node/.style={font=\small}, scale = 1
	]
	% Nodes
	\node (Pm) {$\mathbb{P}^m$};
	\node (i1) [below left=2cm and 2cm of Pm] {$i_1(\mathbb{P}^{m-1})$};
	\node (i)  [below=2cm of Pm] {$i(\mathbb{P}^{m-1})$};
	\node (i2) [below right=2cm and 2cm of Pm] {$i_2(\mathbb{P}^{m-1})$};
	\node (j1) [below=2cm of i1] {$j_1(\mathbb{P}^{m-2})$};
	\node (j2) [below=2cm of i2] {$j_2(\mathbb{P}^{m-2})$};
	
	% Arrows from P^m
	\draw[->] (Pm) -- node[above left] {$\pi_1$} (i1);
	\draw[->] (Pm) -- node[right] {$\tau$} (i);
	\draw[->] (Pm) -- node[above right] {$\pi_2$} (i2);
	
	% Arrows from middle row
	\draw[->] (i1) -- node[left] {$\theta_1$} (j1);
	\draw[->] (i2) -- node[right] {$\theta_2$} (j2);
	\draw[->] (i) -- node[above left] {$\tau_1$} (j1);
	\draw[->] (i) -- node[above right] {$\tau_2$} (j2);
\end{tikzpicture}
\end{center}

Assuming $P_{m-1}$ holds means that one can compute $\tilde{\tau}_1$ and $\tilde{\tau}_2$ such that there exists an unknown automorphism $b$ of $\PP^{m-1}$, such that $\tilde{\tau}_k \circ b = \tau_k$.  

One can construct an automorphism $a$ of $\PP^m$, such that $b \circ \tau = \tau \circ a$. Therefore we have: $\theta_k \circ \pi_k = \tilde{\tau}_k \circ \tau \circ a$, or equivalently $\theta_k \circ \pi_k \circ a^{-1} = \tilde{\tau}_k \circ \tau$. We shall now show that one can compute $\hat{\pi}_k = \pi_k \circ a^{-1}$. Hence, we have $\theta_1 \circ \hat{\pi}_1 = \tilde{\tau}_1 \circ \tau$ and $\theta_2 \circ \hat{\pi}_2 = \tilde{\tau}_2 \circ \tau$.

In matrix form, given $(m-1) \times m$ matrices $\bf{\Theta}_1, \bf{\Theta}_2$, that represent $\theta_1$ and $\theta_2$, and given two $(m-1) \times (m+1)$ matrix $\bf{\Gamma}_1, \bf{\Gamma}_2$, that represent $\tilde{\tau}_1 \circ \tau$ and $\tilde{\tau}_2 \circ \tau$, we are looking for two $m \times (m+1)$ matrices $\bf{P}_1,\bf{P}_2$, such that $\bf{\Theta}_k \bf{P}_k = \bf{\Gamma}_k$. Indeed despite the fact that the equality is between projective operators, the matrix $\bf{P}_k$ can be normalized to get a full equality. 

Therefore for each $k$, we get generically $(m-1) \times (m+1)$ independent equations, while we have $m \times (m+1)$ unknowns. The set of solutions is then made of an affine space of dimension $m+1$ in $\C^{m \times (m+1)}$. Let ${\bf P}_{1,1},\cdots, {\bf P}_{1,m+1}$, ${\bf P}_{2,1},\cdots, {\bf P}_{2,m+1}$ be two sets of linearly independent matrices, such that: ${\bf P}_k = {\bf P}_{k,0} + \sum_{i=1}^{m+1} \alpha_{k,i} {\bf P}_{k,i}$, for some matrix ${\bf P}_{k,0}$ and some vector $(\alpha_{k,1}, \cdots, \alpha_{k,m+1}) \in \C^{m+1}$. 

The kernel of $\bf{\Theta}_k \bf{P}_k = \bf{\Gamma}_k$ is two-dimensional since the rank is $m-1$. The kernel is then a line in $\PP^m$ that contains the center of projection ${\bf O}_k$ of ${\bf P}_k$. 

Once the center of projection ${\bf O}_k$ of $\hat{\pi}_k$ (equivalently the kernel of ${\bf P}_k$) is known, we have a linear system, with unkowns $\alpha_{k,1}, \cdots, \alpha_{k,m+1}$, such that $\sum_{i=1}^{m+1} \alpha_{k,i} {\bf P}_{k,i} {\bf O}_k = - {\bf P}_{k,0} {\bf O}_k$. This implies ${\bf \alpha}_k = (\alpha_{k,1}, \cdots, \alpha_{k,m+1})$ lies on an affine line in $\C^{m+1}$: ${\bf \alpha}_k = {\bf w}_{k} + \lambda_k {\bf v}_k$. 

For generic projections $\theta_1$ and $\theta_2$, the hypotheses of theorem~\ref{thm::reconstruction} hold for $Y_1', Y_2'$ and $\tilde{\tau}_1, \tilde{\tau}_2$, so that one can compute the variety $X'' = b(X')$ from $Y'_1$, $Y'_2$, $\tilde{\tau}_1$ and $\tilde{\tau}_2$, since $\deg(X') > 2$. Therefore we have $X'' = b(X') = b \circ \tau (X) = \tau \circ a(X)$ and $\hat{\pi}_k(a(X)) = Y_k$. 

Let $\Delta_k$ be the cone in $\PP^m$ generated ${\bf O}_k$ and $Y_k$, and let $\Delta$ be the cone in $\PP^m$ generated by the center of projection ${\bf O}$ of $\tau$ and $b(X')$. Then theorem~\ref{thm::reconstruction} implies that generically the intersections $\Delta_1 \cap \Delta$, $\Delta_2 \cap \Delta$ and $\Delta_1 \cap \Delta_2$ have each two irreducible components of dimension at most $n$, one of them being $a(X)$. 

For any point ${\bf Q}$ in $\Delta$, one can find a variety linearly isomorphic to $a(X)$ that contains ${\bf Q}$. Then for each $k$, the scalar $\lambda_k$ is computed by requiring $P_k {\bf Q} \in Y_k$. 

This shows that once ${\bf O}_1$, ${\bf O}_2$ and ${\bf Q}$ are chosen, the matrices ${\bf P}_1$ and ${\bf P}_2$ can be computed. The two centers of projection are constrained to lie on the lines defined by the kernels of ${\bf \Gamma}_1$ and ${\bf \Gamma}_2$, and the point ${\bf Q}$ must be in $\Delta$. Choosing other points that satisfy these constraints is equivalent to multiply each ${\bf P}_k$ on the right by some matrix $A$ in $GL_{m+1}$. Put differently, we have shown that the projections $\pi_1$ and $\pi_2$ can be computed modulo the action $\varrho$ of $\PP GL_{m+1}$ on the pairs of projections.
\end{proof}

\begin{theorem}
\label{thm::dim(x)*small}
For $2n + 1 < m$ and for a generic configuration, one can compute the projections $\pi_1$ and $\pi_2$ modulo the action $\varrho$, provided that $c \geq \frac{1}{2}(m+2)(m+1)$ and $\deg(X) > 2$. 
\end{theorem}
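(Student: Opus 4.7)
The plan is to prove the theorem by upward induction on $m$, combining Theorem~\ref{thm::dim(x)*big} as the base case (which already settles $P_m$ whenever $2n+1\geq m$) with Theorem~\ref{thm::induction} as the inductive step. The target is exactly the assertion that $P_m$ holds whenever $2n+1<m$, under the hypotheses $c\geq\tfrac{1}{2}(m+2)(m+1)$ and $\deg(X)>2$, so an induction on $m$ is the natural framework.

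Concretely, given $m>2n+1$ satisfying the class and degree bounds, I would invoke Theorem~\ref{thm::induction} to reduce $P_m$ to $P_{m-1}$ applied to a generic auxiliary projection $X'=\tau(X)\subset\PP^{m-1}$. By section~\ref{sec::generic_projection}, $X'$ is smooth and irreducible (as $m>2n+1$), shares the same degree $\deg(X')=d>2$, and inherits the class $c(X')=c$ (since dually $Y^{*}$ is cut out by a generic hyperplane, hence of the same degree). Two subcases arise. If $m=2n+2$, then for $X'\subset\PP^{2n+1}$ we have $\dim(X')=n=\codim(X')-1$, so $P_{m-1}$ follows directly from Theorem~\ref{thm::dim(x)*big} once the threshold $c\geq\tfrac{1}{2}(m+1)m$ is checked; this is implied by the stronger original hypothesis. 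If $m>2n+2$, then $P_{m-1}$ is furnished by the inductive hypothesis (with the same class slot again implied). In either subcase, Theorem~\ref{thm::induction} then closes the induction step.

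The only technical point to verify carefully---and the nearest thing to an obstacle---is the clean propagation of numerical and genericity hypotheses along the induction. The class bound $\tfrac{1}{2}(M+2)(M+1)$ is monotone increasing in $M$, so the initial bound at level $m$ automatically dominates all intermediate thresholds at levels $m-1,m-2,\ldots,2n+1$. Generic projection preserves degree, irreducibility, and the non-deficiency of dual and chordal varieties discussed in section~\ref{sec::generic_projection}. Smoothness of the projected variety is preserved at every step, since at each descent we remain in the regime $M>2n+1$, including the final descent onto $\PP^{2n+1}$ where smoothness of $X'$ still holds by the criterion $m>2n+1$. Once these invariants are recorded, the theorem follows formally from the two previously proved results, with no new geometric content needed beyond bookkeeping.
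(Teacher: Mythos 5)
Your proposal is correct and follows exactly the paper's own argument: induction on the ambient dimension with Theorem~\ref{thm::dim(x)*big} supplying the base case $P_{2n+1}$ and Theorem~\ref{thm::induction} supplying the step $P_{M-1}\Rightarrow P_M$. The bookkeeping you add (monotonicity of the class bound in $M$, preservation of degree, class, and smoothness under generic projection when $M>2n+1$) is implicit in the paper's two-line proof and is a worthwhile clarification, but it is not a different route.
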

\begin{proof}
Since by theorem~\ref{thm::dim(x)*big}, the proposition $P_{2n+1}$ holds, then theorem~\ref{thm::induction} implies that $P_k$ holds for $2n+1 \leq k \leq m$.  
\end{proof}

\subsubsection{Conclusion} 

Eventually, we conclude this section by the following theorem.

\begin{theorem}[Main Theorem 1]
\label{thm::main_thm_1}
For a generic position of the centers of projection, the generalized Kruppa
equation defines the epipolar geometry (that is the fundamental matrices) up to a finite-fold ambiguity
provided the class $c$ of the irreducible smooth projective variety $X$ satisfies $c \geq \frac{1}{2}(m+2)(m+1)$ and $\deg(X) > 2$. 
\end{theorem}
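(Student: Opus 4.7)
The plan is to combine the two case theorems established earlier (Theorems~\ref{thm::dim(x)*big} and~\ref{thm::dim(x)*small}) and to explain how discreteness of the variety $V$ defined by the generalized Kruppa equation translates into a finite-fold ambiguity for the entire epipolar geometry via the matrix ${\bf F}_2$. The backbone of the argument is a case split on the two regimes $\dim(X) \geq \codim(X) - 1$ and $\dim(X) < \codim(X) - 1$, which exhaust all possibilities since $n \leq m-2$ is the only standing assumption on dimensions.

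First, assume $\dim(X) \geq \codim(X) - 1$, i.e.\ $2n+1 \geq m$. Under the hypothesis $c \geq \frac{1}{2}(m+2)(m+1)$, Theorem~\ref{thm::dim(x)*big} directly asserts that $V$ is discrete for generic projections. The role of the extra assumption $\deg(X) > 2$ is not needed in this regime, so the conclusion is immediate. I would simply invoke this theorem, noting that by Proposition~\ref{prop::valid_solutions} every solution is admissible so the finite-fold ambiguity in $V$ is an ambiguity on genuine fundamental matrices of order two, not spurious algebraic ones; the hypothesis $c \geq m-1$ required there is dominated by $c \geq \frac{1}{2}(m+2)(m+1)$ for $m \geq 2$, so the admissibility hypothesis is automatic.

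Second, assume $\dim(X) < \codim(X) - 1$, i.e.\ $2n+1 < m$. This regime is exactly the proposition $P_m$ introduced before Theorem~\ref{thm::induction}. The base case $P_{2n+1}$ is the previous regime, already handled. Theorem~\ref{thm::induction} provides the inductive step $P_{m-1} \Rightarrow P_m$ whenever $2n+1 < m$, so a finite descending induction from $m$ down to the base case $2n+1$ yields $P_m$ provided $\deg(X) > 2$ (this is the point where the second genericity assumption on the degree is used, in order to invoke Theorem~\ref{thm::reconstruction} inside the inductive step). The key subtlety to mention is that throughout the induction the bound $c \geq \frac{1}{2}(m+2)(m+1)$ is used in the largest dimension $m$ only; intermediate projections to $\PP^{m-1}$ preserve the class of $X$ by the genericity discussion in Section~\ref{sec::generic_projection}, so the class bound propagates all the way down.

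Finally, it remains to convert a finite-fold ambiguity on ${\bf F}_2$ into a finite-fold ambiguity on the full epipolar geometry. This is immediate from Lemma~\ref{lemma::polynomial_relations_F}, which gives ${\bf F}_l = \phi_{2l}({\bf F}_2)$ for every $l$, together with Theorem~\ref{thm::FundEquiv}, which shows that the knowledge of any fundamental matrix determines the class of $(\pi_1, \pi_2)$ modulo the action $\varrho$. The main obstacle in this whole plan is not in the final assembly but in the sufficient-condition step that sits inside Theorem~\ref{thm::dim(x)*big} (the quadric argument bounding the dimension of $\tau_1(\tau_2^{-1}(S))$), and in verifying that the inductive construction of Theorem~\ref{thm::induction} really produces generic auxiliary projections $\tau, \theta_1, \theta_2$ for which all genericity assumptions on $X', Y_1', Y_2'$ continue to hold; both are handled in the respective theorems, so the present theorem reduces to bookkeeping of the hypotheses and a two-line assembly.
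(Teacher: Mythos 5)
Your proposal is correct and follows exactly the paper's own route: the paper's proof of this theorem is a one-line assembly citing Theorems~\ref{thm::dim(x)*big} and~\ref{thm::dim(x)*small}, which is precisely your case split on $2n+1 \geq m$ versus $2n+1 < m$. The additional remarks you make (admissibility via Proposition~\ref{prop::valid_solutions}, propagation of the class bound through the induction, and passing from ${\bf F}_2$ to the full epipolar geometry via Lemma~\ref{lemma::polynomial_relations_F} and Theorem~\ref{thm::FundEquiv}) are consistent with how the paper uses those results elsewhere.
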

\begin{proof}
This follows from theorem~\ref{thm::dim(x)*big} and~\ref{thm::dim(x)*small}.
\end{proof}

Fixing $\pi_1$ and $\pi_2$, since different varieties in generic position give rise to independent equations, this result means that the sum of the classes of the (projected) varieties must satisfy the condition of the theorem for $V$ to be a finite set.

%===============================
\subsection{Recovering The Variety}
\label{sec::recovering_the_variety}

Let the projection matrices be ${\bf M}_1$ and ${\bf
M}_2$. Let us consider the cone $\Delta_i$ defined by the projected variety $Y_i$ and the center of projection $O_i$. It is defined by the following set of equations $\{f({\bf M}_i {\bf P}) = 0 | {\bf P} \in \PP^m, f \in I(Y_i)\}$, where $I(Y_1)$ is the homogeneous ideal defining $Y_i$. 

The reconstruction is defined as the recovery of $X$ in the variety $\Delta_1 \cap \Delta_2$. This variety has two irreducible components as the following theorem states. 

\begin{theorem}[Main Theorem 2]
\label{thm::reconstruction}
For a generic position of the centers of projection, namely when no epipolar $(m-n)-$plane is tangent twice to the variety $X$, the variety defined by $\Delta_1 \cap \Delta_2$ has two irreducible components. One has degree $d$ and is the actual solution of the reconstruction. The other one has degree $d(d-1)$. 
\end{theorem}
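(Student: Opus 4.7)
The plan is to combine a Bezout-style degree count with a monodromy argument on the family of epipolar planes through the baseline $L = \overline{{\bf O}_1 {\bf O}_2}$. First I would record that $\Delta_i = \pi_i^{-1}(Y_i)$ is the cone over $Y_i$ with vertex ${\bf O}_i$, so $\dim \Delta_i = n+1$ and, via the ruling by lines through ${\bf O}_i$, $\deg \Delta_i = \deg Y_i = d$. The inclusion $X \subset \Delta_1 \cap \Delta_2$ is immediate from $\pi_i(X) = Y_i$. In the essential case $m = n+2$, the two cones meet in the expected codimension, giving a proper intersection of pure dimension $n$; Bezout then yields $\deg(\Delta_1 \cap \Delta_2) = d^2$. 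The hypothesis that no epipolar $(m-n)$-plane is tangent to $X$ at two distinct points ensures that the intersection is reduced along $X$, so that $X$ appears with multiplicity one, and the complementary cycle $R$ has degree $d^2 - d = d(d-1)$.

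Next I would identify $R$ geometrically. A point $p \in \Delta_1 \cap \Delta_2 \setminus X$ sits on two lines ${\bf O}_1 q_1$ and ${\bf O}_2 q_2$ with $q_1, q_2 \in X$ and $q_1 \neq q_2$; for these lines to meet at $p$ they must span a $2$-plane, which forces $q_1, q_2$ to lie in a common epipolar $2$-plane through $L$. This sets up a birational correspondence
\[
E = \{(q_1, q_2) \in X \times X : q_1 \neq q_2,\; q_2 \in \langle L, q_1 \rangle\} \;\dashrightarrow\; R,\qquad (q_1, q_2) \longmapsto \overline{{\bf O}_1 q_1} \cap \overline{{\bf O}_2 q_2},
\]
whose inverse recovers $q_i$ as the unique intersection of $\overline{{\bf O}_i p}$ with $X$; this uses that for generic projections each line through ${\bf O}_i$ meeting $X$ does so in a single point, and the bitangency hypothesis rules out ramification at generic fibers.

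It remains to show that $E$ (equivalently $R$) is irreducible, and this is the main obstacle. Let $\phi: X \to \PP^{m-2}$ denote the linear projection from the baseline $L$; it is a finite morphism of degree $d$ for generic projections. Then $E$ is the complement of the diagonal in the fibered product $X \times_\phi X$. By the uniform position principle of Castelnuovo and Harris, the monodromy (Galois) group of $\phi$ on a generic fiber is the full symmetric group $S_d$. Since $S_d$ is doubly transitive on ordered pairs of distinct elements, $X \times_\phi X \setminus \Delta_X$ is irreducible, and hence so is its birational image $R \subset \PP^m$. Combined with the degree count, this completes the theorem.

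The delicate point is justifying the $S_d$ monodromy when the center of projection is the $1$-dimensional baseline $L$: the standard statement of uniform position concerns projections from a generic linear center of the appropriate complementary dimension, so an extra argument is needed — either a direct invocation of Harris's monodromy theorem applied to a suitable dominant morphism built from the family of epipolar planes, or a reduction through composition with a further generic projection to $\PP^{n}$ where the classical statement applies verbatim. A secondary subtlety, also handled by the bitangency hypothesis, is checking that the scheme-theoretic intersection $\Delta_1 \cap \Delta_2$ is generically reduced along both components, so that the Bezout degrees translate into the asserted degrees of the reduced irreducible components.
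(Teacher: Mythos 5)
Your overall strategy---reduce the question to a fibered product over a projective base, show that the relevant monodromy group is the full symmetric group, and use its $2$-transitivity to conclude that the off-diagonal part is a single irreducible component of degree $d^2-d$---is the same as the paper's, and your argument is essentially complete in the case $m=n+2$, where $\Delta_1\cap\Delta_2$ is a proper complete intersection and projection from the baseline $L$ is a finite degree-$d$ cover of $\PP^{m-2}=\PP^{n}$. The genuine gap is that the theorem is asserted for all $n\le m-2$, and for $n<m-2$ two of your key steps fail rather than merely needing polish. First, the cones $\Delta_i$ have dimension $n+1$, so $\Delta_1\cap\Delta_2$ is an improper intersection (it contains the $n$-dimensional $X$ while the expected dimension is $2n+2-m<n$), and Bezout gives neither the total degree $d^2$ nor the degree of the residual component. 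Second, for a generic line $L$ with $n<m-2$ the locus of pairs of points of $X$ on a common secant meeting $L$ has dimension $2n-(m-2)<n$, so the projection $\phi\colon X\to\PP^{m-2}$ from $L$ is birational onto an $n$-dimensional image rather than $d$-to-one onto $\PP^{m-2}$; there is then no degree-$d$ cover $X\times_\phi X$ to which uniform position could be applied. The ``reduction through a further generic projection to $\PP^n$'' that you defer to a single sentence is precisely where the work of the proof lies.

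The paper carries out that reduction explicitly: it joins $L$ with a generic $(m-n-3)$-plane $W$ to form a space $Z$ of dimension $m-n-1$ disjoint from $X$, so that the pencil of $(m-n)$-planes through $Z$ induces degree-$d$ coverings $\eta\colon X\to\PP^n$ and $\eta_i\colon Y_i\to\PP^n$ compatible with the projections; it identifies $\Delta_1\cap\Delta_2$ with $Y_1\times_{\PP^n}Y_2$; and it uses exactly the stated hypothesis (no epipolar $(m-n)$-plane bitangent to $X$) to check that the branch locus is a hypersurface of $\PP^n$ and that the branch points of $\eta$ are simple, so that the monodromy is transitive (irreducibility of $X$) and contains transpositions, hence equals $\Perm(X_t)$. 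The two orbits of this action on $X_t\times X_t$ then yield the two components together with their degrees $d$ and $d(d-1)$ directly, with no separate reducedness discussion of the kind you append at the end. Note also that the paper does not cite the uniform position principle as a black box---which, as you yourself observe, is not stated for projection from a baseline determined by the two centers---but proves the symmetric-group statement from scratch; to keep your shorter route you would either have to supply that argument or restrict the theorem to $m=n+2$.
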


\begin{proof}
%For a line ${\bf L} \subset \PP^{m}$, we write $\sigma({\bf L})$ for the
%pencil of $(m-n)-$planes containing ${\bf L}$.  For a point ${\bf p} \in \PP^{m-1}$,
%we write $\sigma({\bf p})$ for the pencil of $(m-1-n)-$planes through ${\bf p}$. There is a natural isomorphism between $\sigma({\bf e}_i)$, that is the space of the epipolar $(m-1-n)-$planes in projection $i$, and $\sigma({\bf L})$, the $(m-n)-$planes containing both centers of projections.  

Let $W$ be a generic $(m-n-3)-$plane in $\PP^{m-1}$. Recall that $n \leq m-2$. So we define $W = \emptyset$ if $m-n-3 < 0$, that is if $n=m-2$. Let $W_j = \pi_j(W)$ be the projection of $W$ on $i_j(\PP^{m-1})$. Let $Z$ be the geometric join of $W$ and $L$, that is:
$$
Z = \{x \in \PP^{m} | \exists l \in \G(1,m), x \in l, l \cap W \neq \emptyset, l \cap L \neq \emptyset\}.
$$
Similarly let $Z_i$ be the geometric join of $W_i$ and ${\bf e}_i$. If $n = m-2$, we have $Z = L$ and $Z_i = {\bf e}_i$. By genericity we have: $\dim(Z) = m-n-1$, $\dim(Z_i) = m-n-2$, $Z \cap X = \emptyset$ and $Z_i \cap Y_i = \emptyset$.   

Then let us consider the pencils $\sigma(Z)$, $\sigma(Z_i)$, which are defined respectively as the set of $(m-n)-$planes in $\PP^m$ containing $Z$ and the set of $(m-n-1)-$planes in $\PP^{m-1}$ containing $Z_i$. They are all canonically isomorphic to $\PP^n$. 

Consider the following coverings of $\PP^n$: 
\begin{enumerate}
\item $X \stackrel{\eta}{\longrightarrow} \sigma(Z) \cong
\PP^n$, taking a point $x \in X$ to the epipolar $(m-n)-$plane that it defines with the space $Z$. 
\item $Y_1 \stackrel{\eta_1}{\longrightarrow} \sigma(Z_1) \cong
\sigma(Z) \cong \PP^n$, taking a point $y \in Y_1$ to the
epipolar $(m-n-1)-$plane that it defines with the space $Z_1$ in the first projection hyperplane. 
\item $Y_2 \stackrel{\eta_2}{\longrightarrow} \sigma(Z_2) \cong
\sigma(Z) \cong \PP^n$, taking a point $y \in Y_2$ to the
epipolar $(m-n-1)-$plane that it defines with the space $Z_2$ in the second projection plane. 
\end{enumerate}

The generic fibers of $\eta,\eta_1,\eta_2$ are all finite of cardinal $d$ (counting with multiplicities), the degree of $X$. There are branch points as follows:
\begin{enumerate}
\item the branch points of $\eta$ are made of $(m-n)-$planes in $\sigma(Z)$ that are tangent to $X$,
\item the branch points of $\eta_i$ are made of $(m-n-1)-$planes in $\sigma(Z_i)$ that are tangent to $Y_i$ or that contain singular points of $Y_i$.
\end{enumerate}

If $\rho_i$ is the projection $X \to Y_i$, then
$\eta=\eta_i\rho_i$. Let ${\cal B}$ be the union set of branch loci of
$\eta_1$ and $\eta_2$.  It is clear that the branch locus of $\eta$
is included in ${\cal B}$.  

Now we shall prove that under the genericity condition of the theorem, the set $\mathcal{B}$ is an algebraic hypersurface of $\PP^n$. 

Indeed over a generic point $W$ in $\sigma(Z) \iso \PP^n$, the fiber of $\eta$ is a set of $d = \deg(X)$ distinct points: $\{x_1, \cdots, x_d\}$. These points are given by $d$ local morphisms, $\phi_1, \cdots, \phi_d$ from $\PP^n$ to $X$. We have the following characterization of $\mathcal{B}$:
$$
W \in \mathcal{B} \Leftrightarrow \exists i,j, i \neq j, \phi_i(W) = \phi_j(W).
$$

Since by the genericity assumption a $(m-n)-$plane tangent to $X$ cannot be tangent twice, there is no triple of distinct indexes $i,j,k$ such that $\phi_i(W) = \phi_j(W) = \phi_k(W)$. Therefore the codimension of $\mathcal{B}$ is indeed $1$. 

Let $S=\PP^n \setminus {\cal B}$, pick $t
\in S$, and write $X_S=\eta^{-1}(S)$, $X_t=\eta^{-1}(t)$.  Let
$\mu_{X_S}$ be the monodromy: $\pi_1(S,t) \longrightarrow \Perm(X_t)$, where $\Perm(A)$ is the group of permutation of a finite set $A$. It is well known that the path-connected components of $X$ are in one-to-one correspondence with the orbits of the action of $\im(\mu_{X_S})$ on $X_t$.  Since $X$ is assumed to be irreducible, it has only one component and $\im(\mu_{X_S})$ acts transitively on $X_t$. Then it is enough to prove that $\im(\mu_{X_S})$ contains a transposition to conclude that $\im(\mu_{X_S}) = \Perm(X_t)$. 

In order to show that $\im(\mu_{X_S})$ actually contains a transposition, consider a
loop in $\PP^n$ based at $t$, say $l_t$.  If $l_t$ does not go round
any branch point, then $l_t$ is homotopic to the constant path in $S$
and then $\mu_{X_S}([l_t])=1$.  Now as mentioned above, there are two
types of branch points in ${\cal B}$: 
\begin{enumerate}
\item branch points that come from singularities of $Y_1$ or $Y_2$: these are not
branch points of $\eta$, 
\item branch points that come from epipolar $(m-1-n)-$planes tangent either to $Y_1$ or to $Y_2$: these are branch points of $\eta$. 
\end{enumerate}   

If the loop $l_t$ goes round a point of the first types, then it is still true that $\mu_{X_S}([l_t])=1$.  Now suppose that $l_t$ goes round a genuine branch point of $\eta$, say $b$ (and goes round no other points in $\cal B$). Such a loop indeed exists since the codimension of $\mathcal{B}$ is 1. 

By genericity, $b$ is a simple two-fold branch point, hence $\mu_{X_S}([l_t])$ is a transposition. This follows from the fact that after choosing adapted local charts, the local expression of $\eta$ would be given by $w_1=z_1^2,w_2=z_2, \cdots, w_n=z_n$. 

This shows that $\im(\mu_{X_S})$ actually contains at least one transposition and since $\mu_{X_S}$ acts transitively on $X_t$, it contains all transpositions and so $\im(\mu_{X_S})=\Perm(X_t)$. 

Now consider $\tilde{X} = \Delta_1 \cap \Delta_2$, the variety defined as the intersection of the two cones defined by the projected varieties $Y_1, Y_2$ and the center of projections $O_1, O_2$.  By Bezout's Theorem $\tilde{X}$ has
degree $d^2$.  Let $\tilde{x} \in \tilde{X}$.  It is projected onto a
point $y_i$ in $Y_i$, such that $\eta_1(y_1)=\eta_2(y_2)$.  Hence
$\tilde{X} \cong Y_1 \times_{\PP^n} Y_2$; restricting to the inverse
image of the set $S$, we have $\tilde{X}_S \cong X_S \times_S X_S$. We
can therefore identify $\tilde{X}_t$ with $X_t \times X_t$.  The
monodromy $\mu_{\tilde{X}_S}$ can then be given by
$\mu_{\tilde{X}_S}(x,y)=(\mu_{X_S}(x),\mu_{X_S}(y))$.  Since
$\im(\mu_{X_S})=\Perm(X_t)$, the action of $\im(\mu_{\tilde{X}_S})$ on
$X_t \times X_t$ has two orbits, namely $\{(x,x)\} \cong X_t$ and
$\{(x,y) | x \neq y\}$.  Hence $\tilde{X}$ has two irreducible
components.  One has degree $d$ and is $X$, the other has degree
$d^2-d=d(d-1)$. 
\end{proof} 
  
This result provides a way to find the right solution for the
recovery in a generic configuration, except in the case the degree of $X$ is $2$, where the two components of the intersection are both admissible. 

\medskip
{\bf Acknowledgment - } I would like to particularly thank Prof. Pierre Lochak from Institut Math\'ematique de Jussieu, Sorbonne Université, for having read a preliminary version of this article and shared with me his comments which greatly allowed me to improve this manuscript.

%========================
%========================

\end{document}